\documentclass[11pt]{amsart}

%
%

\usepackage[top=25truemm,bottom=25truemm,left=25truemm,right=25truemm]{geometry}

\usepackage{amssymb}
\usepackage{amsfonts}
\usepackage{mathrsfs}
\usepackage[all]{xy}
\usepackage{bm}
\usepackage{amsmath}
\usepackage{stmaryrd}
\usepackage{url}


%
%
%
%

\if@nosecthm
    \newtheorem{theorem}{Theorem}
\else
    \newtheorem{theorem}{Theorem}[section]
\fi

    \newtheorem{corollary}[theorem]{Corollary}
    
    \newtheorem{definition}[theorem]{Definition}
    \newtheorem{example}[theorem]{Example}
    \newtheorem{lemma}[theorem]{Lemma}

    \newtheorem{proposition}[theorem]{Proposition}
    \newtheorem{question}[theorem]{Question}
    \newtheorem{remark}[theorem]{Remark}


\newcommand{\DS}{\displaystyle}

\newcommand{\SC}{\scriptstyle}
\newcommand{\SSC}{\scriptscriptstyle}




\DeclareMathOperator{\Cok}{Coker}
\DeclareMathOperator{\End}{End}

\DeclareMathOperator{\Frob}{Frob}
\DeclareMathOperator{\Gal}{Gal}
\DeclareMathOperator{\GL}{GL}
\DeclareMathOperator{\Gr}{Gr}
\DeclareMathOperator{\Hom}{Hom}
\DeclareMathOperator{\HP}{\mathcal{HP}}
\DeclareMathOperator{\id}{id}

\DeclareMathOperator{\Ker}{Ker}

\DeclareMathOperator{\Lie}{Lie}


\DeclareMathOperator{\ord}{ord}

\DeclareMathOperator{\rk}{rk}
\DeclareMathOperator{\Spec}{Spec}
\DeclareMathOperator{\Spf}{Spf}
\DeclareMathOperator{\Sp}{Sp}

\DeclareMathOperator{\TI}{\mathcal{TI}}
\DeclareMathOperator{\Tor}{Tor}
\DeclareMathOperator{\Tr}{tr}
\DeclareMathOperator{\W}{\mathcal{W}}
\DeclareMathOperator{\Wt}{wt}

\newcommand{\ra}{\rightarrow}
\newcommand{\rai}{\overset{\sim}{\ra}}

\newcommand{\hra}{\hookrightarrow}
\newcommand{\thra}{\twoheadrightarrow}
\newcommand{\lra}{\longrightarrow}

\newcommand{\lrai}{\overset{\sim}{\lra}}
\newcommand{\Lra}{\Longrightarrow}

                              
                              
\newcommand{\dpl}{{\mathchoice{\mbox{\rm (\hspace{-0.15em}(}}
                              {\mbox{\rm (\hspace{-0.15em}(}}
                              {\mbox{\scriptsize\rm (\hspace{-0.15em}(}}
                              {\mbox{\tiny\rm (\hspace{-0.15em}(}}}}
\newcommand{\dpr}{{\mathchoice{\mbox{\rm )\hspace{-0.15em})}}
                              {\mbox{\rm )\hspace{-0.15em})}}
                              {\mbox{\scriptsize\rm )\hspace{-0.15em})}}
                              {\mbox{\tiny\rm )\hspace{-0.15em})}}}}


\newcommand{\an}{{\rm an}}
\newcommand{\dR}{{\rm dR}}

\newcommand{\rig}{{\rm rig}}
\newcommand{\sep}{{\rm sep}}
\renewcommand{\ss}{{\rm ss}}
\newcommand{\tame}{{\rm tame}}

\newcommand{\ur}{{\rm ur}}
\newcommand{\wild}{{\rm wild}}

%
%
%

\newcommand{\zb}{\llbracket z \rrbracket	}     
\newcommand{\zp}{\dpl z \dpr}     
\newcommand{\zf}{\frac{1}{z}}       

\newcommand{\zz}{z-\zeta}          
\newcommand{\zzb}{\llbracket z-\zeta \rrbracket	}  
\newcommand{\zzp}{\dpl z-\zeta \dpr}
\newcommand{\zzf}{\frac{1}{z-\zeta}}

\newcommand{\izb}{\llbracket z_\infty \rrbracket	}     
\newcommand{\izp}{\dpl z_\infty \dpr}     


\newcommand{\sig}{\sigma}
\renewcommand{\phi}{\varphi}
\newcommand{\lam}{\lambda}

\newcommand{\hsig}{\wh{\sig}}


\renewcommand{\ast}{^{\SC*}}
\newcommand{\wc}{\check}
\newcommand{\es}{\enspace}

\newcommand{\mal}{^{\SSC\times}}
\newcommand{\inv}{^{\SSC-1}}

\newcommand{\col}{\colon}
\newcommand{\ul}[1]{{\underline{#1\hspace{-0.18em}}\hspace{0.18em}}}
\newcommand{\ol}[1]{{\overline{#1}}}
\newcommand{\wh}[1]{{\hat{#1}}}
\newcommand{\wt}[1]{{\tilde{#1}}}


\newcommand{\ot}{\otimes}
\newcommand{\op}{\oplus}

\usepackage{ifthen}

\newcommand{\ilim}[1][]{\ifthenelse{\equal{#1}{}}
{\DS \lim_{\longleftarrow}}
{\DS \lim_{\underset{#1}{\longleftarrow}}}
}

\newcommand{\dlim}[1][]{\ifthenelse{\equal{#1}{}}
{\DS \lim_{\longrightarrow}}
{\DS \lim_{\underset{#1}{\longrightarrow}}}
}


\makeatletter 
\@tempcnta\z@
\loop\ifnum\@tempcnta<26
\advance\@tempcnta\@ne
\expandafter\edef\csname rm\@Alph\@tempcnta\endcsname{\noexpand\mathrm{\@Alph\@tempcnta}}
\expandafter\edef\csname s\@Alph\@tempcnta\endcsname{\noexpand\mathscr{\@Alph\@tempcnta}}
\expandafter\edef\csname b\@Alph\@tempcnta\endcsname{\noexpand\mathbb{\@Alph\@tempcnta}}
\expandafter\edef\csname c\@Alph\@tempcnta\endcsname{\noexpand\mathcal{\@Alph\@tempcnta}}
\expandafter\edef\csname rm\@alph\@tempcnta\endcsname{\noexpand\mathrm{\@alph\@tempcnta}}
\expandafter\edef\csname fr\@alph\@tempcnta\endcsname{\noexpand\mathfrak{\@alph\@tempcnta}}
\expandafter\edef\csname fr\@Alph\@tempcnta\endcsname{\noexpand\mathfrak{\@Alph\@tempcnta}}
\expandafter\edef\csname bs\@Alph\@tempcnta\endcsname{\noexpand\boldsymbol{\@Alph\@tempcnta}}
\expandafter\edef\csname bs\@alph\@tempcnta\endcsname{\noexpand\boldsymbol{\@alph\@tempcnta}}
\expandafter\edef\csname bf\@Alph\@tempcnta\endcsname{\noexpand\mathbf{\@Alph\@tempcnta}}
\expandafter\edef\csname bf\@alph\@tempcnta\endcsname{\noexpand\mathbf{\@alph\@tempcnta}}
\repeat


 \makeatletter
           
    \@addtoreset{equation}{section}
  \makeatother




\begin{document}


\author{Yoshiaki Okumura}
\title{Congruences of Galois representations attached to effective $A$-motives over global function fields}
\date{}

\maketitle
\thispagestyle{empty}

\begin{abstract}
This article investigates congruences of $\frp$-adic representations arising from effective $A$-motives defined over a global function field $K$. 
We give a criterion for two congruent $\frp$-adic representations coming from strongly semi-stable effective  $A$-motives to be isomorphic up to semi-simplification when restricted to decomposition groups of suitable places of $K$.
This is a function field analog of Ozeki-Taguchi's criterion for $\ell$-adic representations of number fields.
Motivated by a non-existence conjecture on abelian varieties over number fields stated by  Rasmussen and Tamagawa, we also show that there exist no strongly semi-stable effective $A$-motives with some constrained.
\end{abstract}


\pagestyle{myheadings}
\markboth{Y.\ Okumura}{Congruent Galois representations of $A$-motives}

\renewcommand{\thefootnote}{\fnsymbol{footnote}}
\footnote[0]{2020 Mathematics Subject Classification:\ Primary 11G09, 11F80;\ Secondary 13A35, 11R58.}
\renewcommand{\thefootnote}{\arabic{footnote}}
\renewcommand{\thefootnote}{\fnsymbol{footnote}}
\footnote[0]{Keywords:\ effective $A$-motives, Galois representations, Local shtukas.}
\renewcommand{\thefootnote}{\arabic{footnote}}

\setcounter{tocdepth}{1}
\tableofcontents  





%
%
\section{Introduction}\label{section.intro}

\subsection{Motivation and results}\label{subsec.motivation}
Let $Q$ be a global function field whose constant field is a finite field $\bF_q$ with $q$-elements of characteristic $p$.
We fix a place  $\infty$ of $Q$ and suppose that its residue field $\bF_\infty$ is equal to $\bF_q$.
Define $A\subset Q$
to be the ring of rational functions regular outside $\infty$.
Let $K$ be a global function field equipped with an injective $\bF_q$-algebra homomorphism $\gamma \colon A\ra K$.
Then $\gamma$ canonically extends to $\gamma\col Q \ra K$, and $K$ is a finite extension of $\gamma(Q)$.
Denote by $G_K:=\Gal(K^\sep/K)$  the absolute Galois group of $K$.
For a place $v$ of $K$, we denote by $G_v$ the decomposition group of $G_K$ at $v$.

Let $\frp \subset A$ be a maximal ideal and set $\bF_{\frp}:=A/\frp$.
 Let $Q_\frp$ be the $\frp$-adic completion of $Q$ and let $A_\frp$ be its valuation ring. 
 In this paper,  we mean by  {\it $\frp$-adic representations} of $G_K$ that finite-dimensional $Q_\frp$-vector spaces $V$ on which $G_K$ acts continuously.
We write $V_v=V\mid_{G_v}$ for the restriction to $G_v$.   
Now
let $V$ and $V'$ be two $\frp$-adic representations of $G_K$ with the same dimension and let $v$ be 
 a finite place (i.e., a place not lying above $\infty$) of $K$. 
We write 
\[
V_v \simeq_\ss V_v'
\]
 if their semi-simplifications are isomorphic as  $\frp$-adic representations of $G_v$.
Since $V_v$ and $V_v'$ have many common properties when $V_v \simeq_\ss V_v'$, it is worth considering when two $\frp$-adic representations are
isomorphic up to semi-simplification.
 To do this, we focus on the case where $V_v$ and $V_v'$ are {\it congruent modulo $\frp$} in the following sense:
For a choice of  $G_v$-stable $A_\frp$-lattices $T \subset V_v$ and $T' \subset V_v'$,  we write 
\[
V_v \equiv_\ss V_v' \pmod \frp
\]
if $T/\frp T$ and $T'/\frp T'$ have isomorphic semi-simplifications $(T/\frp T)^\ss$ and $(T'/\frp T')^\ss$ as  $\bF_\frp$-representations of $G_v$.
As we will see in Remark \ref{rem.sscong}, two $\frp$-adic representations isomorphic up to semi-simplification are congruent modulo $\frp$, and so  
we are interested in the converse:
\begin{question}\label{question}
When does the implication
\[
V_v \equiv_\ss V_v' \pmod \frp \es\es \Lra  \es\es V_v \simeq_\ss V_v'
\]
hold ?
\end{question}

In number field case, Ozeki and  Taguchi \cite{OT14} prove a criterion for two $\ell$-adic Galois representations of the absolute Galois group $G_F$ of a number field $F$ as the following type: {\it Let $v$ be a finite place of $F$. If the prime $\ell$ is sufficiently large $($with respect to $F$, $v$, and the type of the $\ell$-adic representations under considerations$)$, then for any place $u$ lying above $\ell$ and any $\ell$-adic representations $V$ and $V'$ of $G_F$ satisfying a certain set of conditions $($which in particular contains semi-stability at $v$ and $u)$, the two congruence relations $V_v \equiv_\ss V_v' \pmod \ell $ and $V_u \equiv_\ss V_u' \pmod \ell$ imply $V_v \simeq_\ss V_v'$.}
The method of the proof is, as in \cite[Lemma 3.9]{Oze11} and \cite[\S3.3]{RT17}, to recover the Frobenius characteristic polynomials of $V_v$ and $V_v'$ at $v$ from their reduction modulo $\ell$ by computing a  bound of the coefficients of them, which follows from the facts in $p$-adic Hodge theory due to Caruso \cite{Car06} and Caruso-Savitt \cite{CS09} that describe relations between Hodge-Tate weights and {\it tame inertia weights} of torsion semi-stable $p$-adic representations.
Here tame inertia weights are numerical invariants determined by inertia action on the semi-simplifications of residual representations.

Question \ref{question} is meaningful from the point of view of Serre's uniformity problem. 
It is proved by Serre \cite{Ser72} that for any elliptic curve $E$ over $\bQ$ without CM, there exists a constant $C(E)>0$ such that for any prime number $p>C(E)$, the residue representation 
\[
\rho_{E,p}^{}\colon G_\bQ\ra \GL_{\bF_p}(E[p]) \simeq \GL_2(\bF_p)
\] 
is surjective.
In contrast, Serre asked whether $C(E)$ can be made independent of $E$: 
{\it Does there exist $C>0$ such that for any non-CM elliptic curve $E$ over $\bQ$ and any prime number $p>C$, the residue representation $\rho_{E, p}^{}$ is surjective ?}\ 
Replacing the surjectivity condition with another, one can get a variant of Serre's uniformity problem.
For example, as we will see below, a conjecture of Rasmussen and Tamagawa \cite{RT08} is a kind of this problem.
In a function field setting, Pink and R\"utsche \cite{PR09} show a Drinfeld module analog of Serre's result. 
Thus one can consider the uniformity problem for Drinfeld modules.
To solve these problems, we can expect that it is useful to consider the congruences of Galois representations. 
For instance, under the assumption of the Generalized Riemann Hypothesis, 
the Rasmussen-Tamagawa conjecture is proven in \cite{RT17} by showing the implication as in Question \ref{question} for $\ell$-adic  representations coming from abelian varieties.
In this context, we can interpret the work of Ozeki and Taguchi \cite{OT14} as giving a general framework for studying Serre's uniformity problem in number field case, and so it is natural to be interested in function field case.

The purpose of this article is to study the congruence relation between two $\frp$-adic Galois representations arising from {\it effective $A$-motives} over $K$. 
To be more precise, set  $A_K:=A\ot_{\bF_q}K$ and consider the endomorphism 
$\sig$ of $A_K$ given by $\sig(a\ot \lam)=a\ot \lam^q$ for $a \in A$  and $\lam \in K$.
Let  $J=J_K \subset A_K$  be the ideal generated by $\{a\ot 1-1\ot \gamma(a) \mid a \in A \}$, so that it is the kernel of the map $A_K \ra K $ with $ a\ot \lam \mapsto \gamma(a)\lam$. 
An {\it effective $A$-motive over $K$} is  a pair $\ul{M}=(M,\tau_M)$ consisting of a locally free $A_K$-module $M$ of finite rank and an injective  
$A_K$-homomorphism
$\tau_M \colon \sig \ast M:=M\ot_{A_K, \sig}A_K \ra M$ whose cokernel  is a finite-dimensional $K$-vector space annihilated by a power of $J$.
The  {\it rank} and {\it dimension} of $\ul M$  are defined by   $\rk \ul M:=\rk_{A_K}M$ and  
$\dim \ul M:=\dim_K \Cok (\tau_M)$, and $\ul M$ is said to be {\it of height $\leq h$}  if $J^h\cdot \Cok(\tau_{M})=0$.  
The notion of effective $A$-motives is a variant of {\it $t$-motives} in the sense of  Anderson \cite{And86} and it is also a generalization of Drinfeld $A$-modules (cf.\ \cite{Dri74}, \cite{DH87}, and \cite{Hay92})
and abelian Anderson $A$-modules (cf.\ \cite{And86}, \cite{Gos96}, and \cite{Har19}), which are a function field analog of abelian varieties over number fields.
In this article, we consider two kinds of ``cohomology realizations'' of $\ul M$; the first is the {\it $\frp$-adic realizations}; the second is the {\it de Rham realizations}. 
The $\frp$-adic realization $H^1_\frp(\ul M,Q_\frp)$ of $\ul M$ is a $\frp$-adic representation of $G_K$ with dimension equal to $\rk \ul M$.
Taking the $Q_\frp$-linear dual of $H^1_\frp(\ul M, Q_\frp)$, we get the $\frp$-adic rational Tate module $V_\frp\ul M$, which is the central consideration of our study.
The de Rham realization $H^1_\dR(\ul M, K)$ of $\ul M$ is a finite-dimensional $K$-vector space equipped with a descending filtration by $K$-subspaces so-called the {\it Hodge-Pink filtration}.
As jumps of this filtration, we define the multi-set $\HP(\ul M)$ of {\it Hodge-Pink weights} of $\ul M$, which play the role of Hodge-Tate weights.
We should note that we adopt the negatives of usual Hodge-Pink weights as the definition of Hodge-Pink weights; see Remark \ref{rem.HPneg}.

In order to investigate the congruence of  $V_\frp \ul M$, we impose the {\it strong semi-stability} on effective $A$-motives, which is originally defined for {\it analytic $\tau$-sheaves} by Gardeyn; see \cite{Ga02} and \cite{Ga03}.
The Tate module $V_\frp\ul M$ associated with strongly semi-stable $\ul M$  behaves as ``semi-stable representations'' in the following sense.  
If $\ul M$ has strongly semi-stable reduction at a finite place $u$ of $K$ lying above $\frp$, the restriction $V_\frp\ul M|_{G_u}$ is isomorphic (up to semi-simplification) to a direct sum of $\frp$-adic representations arising from {\it effective local shtukas}.
Local shtukas are linear algebraic objects over the valuation ring of $K_u$  describing lattices of {\it equal-characteristic crystalline representations} in Hodge-Pink theory (cf.\ \cite{Har09}, \cite{Ki09}, \cite{GL11}, \cite{Har11}, and \cite{HK20}).
Although there are still no definitions of equal-characteristic analog of Hodge-Tate and semi-stable $p$-adic representations, we can see that $V_\frp\ul M|_{G_u}$ has a behavior expected of equal-characteristic semi-stable representations.
As we will see in Theorem \ref{thm.tamemot}, the Hodge-Pink weights (and the dimension) of $\ul M$ relate with the tame inertia weights of $V_\frp\ul M|_{G_u}$ under some condition on ramification.
This is an analogue of results on semi-stable $p$-adic representations in \cite{Car06} and \cite{CS09}. 
On the other hand, the strong semi-stability of $\ul M$ at a finite place $v\nmid \frp$ implies that $V_\frp\ul M|_{G_v}$ is ``semi-stable'' in the sense that the inertia subgroup $I_v$ of $G_v$ acts on $V_\frp\ul M$ unipotently.
This arrows us to consider the Frobenius characteristic polynomial of $V_\frp\ul M$ at $v$ and define the multi-set $\W_v(\ul M)$ of {\it Weil weights} at $v$ via $\infty$-adic  absolute values of roots of the characteristic polynomial.
Then we will see in Proposition \ref{prop.freigen} that the Weil weights are related to the Hodge-Pink weights of $\ul M$.
In consequence, if $\ul M$ has strongly semi-stable reduction at both $u\mid \frp$ and $v\nmid \frp$, then the coefficients of the Frobenius characteristic polynomial at $v$ can be described by the reduction modulo $\frp$ of $V_\frp\ul M|_{G_u}$ from the perspective of tame inertia weights.

To state our first result, let $\mathsf{Mot}_{K,r,v}(u,h)$ be the set of effective $A$-motives of rank $r$ and of height $\leq h$ which are strongly semi-stable at $v$ and $u$, and satisfy a set of conditions; see Definition \ref{defi.mot}.
Note that we have $v\nmid \frp$ and $u\mid \frp$ under the assumptions of our results (Theorems \ref{thm.m1} and \ref{thm.m2}).
We denote by $d_\frp=[\bF_\frp:\bF_q]$ the degree of $\frp$.
Let $[K:Q]_{\rm i}$ be the inseparable degree of $K/\gamma(Q)$, that is, 
$[K:Q]_{\rm i}=[K:K_{\rm s}]$ for the maximal separable extension $K_{\rm s}$ of $\gamma(Q)$ in $K$.
We define 
\[
D_K=
\begin{cases}
\max\{d_{\frq} \mid \frq\subset A\ \mbox{is a maximal ideal dividing}\  \frd  \} & \mbox{if}\ \frd \neq A,\\
1 & \mbox{if}\ \frd=A,
\end{cases}
\]
where $\frd$ is the relative discriminant of 
$K_{\rm s}/\gamma(Q)$.
Using the strategy as in \cite{OT14}, we obtain the following result: 

\begin{theorem}[{$=$\ Theorem \ref{thm.main1}}]\label{thm.m1}
Let $r$ and $h$ be as above and fix a finite place $v$ of $K$ with degree $d_v=[\bF_v:\bF_q]$.
For any maximal ideal $\frp$ of $A$ with $d_\frp>\max\{d_vr^2h, [K:Q]_{\rm i}h, D_K\}$
and any finite place $u$ of $K$ lying above $\frp$, the following holds: 
For any $\ul M \in \mathsf{Mot}_{K,r,v}(u,h)$ and $\ul M' \in \mathsf{Mot}_{K,r,v}\left(u,(q_\frp-2) [K:Q]_{\rm i}\inv\right)$, if both
\[
\begin{cases}
V_\frp\ul M|_{G_v} \equiv_\ss V_\frp\ul M'|_{G_v} \pmod \frp, & \mbox{and}\\
V_\frp\ul M|_{G_u} \equiv_\ss V_\frp\ul M'|_{G_u} \pmod \frp & 
\end{cases}
\]
hold, 
 then one has 
$V_\frp\ul M|_{G_v} \simeq_\ss V_\frp\ul M'|_{G_v}$, $\dim \ul M=\dim \ul M'$, and $\cW_v(\ul M)=\cW_v(\ul M')$.
\end{theorem}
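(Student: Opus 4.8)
The plan is to reduce the entire statement to the single equality $P_{v,\ul M}=P_{v,\ul M'}$, where $P_{v,\ul M}$ is the characteristic polynomial of a geometric Frobenius $\Frob_v$ acting on $(V_\frp\ul M|_{G_v})^\ss$, and then to obtain it by promoting the congruence modulo $\frp$ at $v$ to an honest equality once a size bound on its coefficients is available; the congruence at $u$ is what provides that bound, together with the equality of dimensions. First I would set up the reduction at $v$: since $\ul M$ and $\ul M'$ are strongly semi-stable at $v\nmid\frp$, the inertia group $I_v$ acts unipotently on $V_\frp\ul M$ and on $V_\frp\ul M'$, so $(V_\frp\ul M|_{G_v})^\ss$ and $(V_\frp\ul M'|_{G_v})^\ss$ are unramified, and a semisimple unramified representation is determined by the characteristic polynomial of $\Frob_v$; hence $P_{v,\ul M}=P_{v,\ul M'}$ already yields $V_\frp\ul M|_{G_v}\simeq_\ss V_\frp\ul M'|_{G_v}$, and, as $\cW_v(\ul M)$ is by definition the multiset of $\infty$-adic valuations of the roots of $P_{v,\ul M}$, also $\cW_v(\ul M)=\cW_v(\ul M')$.

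Next I would exploit the place $u\mid\frp$. The congruence $V_\frp\ul M|_{G_u}\equiv_\ss V_\frp\ul M'|_{G_u}\pmod\frp$ means that $(T/\frp T)^\ss$ and $(T'/\frp T')^\ss$ are isomorphic as $\bF_\frp[G_u]$-modules, in particular over $I_u$, so they carry the same tame inertia weights. Since $d_\frp>D_K$ forces $\frp$ to be unramified in $K_{\rm s}/\gamma(Q)$, the ramification index $e(u/\frp)$ divides $[K:Q]_{\rm i}$; and since $\ul M$ and $\ul M'$ have strongly semi-stable reduction at $u$, Theorem \ref{thm.tamemot} applies to both. For $\ul M$ the inequality $d_\frp>[K:Q]_{\rm i}h$ keeps the Hodge--Pink weights, scaled by $e(u/\frp)$, strictly below $q_\frp-1$ (an elementary estimate on $d_\frp$, using $q_\frp=q^{d_\frp}$), and for $\ul M'$ the imposed height bound $(q_\frp-2)[K:Q]_{\rm i}\inv$ does the same --- precisely the range in which Theorem \ref{thm.tamemot} recovers the Hodge--Pink multiset and the dimension from the tame inertia weights at $u$. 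Applying it to both motives and invoking the coincidence of their tame inertia weights, I obtain $\HP(\ul M)=\HP(\ul M')$ and $\dim\ul M=\dim\ul M'$. In particular the Hodge--Pink weights of $\ul M'$ now lie in the same finite range as those of $\ul M$, so Proposition \ref{prop.freigen} supplies for $\ul M'$ the same bound on Weil weights at $v$ --- hence on the coefficients of $P_{v,\ul M'}$ --- as for $\ul M$.

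It then remains to prove $P_{v,\ul M}=P_{v,\ul M'}$. By Proposition \ref{prop.freigen} and the construction of $\cW_v$, both polynomials have coefficients in $A$, and by the bounds of the preceding step each coefficient $c$ satisfies $-\ord_\infty(c)\le d_vr^2h$. The congruence at $v$ forces $\ol{P_{v,\ul M}}=\ol{P_{v,\ul M'}}$ in $\bF_\frp[X]$ --- both being the Frobenius characteristic polynomial of the common unramified residual semi-simplification at $v$ --- so every coefficient of $P_{v,\ul M}-P_{v,\ul M'}$ lies in $\frp$. If such a coefficient $c$ were nonzero, comparing the degree of its divisor of zeros with that of its divisor of poles (supported at $\infty$, where $\deg\infty=1$) would force $-\ord_\infty(c)\ge\deg\frp=d_\frp$, contradicting $-\ord_\infty(c)\le d_vr^2h<d_\frp$. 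Hence $P_{v,\ul M}=P_{v,\ul M'}$, and by the first paragraph the theorem follows.

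The main obstacle is the step at $u$: one must use Theorem \ref{thm.tamemot} in both directions, which is exactly why $\ul M'$ is allowed height only up to $(q_\frp-2)[K:Q]_{\rm i}\inv$ --- the largest value for which tame inertia weights still determine the Hodge--Pink weights --- and why the auxiliary conditions $d_\frp>[K:Q]_{\rm i}h$ and $d_\frp>D_K$ are needed, the first to keep those weights in range and the second to bound the ramification of $u$ over $\frp$ so that Theorem \ref{thm.tamemot} is applicable. The other ingredient, that $P_{v,\ul M}$ and $P_{v,\ul M'}$ lie in $A[X]$ with the sharp coefficient bound $d_vr^2h$, is precisely what makes the last inequality $d_\frp>d_vr^2h$ effective.
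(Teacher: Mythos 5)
Your argument follows the same route as the paper's: reduce $V_v \simeq_\ss V'_v$ to the equality of Frobenius characteristic polynomials at $v$ via unipotence of $I_v$ and Proposition \ref{prop.chp}; use $V_u \equiv_\ss V'_u \pmod\frp$ together with Theorem \ref{thm.tamemot} to establish $\dim\ul M = \dim\ul M'$; bound the coefficients of $P_{\ul M,v} - P_{\ul M',v}$ by $q^{d_vr^2h}$ via Proposition \ref{prop.freigen}; and then promote the coefficient-wise congruence mod $\frp$ to an actual equality by the size estimate (Lemma \ref{lem.absvalue}, which you rederive via divisor degrees). The bookkeeping of where $[K:Q]_{\rm i}h < d_\frp$, $d_\frp > D_K$, and the height bound on $\ul M'$ enter is also the same as the paper's.

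One substantive misstatement: you claim that Theorem \ref{thm.tamemot}, in the relevant range, ``recovers the Hodge--Pink multiset $\ldots$ from the tame inertia weights at $u$,'' and you deduce $\HP(\ul M) = \HP(\ul M')$. Theorem \ref{thm.tamemot} does \emph{not} recover $\HP$; it only yields $\TI(V_\frp\ul M) \subset [0,eh]$ and the single scalar relation $e\cdot\dim\ul M = \Sigma\,\TI(V_\frp\ul M)$. There is no statement in the paper (nor is it true) that $\TI(V_\frp\ul M|_{G_u})$ determines $\HP(\ul M)$, so $\HP(\ul M)=\HP(\ul M')$ is unjustified. Fortunately, your argument never actually uses the full multiset equality: the estimate from Proposition \ref{prop.freigen}(2) is governed by $\dim\ul M'$, and once you have $\dim\ul M' = \dim\ul M \leq rh$ (from Theorem \ref{thm.tamemot} plus $\ul M$ being of height $\leq h$), the same coefficient bound $q^{d_vr^2h}$ applies to both polynomials. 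So the extra $\HP$ claim should be deleted rather than defended; with that removed, the proof is correct and is the paper's proof.

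A minor point: from $d_\frp > D_K$ the paper concludes $e_{u\mid\frp} = [K:Q]_{\rm i}$ exactly (unramified in $K_{\rm s}/\gamma(Q)$, totally ramified in the purely inseparable step), not just that it divides $[K:Q]_{\rm i}$. Your weaker ``divides'' still suffices for the inequalities $eh < q_\frp - 1$ and $eh' < q_\frp - 1$ and for canceling $e$ in $e\dim\ul M = e\dim\ul M'$, so the proof survives, but the stated equality is what the paper's hypothesis actually gives.
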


Our second result (Theorem \ref{thm.main2}) is motivated by a non-existence conjecture on constrained abelian varieties over number fields due to Rasmussen and Tamagawa in \cite[Conjecture 1]{RT08} as follows.
For a fixed number field $F/\bQ$ and a fixed integer $g>0$, the conjecture says that for a prime number $\ell$ large enough (with respect to $F$
and $g$), there exist no $g$-dimensional abelian varieties $\cA$ over $F$ with good reduction outside $\ell$ such that the $G_F$-action on the residue Galois representations $\cA[\ell]$ are given by upper-triangular matrices whose diagonal components are powers of the modulo $\ell$ cyclotomic character. 
The condition on $\cA[\ell]$, in other words, is that the congruence relation $V_\ell\cA \equiv_\ss \bigoplus_{i=1}^{2g}\bQ_\ell(\chi_{\ell}^{n_i}) \pmod \ell$ holds for some integers $n_i$, where $\bQ_\ell(\chi_{\ell}^{n_i})$ is the representation space of the $n_i$-power of the $\ell$-adic cyclotomic character $\chi_\ell \col G_F\ra \bZ_\ell\mal$.
Although the conjecture of Rasmussen and Tamagawa is an open problem in general, various partial results are known; see \cite{Oze11}, \cite{Oze13}, \cite{AM14}, \cite{Bou15}, \cite{RT17}, and \cite{Lom18}.
For example, in the case where considered abelian varieties $\cA$ have everywhere semi-stable reduction, the non-existence of such abelian varieties is proved in \cite[Theorem 3.6]{RT17}. 

Motivated by the above conjecture, we would like to consider the non-existence of effective $A$-motives $\ul M$ whose $\frp$-adic Tate module $V_\frp\ul M$ are ``residually Borel''.
In the case where $Q$ is the rational function field $\bF_q(t)$ and  $A=\bF_q[t]$, 
the author's previous work \cite{Oku19} proves that, under the assumption  $r \geq 2$ and $r \nmid [K:Q]_{\rm i}$,  if $d_\frp$ is sufficiently large, then there exist no rank-$r$ Drinfeld $\bF_q[t]$-modules $\ul E$ over $K$ with good reduction outside $\frp$ such that the $G_K$-action on $T_\frp\ul E\ot_{A_\frp}\bF_\frp$ is given by upper-triangular matrices whose diagonal components are $\bF_\frp\mal$-valued characters arising from tensor powers of the Carlitz motive $\ul \cC$ (Example \ref{ex.carlitz}).
Here the Carlitz motive $\ul \cC$ is a one-dimensional rank-one effective $\bF[t]$-motive with everywhere good reduction, which gives a function field analog of the theory on cyclotomic extensions of number fields.
For general $A$, replacing $\ul \cC$ by one-dimensional rank-one effective $A$-motives with good reduction, we show the following non-existence result:

\begin{theorem}[{$=$\ Theorem \ref{thm.main2}}]\label{thm.m2}
Let $r \geq 2$ be an integer and $h\geq 0$.
Let $v$ be a finite place of $K$.
For any maximal ideal $\frp$  of $A$ with $d_\frp>\max\{d_vr^2h[K:Q]_{\rm i}, D_K\}$ and any finite place $u$ of $K$ lying above $\frp$, there 
exist no effective $A$-motives $\ul M \in \mathsf{Mot}_{K,r,v}(u,h)$ satisfying the following conditions:
\begin{itemize}
\item There exist  at least one Weil weight $w \in \W_v(\ul M)$ such   that $[K:Q]_{\rm i}w$ is a non-integer,
\item There exist one-dimensional rank-one effective $A$-motives $\ul M_1,\ldots,\ul M_r$ over $K$ having good reduction at both
$v$ and $u$ such that 
\[
\begin{cases}
\DS V_\frp\ul M|_{G_v} \equiv_\ss \bigoplus_{i=1}^rV_\frp(
\ul{M}_i^{\ot m_i})|_{G_v} \pmod \frp
, & \mbox{and}\\
\DS V_\frp\ul M|_{G_u} \equiv_\ss \bigoplus_{i=1}^rV_\frp(
\ul{M}_i^{\ot m_i})|_{G_u} \pmod \frp &
\end{cases}
\]
hold for some non-negative integers $m_1,\ldots,m_r$.
\end{itemize}
\end{theorem}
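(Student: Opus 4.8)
The plan is to derive Theorem~\ref{thm.m2} from Theorem~\ref{thm.m1} by playing the two hypotheses against each other. Suppose for contradiction that such an $\ul M\in\mathsf{Mot}_{K,r,v}(u,h)$ exists, together with one-dimensional rank-one effective $A$-motives $\ul M_1,\dots,\ul M_r$ with good reduction at $v$ and $u$ and non-negative integers $m_1,\dots,m_r$ realizing the two congruences. First I would assemble the test object $\ul M':=\bigoplus_{i=1}^r \ul M_i^{\ot m_i}$ and check that it lies in $\mathsf{Mot}_{K,r,v}\bigl(u,(q_\frp-2)[K:Q]_{\rm i}\inv\bigr)$. Since each $\ul M_i$ is one-dimensional of rank one with good reduction at $v$ and $u$, the tensor power $\ul M_i^{\ot m_i}$ has dimension $m_i$ and height $\leq m_i$, still with good (hence strongly semi-stable) reduction at $v$ and $u$; one then needs the bound $m_i\leq (q_\frp-2)[K:Q]_{\rm i}\inv$ for each $i$. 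The congruence $V_\frp\ul M|_{G_u}\equiv_\ss \bigoplus V_\frp(\ul M_i^{\ot m_i})|_{G_u}\pmod\frp$ combined with Theorem~\ref{thm.tamemot}-type control of tame inertia weights pins the $m_i$ down in terms of Hodge–Pink weights of $\ul M$, which are bounded by $h$ up to the inseparability factor; the hypothesis $d_\frp>\max\{d_vr^2h[K:Q]_{\rm i},D_K\}$ (note the extra factor $[K:Q]_{\rm i}$ relative to Theorem~\ref{thm.m1}) should be exactly what forces $[K:Q]_{\rm i}m_i\leq d_\frp$-type inequalities to collapse to $m_i\leq (q_\frp-2)[K:Q]_{\rm i}\inv$, after translating between $d_\frp$ and $q_\frp=q^{d_\frp}$.

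Once $\ul M'\in\mathsf{Mot}_{K,r,v}\bigl(u,(q_\frp-2)[K:Q]_{\rm i}\inv\bigr)$ is verified, and since the hypothesis $d_\frp>\max\{d_vr^2h[K:Q]_{\rm i},D_K\}$ a fortiori gives $d_\frp>\max\{d_vr^2h,[K:Q]_{\rm i}h,D_K\}$, Theorem~\ref{thm.m1} applies to the pair $(\ul M,\ul M')$ and yields $\cW_v(\ul M)=\cW_v(\ul M')$ together with $\dim\ul M=\dim\ul M'$ and $V_\frp\ul M|_{G_v}\simeq_\ss V_\frp\ul M'|_{G_v}$. The key point is then to compute $\cW_v(\ul M')$ directly and show every Weil weight of $\ul M'$ satisfies the integrality-after-scaling-by-$[K:Q]_{\rm i}$ property, contradicting the first bulleted hypothesis on $\ul M$. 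For this I would use Proposition~\ref{prop.freigen} relating Weil weights to Hodge–Pink weights: for a one-dimensional rank-one effective $A$-motive $\ul M_i$ with good reduction at $v$, the Hodge–Pink weights of $\ul M_i^{\ot m_i}$ are concentrated (each equal to $m_i$ up to sign conventions), and the associated Weil weight at $v$ should be $m_i$ (or $m_i d_v/d_v=m_i$) divided by $[K:Q]_{\rm i}$ at worst — in any case of the form (integer)$/[K:Q]_{\rm i}$, so that $[K:Q]_{\rm i}\cdot w$ is an integer for every $w\in\cW_v(\ul M')$. Since $\cW_v(\ul M)=\cW_v(\ul M')$, this contradicts the existence of a Weil weight $w\in\cW_v(\ul M)$ with $[K:Q]_{\rm i}w\notin\bZ$.

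The main obstacle I anticipate is the verification that $\ul M'=\bigoplus\ul M_i^{\ot m_i}$ genuinely lands in the prescribed class $\mathsf{Mot}_{K,r,v}\bigl(u,(q_\frp-2)[K:Q]_{\rm i}\inv\bigr)$: this requires (a) checking that tensor products and direct sums of effective $A$-motives with good reduction at $v$ and $u$ remain strongly semi-stable at $v$ and $u$ and satisfy whatever auxiliary conditions are packaged into Definition~\ref{defi.mot}, and (b) extracting from the modulo-$\frp$ congruence at $u$ — via the tame-inertia-weight dictionary of Theorem~\ref{thm.tamemot} and the hypothesis on $d_\frp$ — an effective upper bound on each $m_i$, i.e. showing the exponents cannot be so large that $\ul M_i^{\ot m_i}$ exceeds the allowed height $(q_\frp-2)[K:Q]_{\rm i}\inv$. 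This bounding step is the technical heart: it is the function-field analogue of the argument in \cite[\S3.3]{RT17} controlling the cyclotomic exponents $n_i$, and it is where the slightly stronger hypothesis $d_\frp>d_vr^2h[K:Q]_{\rm i}$ (versus Theorem~\ref{thm.m1}'s hypotheses) must be spent. A secondary, more routine point is keeping the sign convention of Remark~\ref{rem.HPneg} straight so that the Hodge–Pink weights of $\ul M_i^{\ot m_i}$ come out with the correct sign and the Weil-weight computation via Proposition~\ref{prop.freigen} gives integers after multiplication by $[K:Q]_{\rm i}$ rather than merely rationals with bounded denominator.
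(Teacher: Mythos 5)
Your plan of reducing Theorem \ref{thm.m2} to Theorem \ref{thm.m1} by building a test motive $\ul M'$ and comparing Weil weights is the right instinct, but there is a genuine gap in the middle that the paper has to work around with a device you have not anticipated. First, the test object $\ul M'=\bigoplus_i\ul M_i^{\ot m_i}$ cannot itself be used: the $m_i$ are arbitrary non-negative integers carrying no bound at all, since the mod-$\frp$ hypotheses depend on $m_i$ only through its residue modulo $q_\frp-1$. One must therefore pass to reduced exponents. The paper does exactly this, introducing $n_i\in[0,q_\frp-2]$ as the tame inertia weight of $\chi_i^{m_i}|_{I_u}$, where $\chi_i$ is the residual character of $\ul M_i$; Example \ref{ex.rankone} shows $\chi_i|_{I_u}=\theta_1^{e_{u\mid\frp}}=\theta_1^{[K:Q]_{\rm i}}$, so $n_i\equiv[K:Q]_{\rm i}\,m_i\ (\bmod\ q_\frp-1)$, and Theorem \ref{thm.tamemot} applied to $\ul M$ forces $0\leq n_i\leq h[K:Q]_{\rm i}$. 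The crucial point you are missing: because of the factor $[K:Q]_{\rm i}$ sitting in the congruence $n_i\equiv[K:Q]_{\rm i}m_i$, the motive $\ul M':=\bigoplus_i\ul M_i^{\ot n_i}$ does \emph{not} satisfy $V_\frp\ul M|_{G_v}\equiv_\ss V_\frp\ul M'|_{G_v}\pmod\frp$, since $\chi_i(\Frob_v)^{n_i}\neq\chi_i(\Frob_v)^{m_i}$ in general when $[K:Q]_{\rm i}>1$. Hence Theorem \ref{thm.m1} simply does not apply to the pair $(\ul M,\ul M')$, and the step you flag as ``routine'' is actually where the proof cannot be completed along your lines.

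The paper's workaround is to compare not $P_{\ul M,v}(X)$ with $P_{\ul M',v}(X)$, but $P_{\ul M,v}^{\SSC[K:Q]_{\rm i}}(X):=\det\bigl(X-\Frob_v^{[K:Q]_{\rm i}}\mid V_v^\ss\bigr)$ with $P_{\ul M',v}(X)$: the $[K:Q]_{\rm i}$-th power of Frobenius absorbs the mismatch, since $\chi_i\bigl(\Frob_v^{[K:Q]_{\rm i}}\bigr)^{m_i}=\chi_i(\Frob_v)^{[K:Q]_{\rm i}m_i}=\chi_i(\Frob_v)^{n_i}\pmod\frp$. One then reruns the estimate of Proposition \ref{prop.freigen} and Lemma \ref{lem.absvalue} on these two polynomials — this is where $d_\frp>d_vr^2h[K:Q]_{\rm i}$ is spent, since the roots of $P_{\ul M,v}^{\SSC[K:Q]_{\rm i}}$ have absolute values $q_v^{w[K:Q]_{\rm i}}$ — to conclude $P_{\ul M,v}^{\SSC[K:Q]_{\rm i}}(X)=P_{\ul M',v}(X)$, so every $w\in\W_v(\ul M)$ has $[K:Q]_{\rm i}w\in\W_v(\ul M')=\{n_1,\dots,n_r\}\subset\bZ$, contradicting the hypothesis. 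So the final contradiction is the same as you envision, but the argument is a modified rerun of the Theorem \ref{thm.m1} machinery with $\Frob_v$ replaced by $\Frob_v^{[K:Q]_{\rm i}}$ on one side, not a black-box invocation of that theorem. (A minor further slip in your sketch: $\ul M_i^{\ot n_i}$ is pure of integer weight $n_i$, so its Weil weight at $v$ is $n_i$ itself, not ``$(\text{integer})/[K:Q]_{\rm i}$''.)
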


Remark that we can not expect that the bounds of $d_\frp$ in Theorems \ref{thm.m1} and \ref{thm.m2} are optimal because they are merely a combination of conditions necessary for proof.
However, in terms of the uniformity problem, it is worth the fact that these bounds are independent of effective $A$-motives themselves. 

\subsection{Organization of this article}

The purpose of \S2 is to review the definitions and basic facts on Galois representations used in this article.
In particular, we state that the characteristic polynomials completely determine the semi-simplifications of  Galois representations.
(This is well-known, but few references describe it, so we give the proof  in Appendix.)  
In \S3, we introduce basics about effective $A$-motives, often without proof. 
First, we recall the fundamental concepts on effective $A$-motives over general $A$-fields, and then we study  $\frp$-adic and de Rham cohomology realizations of effective $A$-motives over $K$.
In \S4, following \cite{Ki09} and \cite{HK20}, over a complete discrete valuation field $L$ with a perfect residue field, we study the equal-characteristic Fontaine theory described by local shtukas and torsion local shtukas.
As an analog of Raynaud's classification of finite flat group schemes of $p$-power rank, we classify torsion local shtukas with ``coefficients'' of rank one and investigate  tame inertia weights of residue representations arising from local shtukas; 
 this will be extended to strongly semi-stable effective $A$-motives in the next section.
We introduce in \S5 the notion of {\it analytic $A(1)$-motives}, a rigid analytic version of effective $A$-motives.
Considering rigid analytification, we define and investigate the notion of strongly semi-stable reduction for effective $A$-motives.
In \S6, after preparing properties on  Weil weights, we prove our results.

\subsection{Notation and convention}\label{subsec.notation}

Let $\bF_q, Q, \infty, A,$ and $\gamma\col A\ra K$ be as in the beginning of \S\S \ref{subsec.motivation}.
We use the following notation in this article.

\begin{itemize}
\item We mean by a {\it multi-set}  a collection of unordered numbers allowing for multiplicity.
For a multi-set $\cX$, we define 
\[
\Sigma\cX:=\sum_{x\in \cX}x
\] if the right hand side is well-defined.
For example, if $\cX=\{x,x,x\}$, then $\Sigma\cX=3x$.
\item For a ring homomorphism $\sigma\col R \ra R$ and an $R$-module $M$, we set $\sig\ast M:=M\ot_{R,\sig}R$ and $\sig\ast f:=f\ot\id\col \sig\ast M\ra \sig\ast N$ for an $R$-homomorphism $f\col M\ra N$.
For any $m\in M$, we write $\sig\ast m:=m\ot 1 \in \sig\ast M$.
\item Let $v$ be a place of $K$. Then the symbols $K_v$, $\cO_v,$ and $\bF_v$  mean the completion of $K$ at $v$, the valuation ring of $K_v$, and the residue field of $K_v$, respectively.
Denote by $d_v:=[\bF_v:\bF_q]$ the degree of $v$ over $\bF_q$.
For each $v$, we fix an embedding $\bar K \ra \bar K_v$ between fixed algebraic closures of $K$ and $K_v$, and identify the decomposition group $G_v \subset G_K$ as $G_{K_v}=\Gal(K_v^\sep/K_v)$.
\item For the fixed place $\infty$ of $Q$ whose residue field is $\bF_q$, let $|\cdot|_\infty\col Q\ra \bR$ be the corresponding absolute value normalized such as 
$|a|_\infty=\#A/aA$ for any $a\in A$. 
We fix an algebraic closure $\bar Q$ of $Q$ and extend the absolute value to $\bar Q$, which is again denoted by $|\cdot|_\infty$.
\end{itemize}  

For the absolute value $|\cdot|_\infty$, we have the following elementally lemma used to compare Frobenius characteristic polynomials to prove the main results:

\begin{lemma}\label{lem.absvalue}
Let $\frp$ be a maximal ideal of $A$.
If $a\in A$ satisfies $a\equiv 0 \pmod \frp$ and $|a|_\infty < q^{d_\frp}$, then $a=0$. 
\end{lemma}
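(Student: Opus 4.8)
The plan is to prove the contrapositive by a cardinality argument. Assume $a \neq 0$; the goal is then to show $|a|_\infty \geq q^{d_\frp}$, which contradicts the hypothesis. First I would unwind the congruence: $a \equiv 0 \pmod \frp$ means exactly that $a$ lies in $\frp$, i.e. $(a) \subseteq \frp$ as ideals of $A$, so reduction modulo $\frp$ factors through $A/(a)$ and yields a surjection of rings $A/(a) \thra A/\frp = \bF_\frp$.

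Next I would invoke the structure of $A$: it is a Dedekind domain, finitely generated as an $\bF_q$-algebra (the ring of functions on the smooth projective model of $Q$ with the point $\infty$ removed), so for nonzero $a$ the quotient $A/(a)$ is a finite $\bF_q$-algebra. By the normalization of the $\infty$-adic absolute value recorded in \S\ref{subsec.notation}, one has $|a|_\infty = \#\big(A/aA\big)$. Since the surjection above forces $\#\big(A/aA\big) \geq \#\big(A/\frp\big)$ and $\#\bF_\frp = q^{[\bF_\frp:\bF_q]} = q^{d_\frp}$, I obtain $|a|_\infty \geq q^{d_\frp}$, contradicting $|a|_\infty < q^{d_\frp}$. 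Hence $a = 0$.

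There is no substantial obstacle here; the statement is elementary. The only care needed is in the bookkeeping: identifying the congruence condition with the ideal containment $(a) \subseteq \frp$, recalling that the chosen normalization makes $|a|_\infty$ literally the cardinality $\#\big(A/aA\big)$ for $a \neq 0$, and noting $q^{d_\frp} = \#\bF_\frp$. Once these identifications are made, the conclusion is the trivial observation that a surjective ring homomorphism cannot increase cardinality.
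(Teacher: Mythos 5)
Your proof is correct. The overall strategy matches the paper's: assume $a \neq 0$, identify $|a|_\infty$ with $\#\bigl(A/aA\bigr)$, and derive $\#\bigl(A/aA\bigr) \geq q^{d_\frp}$ to contradict the hypothesis. The difference is in how that inequality is obtained: the paper uses the Dedekind factorization $aA = \frp^e\fra$ with $e > 0$ and $\frp \nmid \fra$, then multiplies cardinalities, whereas you note that $(a) \subseteq \frp$ forces a surjection $A/(a) \thra A/\frp$ and read off the inequality directly. Your version is a hair more elementary (no appeal to unique factorization of ideals or the Chinese Remainder Theorem), while the paper's version makes the exact cardinality transparent; both are equally valid and either would serve the lemma's single downstream use in comparing Frobenius characteristic polynomials.
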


\begin{proof}
Assume that $a\neq 0$.
Then by  $a\equiv 0 \pmod \frp$, the principal ideal $aA$ decomposes as  
$aA=\frp^e\fra$, where $e>0$ and $\frp \nmid \fra$.
Thus it follows that 
\[
|a|_\infty=\#A/aA=(\#A/\frp^e)\cdot (\# A/\fra) \geq q^{d_\frp}.
\]
This is a contradiction.
\end{proof}

%
%
\section{Preliminary}
This section recalls some basic facts on Galois representations used in this article. 
Let $G$ be a profinite group and 
$F$  a topological field.
(We consider the discrete topology on $F$ if it is finite.) 
On finite-dimensional $F$-vector spaces, we consider the product topology induced by $F$.
By an {\it $F$-representation of $G$}, we mean a finite-dimensional $F$-vector space with a continuous $G$-action.
For an $F$-representation $V$ of  $G$, the Jordan-H\"{o}lder theorem gives a descending filtration by  $G$-stable $F$-subspaces
\[
\{0\}=V_0 \subset V_1 \subset \cdots \subset V_{n}=V
\] 
such that each $V_{i}/V_{i-1}$ is irreducible. 
Then the {\it semi-simplification} of $V$ is defined by 
\[
V^\ss:=\bigoplus_{i=1}^n V_i/V_{i-1},
\]
which is completely determined by $V$ up to isomorphisms.

\begin{definition}
For two $F$-representations $V$ and $V'$ of $G$, we write $V \simeq_\ss V'$ if their semi-simplifications $V^\ss$ and ${V'}^{, \ss}$ are isomorphic as $F$-representations of $G$.
\end{definition}

For any element $s \in G$, we denote the characteristic polynomial of $s$ by $P_{V,s}(X):=\det(X-s\mid V)$.
Since the characteristic polynomials are multiplicative in short exact sequences, 
two $F$-representations $V$ and $V'$ with $V\simeq_\ss V'$ satisfy $P_{V,s}(X)=P_{V',s}(X)$ for any $s \in G$.
It is well-known that the converse also holds.
This fact is crucial for our study.

\begin{theorem}[Brauer-Nesbitt]\label{thm.BN}
Two $F$-representations $V$ and $V'$ of $G$ satisfy $V\simeq_\ss V'$ if and only if $P_{V,s}(X)=P_{V',s}(X)$ for any $s\in G$.  
\end{theorem}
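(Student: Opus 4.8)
The plan is to prove the nontrivial implication (equality of characteristic polynomials $\Rightarrow$ $V\simeq_\ss V'$); the reverse is the easy direction, already recorded above via multiplicativity of characteristic polynomials in short exact sequences. First I would reduce to the semisimple case: replacing $V,V'$ by $V^\ss,{V'}^\ss$ changes neither side (again by multiplicativity), so we may assume $V$ and $V'$ are semisimple, whence $V\simeq_\ss V'$ just means $V\cong V'$. Comparing $P_{V,1}$ with $P_{V',1}$ also gives $\dim_F V=\dim_F V'$; call this common value $n$, and write $\rho\col G\to\GL_F(V)$, $\rho'\col G\to\GL_F(V')$ for the two representations.

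The crux — and the step I expect to be the genuine obstacle — is to upgrade the hypothesis from the group $G$ to the whole group algebra $F[G]$, i.e.\ to prove $\det(X-\rho_V(a))=\det(X-\rho_{V'}(a))$ for every $a=\sum_i c_i s_i\in F[G]$, where $\rho_V(a)=\sum_i c_i\rho(s_i)$ and likewise for $\rho'$. The idea: regard the coefficients of $\det(X-\sum_i c_i\rho(s_i))$ as polynomials over $F$ in $X$ and the $c_i$. Each such coefficient is invariant under simultaneous conjugation of the matrices $\rho(s_i)$, hence is a polynomial expression in the coefficients of the characteristic polynomials of the monomials $\rho(s_{i_1})\cdots\rho(s_{i_l})=\rho(s_{i_1}\cdots s_{i_l})$ — a single group element in each case; this is the essential content of the Brauer--Nesbitt theorem, obtained by polarizing the characteristic polynomial (equivalently, it is the invariant theory of tuples of matrices: Procesi over a field of characteristic zero, Donkin over an arbitrary base; in the language of pseudo-representations, a determinant on $F[G]$ in the sense of Chenevier is already determined by its restriction to $G$). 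Since the hypothesis says precisely that $\rho(g)$ and $\rho'(g)$ have equal characteristic polynomials for every $g\in G$, all those coefficients agree for $\rho$ and $\rho'$, so $\det(X-\sum_i c_i\rho(s_i))=\det(X-\sum_i c_i\rho'(s_i))$ identically, and specializing the $c_i$ gives the upgrade. It is exactly here that one needs characteristic polynomials rather than mere traces: a trace-only (``pseudo-character'') argument would pin down the constituents of $V$ only modulo the characteristic of $F$. (When $F$ is finite, so that $\rho$ factors through a finite quotient of $G$, one may instead bypass this step by lifting the eigenvalues of each $\rho(g)$, which are roots of unity, to characteristic zero and invoking the classical Brauer--Nesbitt theorem on modular characters.)

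Granting the upgrade, I would finish by Jacobson density. Set $M:=V\oplus V'$, a semisimple $F[G]$-module; its image $\bar B$ in $\End_F(M)$ is then a finite-dimensional semisimple $F$-algebra whose simple modules are exactly the distinct simple constituents $W_1,\dots,W_t$ of $M$. For each $j$, let $e_j\in\bar B$ be the central idempotent cutting out the $W_j$-isotypic component of $M$, and choose $a_j\in F[G]$ mapping to $e_j$. Write $m_j$ and $m_j'$ for the multiplicities of $W_j$ in $V$ and in $V'$. Then $\rho_V(a_j)$ is the idempotent projecting $V$ onto its $W_j$-isotypic component, whose image is an $F$-subspace of dimension $m_j\dim_F W_j$; hence $P_{V,a_j}(X)=X^{\,n-m_j\dim_F W_j}(X-1)^{m_j\dim_F W_j}$, and similarly $P_{V',a_j}(X)=X^{\,n-m_j'\dim_F W_j}(X-1)^{m_j'\dim_F W_j}$. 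By the upgraded equality $P_{V,a_j}=P_{V',a_j}$ and unique factorization in $F[X]$ we get $m_j\dim_F W_j=m_j'\dim_F W_j$, so $m_j=m_j'$; since this holds for every $j$, $V\cong V'$, i.e.\ $V\simeq_\ss V'$, which completes the proof.
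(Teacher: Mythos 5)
Your proof is correct, and it takes a meaningfully different route from the paper's. You rightly isolate the ``upgrade'' from $G$ to the group algebra $F[G]$ as the genuine crux, and in fact the paper passes over this point silently: it derives Theorem~\ref{thm.BN} by ``applying Proposition~\ref{prop.BN} to $\bsA=F[G]$,'' but the hypothesis of Proposition~\ref{prop.BN} is equality of $P_{\,\cdot\,,a}(X)$ for \emph{all} $a\in F[G]$, which is strictly stronger than what Theorem~\ref{thm.BN} supplies; the implicit reduction is exactly the amplification you describe. Where the two proofs genuinely diverge is in the finishing step once one has characteristic-polynomial equality over all of $F[G]$. Both use Jacobson density to produce an element $c\in F[G]$ whose action is the projection onto a fixed isotypic component. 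The paper then reads off only $\Tr_{\bigwedge^k M}(c)$, which pins down the multiplicities just modulo $p$ (via $\bigwedge^{p^\nu}$ and a Lucas-type analysis of $\binom{p^\nu d}{p^\nu}$), after which it peels off a common direct summand and iterates. You instead observe that $P_{V,c}(X)=X^{\,n-k}(X-1)^{k}$ with $k=m_j\dim_F W_j$ an honest integer, so a single comparison of monic polynomials in $F[X]$ forces $m_j=m_j'$ outright, with no induction and no binomial bookkeeping; that finishing step is cleaner. The price is that your amplification step invokes the invariant theory of several matrices (Donkin in positive characteristic), a heavy input---but one that any complete proof in characteristic $p$ must pay for in some form, and you have at least made the debt explicit where the paper leaves it tacit. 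One small caveat: the Chenevier citation is somewhat circular, since ``a determinant on $F[G]$ is determined by its restriction to $G$'' is essentially Brauer--Nesbitt restated in pseudo-representation language; the Donkin route, or (for $F$ finite, where $\rho$ factors through a finite quotient) the classical Brauer-character route, is what actually carries the load.
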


\begin{proof}
The proof is given in Appendix. 
Note that it is often called the Brauer-Nesbitt theorem in the case where $G$ is a finite group; see \cite[Theorem 30.16]{CR62}. 
\end{proof}

Now for  a maximal ideal
$\frp$ of $A$ and a place $v$ of $K$, let us consider the case where $F=Q_\frp$ and $G=G_v=\Gal(K_v^\sep/K_v)$.
We denote by $I_v$ the inertia subgroup of $G_v$.
For a $\frp$-adic representation $V$ of $G_v$, we say that $V$ is {\it unramified $($at $v$$)$} if $I_v$ trivially acts on $V$.
Let $\phi_v$ be the arithmetic Frobenius of $G_{\bF_v}=\Gal(\bF_v^\sep/\bF_v)$, that is, $\phi_v(x)=x^{q_v}$ for $x\in \bF_v^\sep$.
Take a lift  $\Frob_v \in G_v$ of $\phi_v$ via $G_v \thra G_{\bF_v}$ and set $P_{V,v}(X):=P_{V,\Frob_v}(X)$.
Then the Brauer-Nesbitt theorem particularly implies the following:

\begin{proposition}\label{prop.chp}
Let $V$ and $V'$ be $\frp$-adic representations of $G_v$ with the same dimension. 
Suppose that $I_v$ acts unipotently on both $V$ and $V'$.
Then $V \simeq_\ss V'$ if and only if $P_{V, v}(X)=P_{V', v}(X)$.
\end{proposition}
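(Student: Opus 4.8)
The plan is the following. The forward implication is immediate: characteristic polynomials are multiplicative in short exact sequences, so $P_{V,s}(X)=P_{V^\ss,s}(X)$ for every $s\in G_v$, and $V\simeq_\ss V'$ means $V^\ss\cong {V'}^\ss$, whence evaluating at $s=\Frob_v$ gives $P_{V,v}(X)=P_{V',v}(X)$. So the content lies in the converse, and by the Brauer--Nesbitt theorem (Theorem \ref{thm.BN}) it suffices to deduce $P_{V,s}(X)=P_{V',s}(X)$ for \emph{all} $s\in G_v$ from the single equality $P_{V,v}(X)=P_{V',v}(X)$. That is what I would establish, in three steps.

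First I would show that $V^\ss$ and ${V'}^\ss$ are unramified, i.e.\ that $I_v$ acts trivially on them; this is the one place the unipotence hypothesis is used. Let $W$ be an irreducible subquotient of $V$. Unipotence passes to submodules and to quotient modules, so $I_v$ acts on $W$ through unipotent operators; since $I_v$ is normal in $G_v$, the subspace $W^{I_v}$ of $I_v$-invariants is a $G_v$-submodule; and a group of unipotent operators on a nonzero vector space has a nonzero common fixed vector (Kolchin's theorem, which is valid over an arbitrary field --- a point worth noting here since $Q_\frp$ has characteristic $p$, so unipotent elements need not have infinite order). Hence $W^{I_v}\ne 0$, so $W^{I_v}=W$ by irreducibility, and therefore $I_v$ acts trivially on $V^\ss=\bigoplus_i V_i/V_{i-1}$, and likewise on ${V'}^\ss$. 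Both then factor through the procyclic group $G_v/I_v\cong G_{\bF_v}$, which is topologically generated by the image $\phi_v$ of $\Frob_v$.

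Next I would bootstrap the equality at $\Frob_v$ to an equality over all of $G_v/I_v$. For a $Q_\frp$-representation $W$ of $G_{\bF_v}$ and an integer $n\ge 0$, the multiset of eigenvalues of $\phi_v^{\,n}$ on $W$ (in an algebraic closure of $Q_\frp$) is the multiset of $n$-th powers of the eigenvalues of $\phi_v$, so $P_{W,\phi_v^{\,n}}(X)$ is determined by $P_{W,\phi_v}(X)$. Applying this to $W=V^\ss$ and $W={V'}^\ss$ and using $P_{V^\ss,\phi_v}(X)=P_{V,v}(X)=P_{V',v}(X)=P_{{V'}^\ss,\phi_v}(X)$ gives $P_{V^\ss,\phi_v^{\,n}}(X)=P_{{V'}^\ss,\phi_v^{\,n}}(X)$ for all $n\ge 0$. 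Since $g\mapsto P_{V^\ss,g}(X)$ is continuous on $G_v/I_v$ --- its coefficients being, up to sign, the traces of the continuous representations $\bigwedge^i V^\ss$ --- and $\{\phi_v^{\,n}\}_{n\ge 0}$ is dense in $G_{\bF_v}$, I would conclude $P_{V^\ss,g}(X)=P_{{V'}^\ss,g}(X)$ for every $g\in G_v/I_v$. Finally, for arbitrary $s\in G_v$ with image $\bar s$ one has $P_{V,s}(X)=P_{V^\ss,s}(X)=P_{V^\ss,\bar s}(X)=P_{{V'}^\ss,\bar s}(X)=P_{V',s}(X)$, and Theorem \ref{thm.BN} concludes.

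The only delicate point is the first step, that $I_v$ acts trivially on the semisimplifications: this is exactly where unipotence is indispensable, since if $I_v$ acted through a nontrivial tame character the semisimplification would in general not be unramified and $P_{V,s}(X)$ would not be a function of $\bar s$ alone. Everything after that is soft --- reduction to a procyclic group, the bookkeeping of eigenvalue powers, and a continuity-plus-density argument --- with no genuine computation involved.
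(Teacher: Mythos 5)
Your proof is correct and takes essentially the same route as the paper: reduce to semisimplifications, observe they are unramified, and bootstrap the equality of Frobenius characteristic polynomials over the procyclic quotient $G_v/I_v$ via continuity and Brauer--Nesbitt. You actually fill in two points the paper compresses into single clauses --- the Kolchin-type argument showing that a unipotent $I_v$-action on a semi-simple representation is trivial (the paper just says "unramified at $v$ by assumption"), and the explicit eigenvalue bookkeeping that supplies equality on a dense set before invoking continuity --- so your write-up is a bit more careful than the original without being a different argument.
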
 

\begin{proof}
The ``only if'' part is trivial, so we prove the converse.
Suppose that $P_{V, v}(X)=P_{V', v}(X)$.
Considering semi-simplification, we may assume that $V$ and $V'$ are semi-simple so that they are unramified at $v$ by assumption.
Thus $V$ and $V'$ are $F$-representations of $G_{\bF_v} (\cong G_v/I_v)$, and so we have 
$P_{V,\phi_v}(X)=P_{V',\phi_v}(X)$.
Since the pro-cyclic group $G_{\bF_v}$ is topologically generated by $\phi_v$, it follows by continuity  that $P_{V,s}(X)=P_{V',s}(X)$ for any $s\in G_{\bF_v}$. 
Thus we obtain the conclusion by the Brauer-Nesbitt theorem.
\end{proof}

\begin{definition}
Let $V$ and $V'$ be $\frp$-adic representations of $G_v$ with the same dimension.
We say that $V$ and $V'$ are {\it congruent modulo} $\frp$  and write  $V \equiv_\ss V' \pmod \frp$ if for a choice of $G_v$-stable $A_\frp$-lattices $T \subset V$ and $T' \subset V'$, we have $T/\frp T \simeq_\ss T'/\frp T'$ as $\bF_\frp$-representations of $G_v$.
\end{definition}

\begin{remark}\label{rem.sscong}
Since the $G_v$-action is continuous, there exists a $G_v$-stable $A_\frp$-lattice $T$ of $V$ and  hence $P_{V,s}(X)=\det(X-s \mid T)\in A_\frp[X]$ for any $s\in G_v$.  
For any choice of $T \subset V$, the characteristic polynomial of $s\in G_v$ on $T/\frp T$ equals to $P_{V,s}(X) \pmod \frp \in \bF_\frp[X]$. 
By the Brauer-Nesbitt theorem, the semi-simplification $(T/\frp T)^\ss$ is independent of the choice of lattices and hence that $V\simeq_\ss V'$ implies $V\equiv_\ss V'\pmod \frp$.
\end{remark}

%
%
\section{Effective $A$-motives}

\subsection{Definitions} 
In this first subsection, 
we recall the notion of effective $A$-motives over general fields.
Let $(F,\gamma)$ be an {\it $A$-field}, that is, an extension  field of $\bF_q$ equipped with a (not necessary injective) $\bF_q$-algebra homomorphism 
$\gamma \colon A \ra F$. 
We set $A_F:=A\ot_{\bF_q}F$ and consider the ideal $J:=J_F:=\Ker(A_F \ra F; a\ot \lam\mapsto \gamma(a)\lam) \subset A_F$, which is generated by $\{a\ot1-1\ot \gamma(a) \mid a\in A\}$.
Note that $A_F$ is a Dedekind domain.
For the endomorphism $\sig=\sig_F:=\id_A\ot (\cdot)^q$ of $A_F$ and an $A_F$-module $M$, we set $\sig\ast M:=M\ot_{A_F,\sig}A_F$
 and write $\sig\ast m=m\ot 1 \in \sig\ast M$ for $m \in M$.

\begin{definition}
An {\it effective $A$-motive over an $A$-field $F$ of rank} $r$ is a pair $\ul M=(M,\tau_M)$ consisting of a locally free $A_F$-module of rank $r$, and an injective $A_F$-homomorphism $\tau_M\colon \sig\ast M \ra M$ such that $\Cok(\tau_M)$ is a finite-dimensional $F$-vector space  annihilated by some power of $J$.
We set $\rk\ul M:=r$ and define the {\it dimension} of $\ul M$ by $\dim\ul M:=\dim_F\Cok(\tau_{M})$.
For $h \geq 0$,
we say that $\ul M$ is  {\it of height $\leq h$}  if  $J^h\cdot \Cok(\tau_M)=0$.

A {\it morphism} $f\colon \ul M \ra \ul N$ between effective $A$-motives over $F$ is an $A_F$-homomorphism $f\colon M \ra N$ such that 
$f\circ \tau_{M} = \tau_{N} \circ \sig\ast f$.
A morphism $f\colon \ul M \ra \ul N$ is called an {\it isogeny} if $f$ is injective with torsion cokernel. 
\end{definition}

\begin{remark}
For a locally free $A_F$-module $M$ and an $A_F$-homomorphism $\tau_M\colon \sig\ast M \ra M$, if $\Cok(\tau_M)$ is annihilated by a power of $J$, then $\tau_M$ is injective and $\Cok(\tau_M)$ is finite-dimensional over $F$ by \cite[Proposition 2.3]{Har19}.
\end{remark}

\begin{example}[Unit objects]
Let $M=A_F$ and define $\tau_{M}\col \sig\ast M\ra M$ by $\tau_{M}:=\sig\ot \id$.
Then under the natural isomorphism $\sig\ast M=A_F\ot_{A_F,\sig}A_F \cong A_F;  a\ot b \mapsto \sig(a)b$, the map $\tau_{M}$ is identified as $\id\col A_F \ra A_F$. 
We denote by $\ul {A_F}=(A_F,\id)$, which is an effective $A$-motive over $F$ with $\rk \ul{A_F}=1$ and $\dim \ul{A_F}=0$, so that it is of height $\leq 0$.
\end{example}

\begin{example}[Abelian Anderson $A$-modules]
Let $r$ and $d$ be positive integers.
An {\it abelian Anderson $A$-module over $F$ of rank $r$ and dimension $d$} is a pair $\ul E=(E, \phi)$ consisting of an affine group scheme $E$ satisfying $E\cong \bG_{a,F}^d$ as $\bF_q$-module schemes over $F$ and a ring homomorphism $\phi\col A\ra \End_{F}(E); a\mapsto \phi_a$ such that  
\begin{itemize}
\item[(1)] $(\Lie \phi_a-a)^d=0$ on $\Lie E$ for any $a\in A$,
\item[(2)] the set $M:=M(\ul E):=\Hom_{\SSC{F\mbox{\tiny-groups},\ \bF_q\mbox{\tiny -lin.}}}(E,\bG_{a,F})$ of $\bF_q$-liner homomorphisms of $F$-group schemes is a locally free $A_F$-module of rank $r$ under the action given on $m\in M$ by $(a\ot \lam)m:= \lam\circ m\circ \phi_a$ for $a\in A$ and $\lam \in F$.
\end{itemize}
If $d=1$, then $\ul E$ is called a {\it Drinfeld $A$-module}.
Note that in the case where $A=\bF_q[t]$ and $F$ is perfect, an abelian Anderson $A$-module coincides with an abelian $t$-module in the sense of Anderson \cite{And86}. 
Let $\tau$ be the relative $q$-Frobenius endomorphism of $\bG_{a,F}=\Spec F[x]$ given by $x\mapsto x^q$.
For  an abelian Anderson $A$-module $\ul E$ of rank $r$ and dimension $d$ over $F$ and $M=M(\ul E)$,  define $\tau_{M}\col\sig\ast M\ra M$ by $\tau_M(\sig\ast m)=\tau\circ m$ for $m\in M$.
Then it is known that the pair $\ul{M}(\ul E)=(M(\ul E),\tau_{M})$ becomes an effective $A$-motive over $F$ of rank $r$ and dimension $d$, and that the  correspondence $\ul E \mapsto \ul{M}(\ul E)$ determines a contravariant fully faithful functor; see \cite[Theorem 1]{And86} and \cite[Theorem 3.5]{Har19}.
\end{example}

\begin{example}[the Carlitz motive]\label{ex.carlitz}
Let $A=\bF_q[t]$ and $F=\bF_q(\vartheta)$ the rational function field in variable $\vartheta$.
Suppose that $\gamma\col A\ra F$ is given by $\gamma(t)=\vartheta$, so that $A_F=\bF_q(\vartheta)[t]$ and $J=(t-\vartheta)$.
Then the {\it Carlitz motive} is given by $\ul \cC=(\bF_q(\vartheta)[t], \tau_{\cC}=t-\vartheta)$.
It is known that $\ul\cC$ is an effective $\bF_q[t]$-motive associated with the {\it Carlitz module}, which is a Drinfeld $\bF_q[t]$-module of rank one.
\end{example}

\begin{definition}
Let $\ul M$ and $\ul M'$
be effective $A$-motives over $F$.
Then the {\it direct sum} of $\ul M$ and $\ul M'$ is defined by $\ul M\oplus \ul M':=(M\oplus M',\tau_{M}\oplus\tau_{M'})$, and 
the {\it tensor product} of $\ul M$ and $\ul M'$ is defined by
$\ul M\ot \ul M':=(M\ot_{A_F} M', \tau_M\ot \tau_{M'})$.
Note that both $\ul M\oplus \ul M'$ and $\ul M\ot\ul M'$ are effective $A$-motives over $F$.
We also define the {\it tensor powers} of $\ul M$ by $\ul M^{\ot 0}:=\ul{A_F}$ and by $\ul M^{\ot n}:=\ul M^{\ot (n-1)}\ot \ul M$ for $n>0$. 
We define the {\it determinant} of $\ul M$ by $\det \ul M:=\wedge^{\rk\ul M}\ul M$.
\end{definition}

The next formulas follow easily from the elementary divisor theorem (cf.\ \cite[Page 51]{HJ20}):

\begin{proposition}\label{prop.rkdim}
Let $\ul M$ and $\ul M'$ be effective $A$-motives over $F$.
\begin{itemize}
\item[$(1)$] $\rk \ul M\oplus \ul M'=(\rk M)+(\rk \ul M')$ and $\dim M\oplus \ul M'=(\dim M)+(\dim \ul M')$.
\item[$(2)$] $\rk \ul M\ot \ul M'=(\rk \ul M)\cdot (\rk \ul M')$ and 
$\dim \ul M\ot \ul M'=(\rk \ul M')\cdot(\dim \ul M)+(\rk \ul M)\cdot (\dim \ul M')$.
\item[$(3)$] $\rk \det \ul M=1$ and $\dim \det\ul M=\dim \ul M$.
\end{itemize}
\end{proposition}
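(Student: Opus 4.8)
The plan is to reduce all three formulas to the elementary divisor theorem over the discrete valuation ring obtained by localizing $A_F$ at $J$, and then to read them off from the exponents.

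First I would dispose of the rank formulas, which are purely module-theoretic: since $\sig\ast(-)$ is base change along $\sig\colon A_F\to A_F$ it preserves local freeness and rank, rank is additive over $\op$, multiplicative over $\ot_{A_F}$, and $\wedge^r$ of a rank-$r$ locally free module is invertible. This gives $\rk(\ul M\op\ul M')=\rk\ul M+\rk\ul M'$, $\rk(\ul M\ot\ul M')=\rk\ul M\cdot\rk\ul M'$, and $\rk\det\ul M=1$.

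For the dimensions, I would first observe that $J$ is a maximal ideal of the Dedekind domain $A_F$ (because $A_F/J\cong F$), that its residue field is $F$ with $F\hookrightarrow A_F\thra A_F/J$ equal to the identity, and that $\Cok(\tau_M)$, being killed by a power of $J$, is supported only at $J$; hence $\dim\ul M=\dim_F\Cok(\tau_M)=\length_{(A_F)_J}\!\bigl((\Cok\tau_M)_J\bigr)$. Writing $\pi$ for a uniformizer of the DVR $(A_F)_J$, both $M_J$ and $(\sig\ast M)_J$ are free of rank $r$ over $(A_F)_J$ and $(\tau_M)_J$ is injective, so by the elementary divisor theorem there are bases in which $(\tau_M)_J$ is the diagonal matrix with entries $\pi^{a_1},\ldots,\pi^{a_r}$ for some $a_i\geq 0$, whence $\dim\ul M=a_1+\cdots+a_r$. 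Likewise $(\tau_{M'})_J$ is diagonal with entries $\pi^{b_1},\ldots,\pi^{b_{r'}}$ and $\dim\ul M'=b_1+\cdots+b_{r'}$.

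Finally I would take these bases compatibly: under the canonical identifications $\sig\ast(M\op M')=\sig\ast M\op\sig\ast M'$, $\sig\ast(M\ot_{A_F}M')=\sig\ast M\ot_{A_F}\sig\ast M'$, and $\sig\ast(\wedge^rM)=\wedge^r(\sig\ast M)$, the localized maps $(\tau_M\op\tau_{M'})_J$, $(\tau_M\ot\tau_{M'})_J$, and $(\wedge^r\tau_M)_J$ become diagonal with entry sets $\{\pi^{a_i}\}\cup\{\pi^{b_j}\}$, $\{\pi^{a_i+b_j}\}_{i,j}$, and $\{\pi^{a_1+\cdots+a_r}\}$. Summing exponents gives $\dim(\ul M\op\ul M')=\sum_i a_i+\sum_j b_j$, $\dim(\ul M\ot\ul M')=\sum_{i,j}(a_i+b_j)=r'\sum_i a_i+r\sum_j b_j$, and $\dim\det\ul M=\sum_i a_i$, which are exactly the claimed identities. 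The two points needing a little care are checking that $\dim_F\Cok(\tau_M)$ coincides with its $(A_F)_J$-length (via the residue-field identification above) and that the base-change identifications of $\sig\ast$ are compatible with $\op$, $\ot$, and $\wedge^r$ applied to the $\tau$'s; I would regard the latter compatibility as the only mild obstacle, the rest being routine.
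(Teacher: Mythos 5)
Your proof is correct and follows essentially the same route as the paper, which simply cites the elementary divisor theorem (with a reference to HJ20, p.~51): localizing (or completing) at the maximal ideal $J$ of the Dedekind domain $A_F$ turns $\tau_M$ into a map of free modules over a DVR, and the Smith normal form then gives all three dimension identities by adding exponents. The two ``points needing care'' you flag --- identifying $\dim_F\Cok(\tau_M)$ with the $(A_F)_J$-length, and the compatibility of $\sig\ast(-)$ with $\op$, $\ot$, and $\wedge$ --- are both standard and handled correctly.
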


Let $F\ra F'$ be a field embedding over $\bF_q$  and regard $F'$ as an $A$-field via $A\overset{\gamma}{\lra} F \ra F'$.
Then the induced ring homomorphism $A_F \ra A_{F'}$ commutes the diagram
\[
\xymatrix{
A_F \ar[r]\ar[d]_{\sig_F} & A_{F'}\ar[d]^{\sig_{F'}} \\
A_F \ar[r] & A_{F'}.
}
\]
Thus for an effective $A$-motive $\ul M=(M,\tau_M)$   over $F$,  we have 
\[
\sig\ast(M\ot_{A_F}A_{F'})\cong \sig\ast M \ot_{A_F} A_{F'} \overset{\tau_M\ot \id}{\lra} M\ot_{A_F}A_{F'},
\]
which provides the base change $\ul M\ot A_{F'}$ of $\ul M$ as follows:

\begin{proposition}
If $F$ and $F'$ be as above, then $\ul M=(M,\tau_M) \mapsto \ul M\ot A_{F'}:=(M\ot_{A_F}A_{F'}, \tau_M\ot\id)$ defines a functor from the category
of effective $A$-motives over $F$ of height $\leq h$ to that of effective $A$-motives over $F'$ of height $\leq h$ such that 
$\rk \ul{M}=\rk \ul{M}\ot A_{F'}$ and  $\dim\ul{M}=\dim\ul{M}\ot A_{F'}$.
\end{proposition}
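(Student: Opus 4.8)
The plan is to verify in turn that the pair $\ul M\ot A_{F'}=(M\ot_{A_F}A_{F'},\tau_M\ot\id)$ is a well-defined effective $A$-motive over $F'$ of height $\leq h$ with the asserted rank and dimension, and then that the assignment is compatible with morphisms, identities and composition. Throughout I would use that the structure map $A_F=A\ot_{\bF_q}F\ra A\ot_{\bF_q}F'=A_{F'}$ is faithfully flat, being obtained from the faithfully flat field extension $F\ra F'$ by base change along $\bF_q\ra A$. First I would spell out the canonical identification $\sig_{F'}\ast(M\ot_{A_F}A_{F'})\cong(\sig_F\ast M)\ot_{A_F}A_{F'}$ implicit in the commutative square displayed before the statement: from that square relating $\sig_F$ and $\sig_{F'}$ together with associativity of tensor products one gets $(M\ot_{A_F}A_{F'})\ot_{A_{F'},\sig_{F'}}A_{F'}=M\ot_{A_F,\sig_F}A_{F'}=(\sig_F\ast M)\ot_{A_F}A_{F'}$, under which $\tau_M\ot\id$ becomes a genuine $A_{F'}$-homomorphism $\sig_{F'}\ast(M\ot_{A_F}A_{F'})\ra M\ot_{A_F}A_{F'}$. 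Since $M$ is locally free of rank $r$ over $A_F$, its base change is locally free of rank $r$ over $A_{F'}$, whence $\rk(\ul M\ot A_{F'})=r=\rk\ul M$.

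Next I would analyze the cokernel. By right-exactness of $-\ot_{A_F}A_{F'}$ one has $\Cok(\tau_M\ot\id)\cong\Cok(\tau_M)\ot_{A_F}A_{F'}$, and writing $A_{F'}=A_F\ot_F F'$ this becomes $\Cok(\tau_M)\ot_F F'$; hence it is a finite-dimensional $F'$-vector space of the same dimension as $\Cok(\tau_M)$ over $F$, which gives $\dim(\ul M\ot A_{F'})=\dim\ul M$. For the height, I would observe $J_{F'}=J_F\cdot A_{F'}$, since the generators $a\ot1-1\ot\gamma(a)$ of $J_F$ are carried to the corresponding generators of $J_{F'}$; therefore $J_{F'}^h=J_F^h\cdot A_{F'}$, and because $J_F^h\subseteq A_F$ annihilates $\Cok(\tau_M)$, the extended ideal $J_F^h\cdot A_{F'}$ annihilates $\Cok(\tau_M)\ot_{A_F}A_{F'}$. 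Thus $J_{F'}^h\cdot\Cok(\tau_M\ot\id)=0$, so $\ul M\ot A_{F'}$ is of height $\leq h$; injectivity of $\tau_M\ot\id$ then follows either from flatness of $A_F\ra A_{F'}$ applied to $0\ra\sig_F\ast M\overset{\tau_M}{\lra}M$, or directly from the Remark after the definition of effective $A$-motives (a cokernel annihilated by a power of $J$ forces injectivity).

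Finally, for functoriality: given a morphism $f\colon\ul M\ra\ul N$, i.e.\ an $A_F$-homomorphism $f\colon M\ra N$ with $f\circ\tau_M=\tau_N\circ\sig_F\ast f$, applying $-\ot_{A_F}A_{F'}$ and using the identification above yields $(f\ot\id)\circ(\tau_M\ot\id)=(\tau_N\ot\id)\circ\sig_{F'}\ast(f\ot\id)$, so $f\ot\id$ is a morphism $\ul M\ot A_{F'}\ra\ul N\ot A_{F'}$; that identities go to identities and composites to composites is immediate from the corresponding properties of the base-change functor on $A_F$-modules. I do not anticipate a genuine obstacle here: every step is routine commutative algebra, and the only point demanding some care is the bookkeeping around the two Frobenii $\sig_F,\sig_{F'}$ and the identification of $J_{F'}$ with the extension of $J_F$ to $A_{F'}$.
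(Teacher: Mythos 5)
Your proposal is correct and takes essentially the same approach as the paper: the paper states the proposition without proof, relying on the Frobenius-compatibility diagram it has just displayed, and your argument simply fills in the routine base-change verifications (locally freeness, right-exactness for the cokernel, $J_{F'}=J_F\cdot A_{F'}$, flatness for injectivity, and functoriality) that the paper leaves implicit.
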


\subsection{$\frp$-adic realizations}

In what follows, let us consider effective $A$-motives defined over the $A$-field $(K, \gamma \colon A \ra K)$ as in $\S 1$ (in particular, $\gamma$ is injective).
Let $\frp$ be a maximal ideal of $A$.
For an effective $A$-motive $\ul M$ over $K$, we obtain $\frp$-adic  representations of 
$G_K$ as follows.
Let $A_{\frp,K^\sep}:=A_\frp \wh\ot_{\bF_q}K^\sep =\ilim A_{K^\sep}/\frp^n A_{K^\sep}$ be the $\frp$-adic completion of 
$A_{K^\sep}$.
Then the endomorphism $\sig$ of $A_{K^\sep}$ canonically extends to $\sig \col A_{\frp,K^\sep}\ra A_{\frp,K^\sep}$  
and so $\tau_M$ induces $\tau_M \col \sig \ast M \ot_{A_K} A_{\frp,K^\sep} \ra M \ot_{A_K} A_{\frp,K^\sep}$.
Note that  the $\sig$-invariant subring of $A_{\frp,K^\sep}$ is $(A_{\frp,K^\sep})^\sig=A_\frp$.
Then the {\it $\frp$-adic realization} of $\ul M$ is defined to be 
the $A_\frp$-module
\[
H_\frp^1(\ul M,A_\frp):=(M\ot_{A_K}A_{\frp,K^\sep})^\tau
:=\{ m \in M\ot_{A_K}A_{\frp,K^\sep}\mid \tau_M(\sig\ast m)=m \},
\]
where $\sig \ast m:=m\ot 1$ is the image by the natural map
$M\ot_{A_K}A_{\frp,K^\sep} \ra \sig\ast M\ot_{A_K}A_{\frp,K^\sep}$.
It is known that  $H_\frp^1(\ul M,A_\frp)$ is a free $A_\frp$-module of rank equal to $\rk \ul M$ with a continuous action of $G_K$; see \cite{TW96}.
Define the {\it rational $\frp$-adic realization} of $\ul M$ by 
\[
H_\frp^1(\ul M, Q_\frp):=H_\frp^1(\ul M, A_\frp)\ot_{A_\frp}Q_\frp.
\]
Taking dual of them, we define the {\it $\frp$-adic Tate module} and the {\it rational 
$\frp$-adic Tate module}  
of $\ul M$ by 
\[
T_\frp\ul M:=\Hom_{A_\frp}(H_\frp^1({\ul M}, A_\frp),A_\frp) \es \mbox{and}\es  V_\frp\ul M:=T_\frp\ul M \ot_{A_\frp} Q_\frp.
\]
It follows that  $\ul M \mapsto V_\frp\ul M$ defines a contravariant exact faithful tensor functor from the category of effective $A$-motives over $K$ to that of $\frp$-adic representations of $G_K$. 

\begin{remark}
Set $M_{\frp,K^\sep}=M\ot_{A_K}A_{\frp,K^\sep}$. 
Then $T_\frp\ul M$ can be defined directly by 
\[
T_\frp\ul M=\{f \in \Hom_{A_{\frp,K}}(M_{\frp,K^\sep}, A_{\frp,K^\sep}) \mid f(\tau_M(\sig\ast m))=\sig(f(m))\ \mbox{for}\ m \in M_{\frp,K^\sep}\}.
\]
If $\ul M$ comes from an abelian Anderson $A$-module $\ul E$ over $K$, then $T_\frp\ul M$ is canonically isomorphic to the $\frp$-adic Tate module $T_\frp\ul E=\ilim \ul E[\frp^n]$ of $\ul E$.
\end{remark}

\begin{remark}
For a finite place $v$ of $K$, the (rational) $\frp$-adic Tate modules of effective $A$-motives over $K_v$ are defined in the same way.
By construction, for an effective $A$-motive $\ul M$ over $K$, we have $T_\frp\ul M|_{G_v} \cong T_\frp(\ul M\ot A_{K_v})$ and $V_\frp\ul M|_{G_v}\cong V_\frp(\ul M\ot A_{K_v})$.
\end{remark}
To review the notion of good reduction for effective $A$-motives, let $v$ be a finite place of $K$ and regard $K_v$ as an $A$-field via $\gamma\colon A \ra K \subset K_v$.
Since this map factors through the valuation ring $\cO_v$, the residue field $\bF_v$ also becomes an $A$-field via
$A \overset{\gamma}{\lra} \cO_v \thra \bF_v$.
We set $A_{\cO_v}=A\ot_{\bF_q}\cO_v$.

\begin{definition}
Let $\ul M$ be an effective $A$-motive over $K_v$.

\begin{itemize}
\item[(1)] A {\it model} of $\ul{M}$ is a pair $\ul{\cM}=(\cM,\tau_{\cM})$ consisting of a finite locally free $A_{\cO_v}$-module and an injective $A_{\cO_v}$-homomorphism $\tau_{\cM}\col \sig\ast \cM \ra \cM$ such that there is an isomorphism $\iota \col  M \rai  \cM\ot_{A_{\cO_v}} A_{K_v}$ with $\tau_{\cM}\circ \sig\ast\iota=\iota \circ \tau_{M}$.
\item[(2)] A model $\ul{\cM}$ of $\ul{M}$ is called a {\it good model} if the induced $A_{\bF_v}$-homomorphism
\[
\tau_{\cM}\ot \id \colon \sig\ast\cM\ot_{A_{\cO_v}}A_{\bF_v}\lra  \cM\ot_{A_{\cO_v}}A_{\bF_v}
\]
is injective.
In this case, we say that $\ul{M}$ has {\it good reduction}.
\end{itemize}
We say that an effective $A$-motive $\ul M$ over $K$ has {\it good reduction at $v$} if so does $\ul M\ot K_v$.
\end{definition}

\begin{remark}
For a model $(\cM,\tau_{\cM})$ of an effective $A$-motive $\ul M$ over $K_v$, \cite[Theorem 4.7]{HH16} implies that it is a good model if and only if $\Cok(\tau_{\cM})$ is a finite free $\cO_v$-module annihilated by a power of $J_{\cO_v}$, where  $J_{\cO_v}$ is the ideal of $A_{\cO_v}$ generated by $\{a\ot 1-1\ot \gamma(a) \mid a\in A\}$. 
Therefore, in this case, the pair $(\cM\ot_{A_{\cO_v}}A_{\bF_v},\tau_{\cM}\ot\id)$ becomes an effective $A$-motive over $\bF_v$ which has the same rank and dimension of $\ul M$. 
Note that by \cite[Lemma 2.10]{Ga03} an effective $A$-motive $\ul M$ over $K$ has good reduction at almost all finite places $v$. 
\end{remark}

As an analog of the N\'eron-Ogg-Shafarevich criterion for abelian varieties, Gardeyn \cite{Ga02} proves the following theorem:

\begin{theorem}[Gardeyn]
Let $v$ be a finite place of $K$.
For an effective $A$-motive $\ul{M}$ over $K$, the following statements are equivalent.
\begin{itemize}
\item[$(1)$] $\ul{M}$ has good reduction at $v$.
\item[$(2)$] For all maximal ideal $\frp$ of $A$ with $v\nmid \frp$, the  $\frp$-adic representation $V_\frp \ul{M}$ is unramified at $v$.
\item[$(3)$] There is a maximal ideal $\frp$
with $v\nmid \frp$ such that $V_\frp\ul{M}$ is unramified at $v$. 
\end{itemize}
\end{theorem}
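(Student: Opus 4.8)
The plan is to prove the cycle $(1)\Rightarrow(2)\Rightarrow(3)\Rightarrow(1)$, with essentially all the content in the last implication, for which I would ultimately appeal to \cite{Ga02}. For $(1)\Rightarrow(2)$, fix a good model $\ul{\cM}=(\cM,\tau_{\cM})$ of $\ul M\ot K_v$ over $A_{\cO_v}$. Write $K_v^{\ur}$ for the maximal unramified extension of $K_v$, $\cO_v^{\ur}$ for its valuation ring, and $A_{\frp,\cO_v^{\ur}}$ for the $\frp$-adic completion of $A\ot_{\bF_q}\cO_v^{\ur}$, a subring of $A_{\frp,K_v^{\sep}}$ on which $I_v=\Gal(K_v^{\sep}/K_v^{\ur})$ acts trivially. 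Since $v\nmid\frp$, the ideals $J_{\cO_v}$ and $\frp A_{\cO_v}$ are comaximal, so $\Cok(\tau_{\cM})$, being supported on $V(J_{\cO_v})$, vanishes upon $\frp$-adic completion; hence $\tau_{\cM}$ becomes an isomorphism over $A_{\frp,\cO_v^{\ur}}$. A unit-root (Katz-type) argument then shows that $H^1_\frp(\ul M\ot K_v,A_\frp)=(\cM\ot_{A_{\cO_v}}A_{\frp,K_v^{\sep}})^\tau$ already equals $(\cM\ot_{A_{\cO_v}}A_{\frp,\cO_v^{\ur}})^\tau$, a free $A_\frp$-module on which $I_v$ acts trivially; dualizing gives that $V_\frp\ul M$ is unramified at $v$. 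The implication $(2)\Rightarrow(3)$ is immediate: $v$ lies above a unique maximal ideal of $A$, so any other $\frp$ suffices.

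For $(3)\Rightarrow(1)$, after replacing $\ul M$ by $\ul M\ot K_v$ it is enough to produce a good model over $A_{\cO_v}$, given that $T:=T_\frp\ul M$ is unramified for some $\frp$ with $v\nmid\frp$. The $\frp$-adic period isomorphism provides a canonical $\tau$- and $G_v$-equivariant isomorphism $M\ot_{A_K}A_{\frp,K_v^{\sep}}\cong H^1_\frp(\ul M,A_\frp)\ot_{A_\frp}A_{\frp,K_v^{\sep}}$. As $H^1_\frp(\ul M,A_\frp)$ (equivalently $T$) is unramified, this isomorphism already descends to $A_{\frp,\cO_v^{\ur}}$, and then Galois descent along $\cO_v^{\ur}/\cO_v$ (using the vanishing of the relevant $\mathrm{H}^1$ of $\Gal(\bF_v^{\sep}/\bF_v)=\Gal(K_v^{\ur}/K_v)$ in $\GL_r$) produces a finite locally free $A_{\cO_v}$-lattice $\cM\subset M$, stable under $\tau_M$ since the period isomorphism is $\sig$-equivariant. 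Thus $\ul{\cM}=(\cM,\tau_{\cM})$ is a model of $\ul M$. That it is a \emph{good} model is then checked via \cite[Theorem 4.7]{HH16}: one needs $\Cok(\tau_{\cM})$ to be a finite free $\cO_v$-module killed by a power of $J_{\cO_v}$. The generic cokernel is $\Cok(\tau_M)$, which is $J^h$-torsion for some $h$; combining this with the comaximality of $\frp$ and $J_{\cO_v}$ and with the fact that $\cM$ descends an \emph{unramified} lattice forces $\Cok(\tau_{\cM})$ to remain supported on $V(J_{\cO_v})$ and $\cO_v$-flat there.

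The main obstacle is precisely this last verification --- ruling out a ``vertical'' summand of $\Cok(\tau_{\cM})$ concentrated at the closed point of $\Spec\cO_v$, i.e., showing that descending an unramified Tate module yields a \emph{good} model and not merely a model. This is the heart of Gardeyn's N\'eron--Ogg--Shafarevich criterion for $\tau$-sheaves; in the write-up I would deduce the statement from \cite{Ga02} (transported from analytic $\tau$-sheaves to effective $A$-motives via the dictionary recalled above) rather than reproduce the model-theoretic machinery in full.
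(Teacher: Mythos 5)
The paper provides no proof of this theorem at all: it states the result and attributes it entirely to Gardeyn's paper \cite{Ga02}. Your write-up is therefore more detailed than anything in the paper, but in the end you also rest on \cite{Ga02} for the only genuinely non-trivial implication $(3)\Rightarrow(1)$, so the overall strategy is consistent with the paper's own (purely citational) treatment.

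Your argument for $(1)\Rightarrow(2)$ is correct and is the standard route: for $v\nmid\frp$ the ideals $J_{\cO_v}$ and $\frp A_{\cO_v}$ are comaximal (since the preimage of the maximal ideal of $\cO_v$ in $A$ is the prime below $v$, which differs from $\frp$), so $\Cok(\tau_\cM)$ dies under $\frp$-adic completion and $\tau_\cM$ becomes an isomorphism over $A_{\frp,\cO_v}$; the Lang/unit-root argument then places $H^1_\frp$ inside the $I_v$-invariants. $(2)\Rightarrow(3)$ is indeed immediate. For $(3)\Rightarrow(1)$ your sketch is incomplete in one additional place beyond the one you flag: the period isomorphism and Galois descent along $\cO_v^\ur/\cO_v$ only produce, a priori, a $\tau$-stable lattice in the $\frp$-adic completion $M\ot_{A_K}A_{\frp,K_v}$, not a coherent finite locally free $A_{\cO_v}$-lattice in $M\ot_{A_K}A_{K_v}$; spreading a $\frp$-adic lattice out to a model over all of $\Spec A_{\cO_v}$ is itself part of the work, and the passage through the analytification / analytic $\tau$-sheaf machinery is precisely how Gardeyn handles it. You do acknowledge that the heart of the proof lies in that direction and that you are deferring to \cite{Ga02}, so I have no objection; just note that the gap you should be citing Gardeyn to close is slightly wider than ``model $\Rightarrow$ good model.''
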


For 
 a finite place $v$ of $K$ with $v\nmid \frp$ and 
 an effective $A$-motive $\ul M$ with good reduction at $v$, 
 the  characteristic polynomial
\[
P_{\ul M,v}(X)=\det(X-\Frob_v \mid V_\frp\ul M)
\] 
of $\Frob_v$ is well-defined.

\begin{proposition}
If $\ul M$ has good reduction at $v\nmid \frp$, then $P_{\ul M,v}(X)$ has coefficients in $A$
which are independent of $\frp$.
\end{proposition}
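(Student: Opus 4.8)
The plan is to pass to the residue field $\bF_v$, where the reduced motive carries a canonical Frobenius endomorphism, and to show that $P_{\ul M,v}(X)$ is nothing but the characteristic polynomial of that endomorphism; over $\bF_v$ the latter is manifestly integral over $A$ and involves no choice of $\frp$. Concretely, fix a good model $\ul\cM=(\cM,\tau_\cM)$ of $\ul M\ot A_{K_v}$ over $A_{\cO_v}$ and set $\ul{\bar M}:=\ul\cM\ot_{A_{\cO_v}}A_{\bF_v}=(\bar M,\tau_{\bar M})$, an effective $A$-motive over $\bF_v$ of rank $r$. Since $v\nmid\frp$, the ideal $J$ becomes the unit ideal after $\frp$-adic completion, so $\tau_\cM$ is an isomorphism after $\frp$-adic completion; hence $H^1_\frp(\ul M,A_\frp)$ is computed over the maximal unramified extension of $\cO_v$ and its reduction recovers the $\frp$-adic realization of the special fibre, yielding a canonical $G_{\bF_v}$-equivariant isomorphism $H^1_\frp(\ul M,A_\frp)\cong H^1_\frp(\ul{\bar M},A_\frp)$ on which $G_v$ acts through $G_v\thra G_{\bF_v}$ (this is essentially the content of \cite{TW96}, combined with the good-reduction compatibility recalled in \S3.2). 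In particular $P_{\ul M,v}(X)=\det(X-\Frob_v\mid V_\frp\ul{\bar M})$, so we may assume $\ul M=\ul{\bar M}$ is defined over $\bF_v$.

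Next I would build the Frobenius endomorphism and reduce $P_{\ul M,v}(X)$ to it. Write $q_v=q^{d_v}=\#\bF_v$; since the $q_v$-power map is the identity on $\bF_v$, we have $\sig^{d_v}=\id_{A_{\bF_v}}$, and the $d_v$-fold composite
\[
\pi_v:=\tau_{\bar M}\circ(\sig\ast\tau_{\bar M})\circ\cdots\circ\bigl((\sig^{d_v-1})\ast\tau_{\bar M}\bigr)\col (\sig^{d_v})\ast\bar M=\bar M\lra\bar M
\]
is an $A_{\bF_v}$-linear endomorphism of $\bar M$ with finite cokernel that commutes with $\tau_{\bar M}$, hence an isogeny $\ul{\bar M}\ra\ul{\bar M}$. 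Iterating the relation $\tau_{\bar M}(\sig\ast m)=m$ defining $H^1_\frp(\ul{\bar M},A_\frp)\subset\bar M\ot_{A_{\bF_v}}(A_\frp\wh\ot_{\bF_q}\bF_v^\sep)$ exactly $d_v$ times, and using that $(\sig^{d_v})\ast$ acts on the coefficient ring $A_\frp\wh\ot_{\bF_q}\bF_v^\sep$ by the arithmetic Frobenius substitution, one finds $\Frob_v=V_\frp(\pi_v)$ on $V_\frp\ul{\bar M}$; since a linear operator and its transpose have the same characteristic polynomial, this gives $P_{\ul M,v}(X)=\det\bigl(X-\pi_v\mid H^1_\frp(\ul{\bar M},Q_\frp)\bigr)$.

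It remains to compute this characteristic polynomial over $A_{\bF_v}$. Because $\tau_{\bar M}$ is an isomorphism $\frp$-adically, the pair $(\bar M\ot_{A_{\bF_v}}(A_\frp\wh\ot_{\bF_q}\bF_v^\sep),\tau_{\bar M}\ot\id)$ is \'etale, and \cite{TW96} provides a trivialization $\bar M\ot_{A_{\bF_v}}(A_\frp\wh\ot_{\bF_q}\bF_v^\sep)\cong H^1_\frp(\ul{\bar M},A_\frp)\ot_{A_\frp}(A_\frp\wh\ot_{\bF_q}\bF_v^\sep)$ carrying $\pi_v\ot\id$ to the endomorphism induced by $\pi_v$ on $H^1_\frp$. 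Comparing characteristic polynomials after this faithfully flat base change and descending, $P_{\ul M,v}(X)=\det(X-\pi_v\mid\bar M\ot_{A_{\bF_v}}\operatorname{Frac}(A_{\bF_v}))$, a polynomial computed entirely inside $A_{\bF_v}[X]$ and therefore independent of $\frp$. Finally, from $\pi_v\circ\tau_{\bar M}=\tau_{\bar M}\circ(\sig\ast\pi_v)$ and the fact that $\tau_{\bar M}$ becomes an isomorphism over $\operatorname{Frac}(A_{\bF_v})$, the operator $\sig\ast\pi_v$ is conjugate to $\pi_v$ there; hence this characteristic polynomial is fixed by the coefficientwise action of $\sig$, so it lies in $(A_{\bF_v})^\sig[X]=A[X]$.

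I expect the genuinely substantive points to be the compatibility of the $\frp$-adic realization with reduction modulo $v$ in the first step and the precise identification $\Frob_v=V_\frp(\pi_v)$ in the second; both depend on the normalization of the Galois action relative to $\sig$ and $\tau$, and I would extract them from \cite{TW96} rather than re-derive them. Everything else — the manipulations of characteristic polynomials and the descent along the faithfully flat $\frp$-adic coefficient ring — is routine.
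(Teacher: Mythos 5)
Your proof is correct, but the route is different from the paper's, which disposes of this Proposition in one line by citing Gardeyn \cite[Theorem 7.3]{Ga02}. Your argument is instead a self-contained proof, and in substance it is the argument the paper gives later for Lemma \ref{lem.charpoly} in \S6.1 (the version of this statement for effective $A$-motives already over $\bF_v$), preceded by the unramifiedness reduction $H^1_\frp(\ul M,A_\frp)\cong H^1_\frp(\ul{\bar M},A_\frp)$. The overlap is the construction of the iterated Frobenius $\pi_v=\tau^{d_v}$ and the appeal to \cite[\S6]{TW96} for identifying the resulting characteristic polynomial with that of $\phi_v$ on the Tate module (your intermediate claim $\Frob_v=V_\frp(\pi_v)$ is correct with the paper's conventions, since $T_\frp=\Hom_{A_\frp}(H^1_\frp,A_\frp)$ carries the dual Galois action). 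Where you diverge is in establishing $A$-rationality: the paper proves Lemma \ref{lem.charpoly} by invoking \cite[Lemma-Definition 8.1.1]{BP09} to define $\det(\id-x\tau\mid\bar M)\in A[x]$ directly (treating the $\sig$-semilinear $\tau$ as an $A$-linear endomorphism of $\bar M$, which is finite projective over $A$ since $A_{\bF_v}/A$ is finite flat), together with \cite[Lemma 8.1.4]{BP09} to relate it to the $A_{\bF_v}$-linear characteristic polynomial of $\tau^{d_v}$. Your alternative — that $\tau_{\bar M}$ becomes an isomorphism over $\operatorname{Frac}(A_{\bF_v})$ and conjugates $\sig\ast\pi_v$ to $\pi_v$, so the characteristic polynomial is $\sig$-invariant and hence lies in $(A_{\bF_v})^\sig[X]=A[X]$ — is a clean elementary substitute for that citation. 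Both yield the same conclusion; yours avoids the BP09 formalism for semilinear characteristic polynomials, at the cost of an explicit descent computation, while the paper keeps \S3.2 short by delegating to references.
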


\begin{proof}
This is a special case of \cite[Theorem 7.3]{Ga02}.
\end{proof}

We will recall the notion of {\it purity} 
for $A$-motives.
Let $\infty_K$ be  a place of $K$ lying above $\infty$, and denote by  $\bC:=\wh{\ol{K}}_{\infty_K}$ 
 a completion of an algebraic closure of a completion of $K$ at $\infty_K$.
We fix an embedding $K \ra \bC$ 
and  regard $\bC$ as an $A$-field via $\gamma\colon A \overset{\gamma}\lra K  \ra \bC$.
Choose a uniformizer $z_\infty \in Q$ at $\infty$.
Since we now assume $\bF_\infty=\bF_q$, we  have $Q_\infty=\bF_q\izp$ and $A \subset Q_\infty$, 
so that we have  $Q_\infty\wh{\ot}_{\bF_q}\bC=\bC\izp$ and an injection $A_{\bC} \hra \bC\izp$.
The endomorphism $\sig$ of $A_{\bC}$ extends to $\sig\colon \bC\izp \ra \bC\izp$ by $\sig(z_\infty)=z_\infty$
and $\sig(\lam)=\lam^q$ for $\lam\in \bC$.

\begin{definition}
An effective $A$-motive $\ul M$ over $\bC$ is said to be {\it pure} if 
there is a $\bC\izb$-lattice $M_\infty$ of $M\ot_{A_{\bC}}\bC\izp$ such that  for some integers $d,r$ with $r>0$, the 
map $\tau_M^r:=\tau_M\circ \sig\ast \tau_M\circ \cdots \circ \sig^{(r-1)*}\tau_M\colon \sig^{r*}M\ra M$ induces an isomorphism 
\[
z^d_\infty\tau_M^r\colon \sig^{r*}M_\infty \overset{\sim}{\lra} M_\infty.
\]
In this case, the {\it weight} of $\ul M$ is defined by $\Wt {\ul M}:=\frac{d}{r}$.
An effective $A$-motive $\ul M$ over $K$ is said to be {\it pure} 
if so is $\ul M\ot{A_\bC}$.
\end{definition}

\begin{example}
Let $\ul E$ be a Drinfeld $A$-module over $K$ of rank $r$.
Then the associated effective $A$-motive $\ul M(\ul E)$ is pure of weight $\frac{1}{r}$.
\end{example}

\begin{proposition}\label{prop.pureweight}
Let $\ul M$ and $\ul M'$ be effective pure $A$-motives over $K$.
\begin{itemize}
\item[$(1)$] The weight of $\ul M$ is $\Wt \ul M=(\dim \ul M)/(\rk \ul M)$. 
\item[$(2)$] The tensor product $\ul M \ot \ul M'$ is  pure of weight $(\Wt \ul M)+(\Wt \ul M')$.
\item[$(3)$] If $\ul M$ has good reduction at  a finite place $v$ of $K$, then any root $\alpha \in \bar Q$ of $P_{\ul M,v}(X)$ satisfies
$|\alpha|_\infty=q_v^{\Wt \ul M}$.
\end{itemize} 
\end{proposition}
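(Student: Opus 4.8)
The plan is to reduce everything to statements over $\bC$ and then extract the weight from the Frobenius structure of the lattice $M_\infty$. For part (1), I would base change $\ul M$ to $\bC$ and use the defining isomorphism $z_\infty^d \tau_M^r \colon \sig^{r*}M_\infty \rai M_\infty$. Taking $\wedge^{\rk \ul M}$ of this isomorphism gives a purity structure on $\det \ul M$ of weight $d/r$ on the rank-one lattice $\wedge^{\rk\ul M} M_\infty$. On the other hand, for a rank-one effective $A$-motive over $\bC$ one can compute the weight directly: the cokernel of $\tau$ has $\bC$-dimension equal to $\dim\det\ul M = \dim \ul M$ (using Proposition \ref{prop.rkdim}(3)), and comparing the $z_\infty$-adic valuations of a generator of $\sig^{r*}M_\infty$ and its image under $\tau^r$ shows that the total order of vanishing is $\dim\ul M$ over a length-$r$ Frobenius orbit, whence $d = \dim\ul M$ and $\Wt\ul M = (\dim\ul M)/(\rk\ul M)$. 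The key local input is that $\Cok(\tau_M)$ is supported at $J$, so away from $\infty$ it contributes nothing to the $z_\infty$-adic picture, and the only contribution to $d$ comes from $\dim_{\bC}\Cok(\tau_M)$ counted along the orbit.

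For part (2), I would take $M_\infty$ and $M'_\infty$ to be the lattices witnessing purity of $\ul M$ and $\ul M'$ of weights $d/r$ and $d'/r'$ respectively; after replacing $r,r'$ by their least common multiple $R$ (and $d,d'$ by $dR/r$, $d'R/r'$) we may assume $r = r' = R$. Then $M_\infty \ot_{\bC\izb} M'_\infty$ is a $\bC\izb$-lattice of $(M\ot_{A_{\bC}}M')\ot \bC\izp$, and since $\tau_{M\ot M'}^R = \tau_M^R \ot \tau_{M'}^R$ under the canonical identification $\sig^{R*}(M\ot M') \cong \sig^{R*}M \ot \sig^{R*}M'$, the map $z_\infty^{d+d'}\tau_{M\ot M'}^R$ is the tensor product of the two isomorphisms $z_\infty^d\tau_M^R$ and $z_\infty^{d'}\tau_{M'}^R$, hence an isomorphism $\sig^{R*}(M_\infty\ot M'_\infty)\rai M_\infty\ot M'_\infty$. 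This shows $\ul M\ot\ul M'$ is pure of weight $(d+d')/R = d/r + d'/r'$.

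For part (3), suppose $\ul M$ has good reduction at a finite place $v \nmid \infty$. By part (1) of Proposition \ref{prop.pureweight} together with Proposition \ref{prop.rkdim}, the determinant $\det \ul M$ is a rank-one effective $A$-motive over $K$ with good reduction at $v$, dimension $\dim\ul M$, and $P_{\det\ul M, v}(X) = \det(X - \Frob_v \mid V_\frp\det\ul M) = X - \beta$ where $\beta \in A$ is (up to the usual sign) the product of the roots $\alpha_1,\dots,\alpha_r$ of $P_{\ul M,v}(X)$; here I use that $V_\frp$ is a tensor functor so $V_\frp\det\ul M = \det V_\frp\ul M$. Purity of weight $w := \Wt\ul M$ at $\infty$ forces a compatibility between the Frobenius eigenvalue at $v$ and the $\infty$-adic size of $\beta$: by the standard argument (going back to Anderson) comparing the good model of $\det\ul M$ at $v$ with the $\infty$-adic lattice $M_\infty$, one gets $|\beta|_\infty = q_v^{d}$ where $d = \dim\ul M = r\cdot w$. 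Finally, the Galois group acts on $V_\frp\ul M$ so that all conjugates of $\alpha_1$ are again roots of $P_{\ul M,v}(X)$, and the purity constraint is symmetric in the $\alpha_i$; a weight argument (e.g.\ applying the same reasoning to tensor powers $\ul M^{\ot n}$ and letting $n\to\infty$, or using that the Newton polygon of $P_{\ul M,v}$ at $\infty$ is forced to be a single segment of slope $w\cdot d_v$ by the isomorphism $z_\infty^{nd}\tau^{nr}\colon \sig^{nr*}M_\infty\rai M_\infty$) shows each individual root satisfies $|\alpha_i|_\infty = q_v^{w} = q_v^{\Wt\ul M}$.

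I expect the main obstacle to be part (3): turning the global $\infty$-adic purity structure into a pointwise statement about each Frobenius eigenvalue at $v$, rather than just about their product. The cleanest route is probably to invoke that the roots of $P_{\ul M,v}$ are permuted transitively enough (via the tensor powers, whose weights are $n\cdot w$ by part (2), combined with part (1) applied to $\ul M^{\ot n}$) to pin down the common $\infty$-adic absolute value; making this pin-down rigorous — i.e.\ ruling out that the Newton polygon at $\infty$ has more than one slope — is the delicate point, and it is exactly where the pure hypothesis (as opposed to mere existence of good reduction) is used.
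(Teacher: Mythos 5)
The paper does not prove this proposition; it cites \cite[Proposition 2.3.11]{HJ20} for (1)--(2) and \cite[Proposition 2.3.36]{HJ20} for (3). You have attempted an actual re-derivation. Part (2) is fine: after reducing to a common period $R$, the identity $\tau_{M\ot M'}^R = \tau_M^R\ot\tau_{M'}^R$ together with the tensor product of the two purity isomorphisms gives the claim. But (1) and (3) both have genuine holes.

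For (1), the reduction to rank one via determinants is a sound idea, but the key step --- that for rank-one $\ul N$ the purity exponent $e$ with $z_\infty^e\tau_N^r\colon\sig^{r*}N_\infty\rai N_\infty$ satisfies $e/r=\dim\ul N$ --- is not the ``valuation comparison'' you describe. The cokernel of $\tau_N$ is supported at the finite ideal $J$, while $N_\infty$ lives in the completion at $\infty$; these are different places, and $\tau_N$ is already an isomorphism after tensoring with $\bC\izp$. Connecting the $\infty$-adic slope of the purity datum to $\dim_\bC\Cok(\tau_N)$ requires extending $\ul N$ to a $\tau$-sheaf on the complete curve and invoking a degree/Riemann--Roch identity, which is the real content of \cite[Proposition 2.3.11]{HJ20} and is absent from your sketch. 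There is also a slip: taking $\wedge^{\rk\ul M}$ multiplies the purity exponent by $\rk\ul M$, so it exhibits $\det\ul M$ as pure of weight $\rk\ul M\cdot d/r$, not $d/r$. You implicitly use the correct version two sentences later, so this reads as a typo, but it should be fixed.

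For (3), you identify the crux yourself --- passing from $|\alpha_1\cdots\alpha_r|_\infty=q_v^{r\cdot\Wt\ul M}$ to the individual $|\alpha_i|_\infty=q_v^{\Wt\ul M}$ --- but neither proposed fix closes it. Galois conjugacy over $Q$ does not constrain $|\cdot|_\infty$: the extension of $|\cdot|_\infty$ to $\bar Q$ is a fixed choice, not Galois-invariant, so conjugate roots can have distinct $\infty$-adic absolute values. And the tensor/exterior-power identities carry no new information: for $\wedge^k\ul M$, pure of weight $k\cdot\Wt\ul M$ and rank $\binom{r}{k}$, the resulting determinant constraint is $\prod_i|\alpha_i|_\infty^{\binom{r-1}{k-1}}=q_v^{k\binom{r}{k}\Wt\ul M}$, which is automatic from the $k=r$ case by the identity $r\binom{r-1}{k-1}=k\binom{r}{k}$; the same happens for $\ul M^{\ot n}$ and $\Sym^n\ul M$. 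Isoclinicity of the $\infty$-adic Newton polygon of $P_{\ul M,v}$ for pure $\ul M$ is a statement about the reduction of $\ul M$ over $\bF_v$, proved by a Dieudonn\'e-theoretic slope argument for $\sigma$-modules over a finite field --- precisely what \cite[Proposition 2.3.36]{HJ20} supplies and what your sketch lacks.
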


\begin{proof}
 (1) and (2) are well-known; see \cite[Proposition 2.3.11]{HJ20} for example.
 For a maximal ideal $\frp$ of $A$ with $v\nmid \frp$,
each eigenvalue of $\Frob_v\inv$ on $H_\frp^1(\ul M,A_\frp)$ has the absolute value $q_v^{\Wt \ul M}$
by \cite[Proposition 2.3.36]{HJ20}, which implies  (3).
\end{proof}

%

\begin{lemma}\label{lem.purerkone}
Under the assumption $\bF_\infty=\bF_q$,
any rank-one effective $A$-motive over $K$ is pure.
In particular, the determinant $\det \ul M$ of an effective $A$-motive $\ul M$ is pure of weight equal to $\dim\ul M$.
\end{lemma}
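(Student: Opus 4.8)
The plan is to unwind the definition of purity directly in the rank-one case. Let $\ul M=(M,\tau_M)$ have $\rk\ul M=1$, and base change to $\bC$; I may assume $M\ot_{A_K}A_{\bC}$ is free of rank one over the Dedekind domain $A_{\bC}$ (rank-one locally free modules over a Dedekind domain are in general only projective, so I should first reduce to the free case: since $A_{\bC}$ has trivial Picard group, because $\bC$ is algebraically closed and one can compute $\Pic(A_{\bC})$, or alternatively pass to the fraction field and clear denominators — this point needs a line of justification). Then $\tau_M\colon \sig^*M\to M$ becomes, after choosing a generator, multiplication by some element $f\in A_{\bC}$, which is nonzero since $\tau_M$ is injective, and whose ``zero divisor'' is supported on $V(J)$ with multiplicity $\dim\ul M$ by the height/cokernel condition. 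Now extend scalars along $A_{\bC}\hookrightarrow \bC\izp$ as in the setup preceding the lemma; I want to produce a $\bC\izb$-lattice $M_\infty$ of $M\ot_{A_{\bC}}\bC\izp$ and an integer $d$ with $z_\infty^{d}\tau_M\colon \sig^*M_\infty\xrightarrow{\sim}M_\infty$ (taking $r=1$).

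The key computation is to determine the order of $f$ at the place $\infty$, i.e.\ the $z_\infty$-adic valuation of the image of $f$ in $\bC\izp$. Here I would use that $\deg$ (as a function $Q^\times\to\bZ$ attached to $\infty$, equivalently $-\ord_\infty$) extended to $A_{\bC}$-``elements'' is controlled: the point is that $\tau_M$, $\sig^*\tau_M$, etc.\ all have the ``same size'' because $\sig$ fixes $z_\infty$ and acts as $q$-power Frobenius on $\bC$, which does not change $\infty$-adic absolute values (the residue field at $\infty$ is $\bF_q$, so Frobenius is an isometry there). Concretely, the cokernel $\Cok(\tau_M)$ has dimension $d=\dim\ul M$ over $\bC$ and is supported at $J$; one computes that the divisor of $f$ on $\Spec A_{\bC}$ has degree $d\cdot d_{\infty}$ worth of... — more carefully, since the only ``missing'' point of $\Spec Q$ relative to $\Spec A$ is $\infty$ and $\deg\infty=1$ by hypothesis $\bF_\infty=\bF_q$, the product formula forces $\ord_\infty(f)=-\dim\ul M$ up to a unit adjustment, so that $z_\infty^{\dim\ul M} f$ is a unit in $\bC\izp$. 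Setting $M_\infty$ to be the $\bC\izb$-span of the chosen generator and $d=\dim\ul M$, $r=1$, the map $z_\infty^{d}\tau_M$ is then multiplication by a unit of $\bC\izb$, hence an isomorphism $\sig^*M_\infty\xrightarrow{\sim}M_\infty$. This shows $\ul M$ is pure of weight $d/1=\dim\ul M$. For the ``in particular'' clause, apply this to $\det\ul M$, which has rank one and dimension $\dim\ul M$ by Proposition 3.x(3), giving purity of weight $\dim\ul M$.

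The main obstacle I expect is making the $\infty$-adic order computation rigorous: I need to track carefully how the ideal $J_{\bC}\subset A_{\bC}$ and the element $f$ look after completion at $\infty$, and confirm that $\ord_\infty(f)=-\dim_{\bC}\Cok(\tau_M)$ exactly (not merely up to a bounded error), which is where the hypothesis $\bF_\infty=\bF_q$ — ensuring $\deg\infty=1$ and that $\sig$ is an $\infty$-adic isometry — is essential. A secondary technical point is the reduction to $M\ot A_{\bC}$ being free; if one prefers to avoid a $\Pic(A_{\bC})$ computation, one can instead work with an isogeny from $\ul M$ to a rank-one $A$-motive with free underlying module (isogenies preserve purity and weight, as tensoring with $Q_\frp$ is exact and the Tate modules agree up to finite index), reducing to the free case. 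I would also remark that once purity of $\det\ul M$ is established, Proposition 3.x(1) re-derives $\Wt\det\ul M=(\dim\det\ul M)/(\rk\det\ul M)=\dim\ul M$, consistent with the above.
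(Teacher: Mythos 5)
Your approach is genuinely different from the paper's, and the core computation is sound, but there is a real gap in the reduction step. The paper does not compute the $\infty$-adic order of anything directly; it instead cites Hartl--Juschka [HJ20, Examples 2.3.6 and 2.3.9] for the existence of an isogeny $\ul M\ot A_\bC\to\ul N^{\ot d}$ onto the $d$-th tensor power of a Carlitz--Hayes $A$-motive $\ul N$ (a pure rank-one $A$-motive with $\tau_N(\sig\ast N)=J\cdot N$), and then cites [HJ20, Prop.\ 2.3.11(d)] for isogeny invariance of purity. Your argument unwinds the definition of purity instead, which is a nice, more self-contained route and does isolate the right mechanism: in the free case, if $\tau_M$ is multiplication by $f$, then $(f)=J_\bC^{\dim\ul M}$ since $\Cok(\tau_M)$ is a $\dim\ul M$-dimensional $J_\bC$-primary quotient and $J_\bC$ has residue field $\bC$ (degree one), whence $\ord_{\infty_\bC}(f)=-\dim\ul M$ because a principal divisor on the smooth projective model has degree zero and $\infty_\bC$ is the unique point at infinity (this is exactly where $\bF_\infty=\bF_q$ enters), and $\sig$ fixes $z_\infty$ so $z_\infty^{\dim\ul M}\tau_M$ is a unit times the identity on the obvious $\bC\izb$-lattice.

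The gap is your reduction to the free case. The claim that $\Pic(A_\bC)=0$ is false in general: $\Pic(A_\bC)\cong\Pic^0(C_\bC)$ for the smooth projective model $C_\bC$, which is trivial only when $C$ has genus zero, i.e.\ essentially only when $A=\bF_q[t]$. ``Passing to the fraction field and clearing denominators'' also does not help, since it destroys the integral structure rather than providing a $\tau$-compatible free model. Your fallback --- produce a $\tau$-compatible isogeny from $\ul M\ot A_\bC$ to a rank-one $A$-motive with free underlying module --- is the right idea and is in fact what [HJ20] supplies via the Carlitz--Hayes classification, but you do not construct such an isogeny, and its existence is not automatic: rescaling $M$ by a fractional ideal $\frb$ to make it principal must be compatible with $\tau_M$ (one needs $\sig(\frb)\tau_M(\sig\ast M)\subset\frb M$), which requires an argument. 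To make your proof complete, you should either (a) carry out the divisor computation directly on the line bundle level, using that $\Spec\sig$ is an isomorphism of the underlying $\bF_q$-scheme (hence degree-preserving) so that $\deg\big(\mathrm{div}(M)-\mathrm{div}(\sig\ast M)\big)=0$ and the $\infty$-adic order can be read off without choosing a global generator, or (b) cite the isogeny to a Carlitz--Hayes power as the paper does and then run your $\infty$-adic computation on that free model.
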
 

\begin{proof}
Suppose that $\ul M$ is of rank one and set $d=\dim \ul M$.
Then by \cite[Examples 2.3.6 and 2.3.9]{HJ20}, there is an isogeny $f\colon \ul M\ot A_\bC \ra \ul N^{\ot d}$, where $\ul N=(N,\tau_N)$ is an effective pure $A$-motive over $\bC$ of rank one satisfying $\tau_{N}(\sig\ast N)=J\cdot N$, so that $\dim\ul N=1$.
(Such $\ul N$ is so-called a {\it Carlitz-Hayes $A$-motives}.)
Since $\ul N^{\ot d}$ is pure, it follows by \cite[Proposition 2.3.11.\ (d)]{HJ20} that 
$\ul M$ is also pure.
\end{proof}
\subsection{de Rham realizations and Hodge-Pink weights}
Let $z \in Q$ be  a uniformizer at some place of $Q$ and set $\zeta:=\gamma(z) \in K$.
Then for the ideal $J=\Ker(A_K\thra K)$,  
\cite[Lemma 2.1.3]{HJ20} arrows us to  identify $\ilim A_K/J^n=K\zzb$, where $K\zzb$ is  the power series ring  over $K$ in ``variable'' $\zz$.
Thus we obtain an injective flat homomorphism $A_K \ra K\zzb$ and $J\cdot K\zzb=(\zz)$.

Let $\ul M=(M,\tau_M)$ be an effective $A$-motive over $K$.
The {\it de Rham realizations} of $\ul M$ are defined as
\begin{align*}
H^1_\dR(\ul M, K\zzb)&:=\sig\ast M \ot_{A_K}K\zzb, \\
H^1_\dR(\ul M,K\zzp)&:=H^1_\dR(\ul M, K\zzb)\ot_{K\zzb}K\zzp, \\
H^1_\dR(\ul M,K)&:=\sig\ast M \ot_{A_K}A_K/J \\
&=H^1_\dR(\ul M, K\zzb) \ot_{K\zzb}K\zzb/(\zz) .
\end{align*}
Since $\Cok(\tau_M)$ is annihilated by a power of $J$, the map $\tau_M\colon \sig\ast M\ra M$ extends to an isomorphism 
$
\tau_M\colon H^1_\dR(\ul M,K\zzp) \overset{\sim}{\lra} M\ot_{A_K}K\zzp.
$
The {\it Hodge-Pink lattice} of $\ul M$ is defined as the $K\zzb$-submodule
$
\cQ_{\ul M}:=\tau_{M}\inv(M\ot_{A_K}K\zzb) 
$ 
of $H_\dR^1(\ul M,K\zzp)$.
We set $\cP_{\ul M}=H^1_\dR(\ul M, K\zzb)$, so that  $\cP_{\ul M} \subset \cQ_{\ul M}$.
Then the
 {\it Hodge-Pink filtration} $F^{\bullet}H^1_\dR(\ul M,K)=\{F^i H^1_\dR(\ul M,K)\}_{i \in \bZ}$ of $\ul M$ is the descending filtration by $K$-subspaces of $H^1_\dR(\ul M,K)$ defined by 
\[
F^i H^1_\dR(\ul M,K):=\left(\cP_{\ul M}\cap (\zz)^i\cQ_{\ul M}\right)/\left((\zz)\cP_{\ul M}\cap (\zz)^i\cQ_{\ul M}\right).
\] 
In other words,  the filtration is defined  by letting  $F^i H^1_\dR(\ul M,K)$ to be the image of $\left(\cP_{\ul M}\cap (\zz)^i\cQ_{\ul M}\right)$ via  the surjection 
$
\cP_{\ul M} \thra \cP_{\ul M}/(\zz)\cP_{\ul M}=H^1_\dR(\ul M, K)$.
Thus  we  obtain the following diagram
\[
\xymatrix@C=76pt@R=-5pt@M=6pt{
H^1_\dR(\ul M,K\zzp) \ar[r]^-{\sim}_-{\tau_{M}}& M\ot_{A_K}K\zzp \\
\cup & \\
\cQ_{\ul M} \ar[rdd]^-{\sim}&  \\
\cup & \\
\cP_{\ul M}=H^1_\dR(\ul M, K\zzb)\ar@{^{(}-_>}[r]_-{\tau_M} & M\ot_{A_K}K\zzb . \ar@{^{(}-_>}[uuuu]
}
\]
Since $K\zzb$ is a principal ideal domain, the elementary divisor theorem implies that
there are non-negative integers $h_1\leq \cdots \leq h_{\rk \ul M}$ and $\lam_1,\ldots,\lam_{\rk \ul M} \in K\zzb\mal$ such that $\tau_M \colon \cP_{\ul M} \ra M\ot_{A_K}K\zzb$ is of the form
\begin{equation}\label{eq.tau}
\tau_M=
\begin{pmatrix}
\lam_1(\zz)^{h_1} & & & \\
 & \lam_2(\zz)^{h_2}  & &{\Large \mbox{$*$}}  \\
 & & \ddots & \\
 && &\\
 &  &  & \lam_{\rk\ul M}(\zz)^{h_{\rk \ul M}} 
\end{pmatrix}
\end{equation}
with respect to suitable $K\zzb$-bases of $\cP_{\ul M}$ and $M\ot_{A_K}K\zzb$.

\begin{definition}\label{defi.HPweight}
We call the integers $h_1, \ldots, h_{\rk \ul M}$ the {\it Hodge-Pink weights} of $\ul M$, 
and denote by $\HP(\ul M):=\{h_1,\ldots,h_{\rk \ul M}\}$ the multi-set  consisting  of them. 
\end{definition}

\begin{remark}\label{rem.HPneg}
The above discussion provides a $K\zzb$-basis ${\bm e}_i \in \cP_{\ul M}$ 
such that $\cP_{\ul M}=\bigoplus_{i=1}^{\rk \ul M}K\zzb \cdot {\bm e}_i$ and $\cQ_{\ul M}=\bigoplus_{i=1}^{\rk \ul M}K\zzb \cdot (\zz)^{-h_i}\cdot {\bm e}_i$.
Although  the negatives $-h_1\geq \cdots \geq -h_{\rk \ul M}$ are often called the Hodge-Pink weights of $\ul M$, 
 we adopt Definition \ref{defi.HPweight}  to simplify the description of Proposition \ref{prop.HPwt} below.
\end{remark}

\begin{remark}
By the matrix representation (\ref{eq.tau}),
we see that the Hodge-Pink weights are characterized as the jumps of the Hodge-Pink filtration, that is,  an integer $h$ is a Hodge-Pink weight of $\ul M$
if and only if 
\[
F^h H^1_\dR(\ul M,K) \supsetneq F^{h+1} H^1_\dR(\ul M,K),
\]
and in addition  the multiplicity of $h \in \HP(\ul M)$ equals to  
$\dim_{K}\left(F^h H^1_\dR(\ul M,K)/ F^{h+1} H^1_\dR(\ul M,K)\right)$. 
\end{remark}

For an effective $A$-motive $\ul M=(M,\tau_M)$ over $K$, 
since $\Cok(\tau_{M})  \cong \bigoplus_{i=1}^{\rk \ul M} K\zzb/(\zz)^{h_i}$ as $K$-vector spaces if $\HP(\ul M)=\{h_1,\ldots,h_{\rk \ul M}\}$,
we immediately obtain the next assertion:

\begin{proposition}\label{prop.HPwt}
Let $\ul M$ be an effective $A$-motive over $K$.
\begin{itemize}
\item[$(1)$] $\dim \ul M=\sum \HP(\ul M)$.
\item[$(2)$] For $h\geq 0$, $\ul M$ is of height $\leq h$ if and only if $\HP(\ul M) \subset [0,h]$.
\end{itemize}
\end{proposition}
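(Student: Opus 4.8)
The plan is to read off both statements directly from the matrix normal form \eqref{eq.tau} of $\tau_M$, which the elementary divisor theorem already provides. First I would recall that, with respect to suitable $K\zzb$-bases of $\cP_{\ul M}=H^1_\dR(\ul M,K\zzb)$ and of $M\ot_{A_K}K\zzb$, the map $\tau_M$ is diagonal with entries $\lam_i(\zz)^{h_i}$, $\lam_i\in K\zzb\mal$ and $0\le h_1\le\cdots\le h_{\rk\ul M}$. Since each $\lam_i$ is a unit in $K\zzb$, the cokernel of $\tau_M\colon\cP_{\ul M}\to M\ot_{A_K}K\zzb$ decomposes as $\bigoplus_{i=1}^{\rk\ul M}K\zzb/(\zz)^{h_i}$ as a $K\zzb$-module, hence as a $K$-vector space. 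Now I would identify this cokernel with $\Cok(\tau_M)$: indeed $\tau_M\colon\sig\ast M\to M$ becomes $\tau_M\colon\cP_{\ul M}\to M\ot_{A_K}K\zzb$ after the flat base change $A_K\ra K\zzb=\ilim A_K/J^n$ along the map from \cite[Lemma 2.1.3]{HJ20}, and since $\Cok(\tau_M)$ is a finite-dimensional $K$-vector space annihilated by a power of $J$, it is already complete with respect to the $J$-adic topology, so the base change does not change it. (Alternatively one can note $\Cok(\tau_M)\ot_{A_K}K\zzb\cong\Cok(\tau_M)$ because $\Cok(\tau_M)$ is supported at the closed point cut out by $J$.)

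For (1): taking $K$-dimensions in $\Cok(\tau_M)\cong\bigoplus_{i=1}^{\rk\ul M}K\zzb/(\zz)^{h_i}$ gives
\[
\dim\ul M=\dim_K\Cok(\tau_M)=\sum_{i=1}^{\rk\ul M}h_i=\Sigma\HP(\ul M),
\]
using the definition $\dim\ul M:=\dim_K\Cok(\tau_M)$ and the definition of $\Sigma$ on a multi-set.

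For (2): by definition $\ul M$ is of height $\le h$ iff $J^h\cdot\Cok(\tau_M)=0$, equivalently $(\zz)^h\cdot\bigoplus_i K\zzb/(\zz)^{h_i}=0$, equivalently $h\ge h_i$ for every $i$, i.e.\ $\HP(\ul M)\subset[0,h]$ (the lower bound $0$ being automatic since the $h_i$ are non-negative). I would also recall here, as already noted in the preceding Remark, that the $h_i$ are exactly the jumps of the Hodge-Pink filtration with the stated multiplicities, so the multi-set $\HP(\ul M)$ is intrinsic; but this is not logically needed for (1) and (2), which follow purely from the normal form.

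The argument is essentially bookkeeping, so there is no real obstacle; the only point requiring a word of care is the identification of the cokernel of the base-changed $\tau_M$ with $\Cok(\tau_M)$ itself, which I would handle by the flatness of $A_K\ra K\zzb$ together with the fact that $\Cok(\tau_M)$ is $J$-power-torsion, as indicated above. Everything else is immediate from \eqref{eq.tau}.
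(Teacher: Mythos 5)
Your proposal is correct and is essentially the paper's own argument: the paper simply asserts the isomorphism $\Cok(\tau_M)\cong\bigoplus_{i=1}^{\rk\ul M}K\zzb/(\zz)^{h_i}$ right before the proposition and says both claims follow immediately. The only thing you add is the explicit justification (flatness of $A_K\to K\zzb$ plus $J$-power torsion of $\Cok(\tau_M)$) that passing to the $J$-adic completion does not change the cokernel, which the paper leaves implicit.
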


%
%

\section{Galois representations arising from local shtukas}\label{sec.local}
Throughout this section, 
let $L$ be a complete discrete valuation field containing $K$ and suppose that the residue field $k$  of $L$ is perfect, and that the conpostite $A \overset{\gamma} \ra K\subset L$ factors through the valuation ring $\cO$ of $L$ and satisfies $\Ker(A \overset{\gamma} \ra \cO \thra k)=\frp$ for some maximal ideal $\frp$ of $A$.
We set $\wh q:=q^{d_\frp}=\#\bF_\frp$.
Let  $\pi \in L$ be a uniformizer and 
denote by
 $\ord_L(\cdot)\colon L \ra \bZ\cup\{\infty\}$  the discrete valuation of $L$ normalized as $\ord_L(\pi)=1$. 
Choose and fix a uniformizer $z \in Q$ at $\frp$.
This allows us to identify $A_\frp=\bF_\frp\zb$ and $Q_\frp=\bF_\frp\zp$.
By continuity, the map  $\gamma \col A \ra \cO$ extends to $\gamma \col A_\frp \ra \cO$, and we set $\zeta :=\gamma(z) \in \cO$.
Let $\cO\zb$ be the power series ring over $\cO$ in ``variable'' $z$.
Let us consider the endomorphism $\hsig$ of $\cO\zb$ determined by $\wh\sig(z)=z$ and $\wh\sig(\lam)=\lam^{\wh q}$ for $\lam \in \cO$.
For an $\cO\zb$-module $\wh M$, we set $\wh\sig\ast \wh M:=\wh M\ot_{\cO\zb,\wh\sig}\cO\zb$.
 
\subsection{Local shtukas and torsion local shtukas}\label{ss.shtuka}

\begin{definition}
A {\it local shtuka} over $\cO$ of {\it rank} $r$ is a pair $\wh {\ul M}=(\wh M, \tau_{\wh M})$ consisting of a free $\cO\zb$-module $\wh M$ of rank $r$ and an isomorphism $\tau_{\wh M}\col \wh\sig\ast \wh M[\zzf] \lrai \wh M[\zzf]$.
If $\tau_{\wh M}(\wh\sig\ast\wh M) \subset \wh M$, then $\wh {\ul M}$ is said to be {\it effective}, and if $(\zz)^h\wh M \subset \tau_{\wh M}(\wh\sig\ast\wh M) \subset \wh M$ for $h \geq 0$, then $\wh {\ul M}$ is said to be {\it effective of height $\leq h$}.

A {\it morphism} of local shtukas $f \col (\wh M, \tau_{\wh M}) \ra (\wh M', \tau_{\wh M'})$ over $\cO$ is a morphism of $\cO\zb$-modules $f\col \wh M \ra \wh M'$ such that the induced morphism $f \col \wh M[\zzf] \ra \wh M'[\zzf]$ satisfies  $\tau_{\wh M'}\circ \wh \sig\ast f=f \circ \tau_{\wh M}$.
It is called an {\it isogeny} if $f$ induces an isomorphism $f \col \wh M[\zf] \ra \wh M'[\zf]$ of $\cO\zp$-modules.
Note that if $f\col \ul{\wh M} \ra \ul{\wh M'}$ is an isogeny, then $f$ is injective and $\rk \ul {\wh M}=\rk\ul{\wh M'}$.
\end{definition}

\begin{lemma}\label{lemdimension}
Let $\wh {\ul M}=(\wh M, \tau_{\wh M})$ be a local shtuka over $\cO$.

\begin{itemize}
\item[{$(1)$}] There is an integer $d \in \bZ$ such that $\det \tau_{\wh M}\in (\zz)^d\cdot \cO\zb\mal$.
\item[$(2)$] If $\wh{\ul M}$ is effective, then the integer $d$ in $(1)$ satisfies $d \geq 0$ and $\wh M/\tau_{\wh M}(\wh\sig\ast \wh M)$ is a free $\cO$-module of rank $d$ which is  annihilated by $(\zz)^d$. 
\end{itemize}
\end{lemma}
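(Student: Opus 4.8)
The plan is to fix bases and reduce everything to linear algebra over the ring $R:=\cO\zb$, whose one relevant property is that $\zz$ is a prime element of the Noetherian domain $R$: indeed $R/(\zz)\cong\cO$ via $z\mapsto\zeta$, which is a domain. First I would choose $\cO\zb$-bases of $\wh M$ and of $\wh\sig\ast\wh M$ (free of rank $r$ over $R$ since $\wh M$ is), and let $T$ be the matrix of $\tau_{\wh M}$; then $T\in\GL_r\!\left(R[\zzf]\right)$, and changing bases multiplies $T$ on both sides by elements of $\GL_r(R)$, so $\det T$ is well defined in $R[\zzf]\mal$ modulo $R\mal$. From primeness of $\zz$ together with the Krull intersection $\bigcap_n(\zz)^nR=0$ one gets that every unit of $R[\zzf]$ has the form $(\zz)^d v$ with $d\in\bZ$ and $v\in R\mal$, with $d$ unique: writing such a unit as a reduced fraction $g/(\zz)^n$ with $\zz\nmid g$ and its inverse similarly as $g'/(\zz)^{n'}$, the relation $gg'=(\zz)^{n+n'}$ forces (by primeness) $n=n'=0$ and $g,g'\in R\mal$. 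Applying this to $\det T$ yields $(1)$.

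For $(2)$ assume $\wh{\ul M}$ effective, so $T$ has entries in $R$ and $\det T=(\zz)^d v\in R$ with $v\in R\mal$; since $\zzf\notin R$ this forces $d\geq0$. Cramer's rule $\mathrm{adj}(T)\,T=(\det T)\,I$ gives $(\zz)^dR^r\subseteq TR^r$, i.e. $(\zz)^d\wh M\subseteq\tau_{\wh M}(\wh\sig\ast\wh M)$; hence $C:=\wh M/\tau_{\wh M}(\wh\sig\ast\wh M)\cong\Cok(T\col R^r\ra R^r)$ is killed by $(\zz)^d$, so it is a module over $R/(\zz)^d$, which via the change of variable $\cO\zb=\cO\llbracket\zz\rrbracket$ is a free $\cO$-module of rank $d$; in particular $C$ is a finitely generated $\cO$-module.

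It remains to show $C$ is $\cO$-free of rank exactly $d$, and here I use the running hypothesis $\Ker(A\ra k)=\frp$, which forces $\zeta\in\frm_\cO$, so that $R/\pi R\cong k\zb$ is a discrete valuation ring and $\zz\equiv z\pmod\pi$. I would first check $C$ has no $\pi$-torsion: if $\pi w\in TR^r$, say $\pi w=Tw'$, then $(\det T)\,w'=\mathrm{adj}(T)\,\pi w\in\pi R^r$, and reducing mod $\pi$, where $\overline{\det T}=z^d\bar v$ with $\bar v\in k\zb\mal$, forces $w'\in\pi R^r$; cancelling $\pi$ gives $w\in TR^r$. Thus $C$ is finitely generated and torsion free over the discrete valuation ring $\cO$, hence free, of rank $\dim_k(C/\pi C)=\dim_k\Cok(\bar T\col k\zb^r\ra k\zb^r)$, which by the elementary divisor theorem over the discrete valuation ring $k\zb$ equals $\ord_z(\det\bar T)=d$.

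The only non-formal point is the first one — identifying the unit group of $R[\zzf]$, where primeness of $\zz$ in $R$ is the crux — and one must be careful that the hypothesis $\Ker(A\ra k)=\frp$ is precisely what makes $\zz\equiv z\pmod\pi$, on which the mod-$\pi$ rank and torsion computations rely. Everything else is Cramer's rule and a reduction modulo $\pi$.
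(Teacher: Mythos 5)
Your proof is correct, and it is genuinely more informative than what the paper provides: the paper's ``proof'' of Lemma \ref{lemdimension} is the single sentence ``This is \cite[Lemma 3.2.3]{HK20},'' so you have supplied a self-contained argument where the paper defers to Hartl--Kim. Your strategy --- change of variable $\cO\zb=\cO\llbracket\zz\rrbracket$ (legitimate because $\zeta\in\frm_\cO$, which is exactly where the running hypothesis $\Ker(A\to k)=\frp$ enters), identify units of $\cO\zb[\zzf]$ via primeness of $\zz$ and Krull intersection, then Cramer's rule plus a reduction mod $\pi$ to a DVR computation over $k\zb$ --- is sound and matches the style of argument in [HK20]. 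In particular the mod-$\pi$ argument correctly handles the two nontrivial points in (2): torsion-freeness of $C=\Cok(T)$ over $\cO$, and the rank count $\dim_k(C/\pi C)=\ord_z(\det\bar T)=d$ via elementary divisors of $\bar T$ over $k\zb$, which uses $\zz\equiv z\pmod\pi$.

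One small slip to fix in (1): writing $u=g/(\zz)^n$, $u^{-1}=g'/(\zz)^{n'}$ in lowest terms with $n,n'\in\bZ$, the relation $gg'=(\zz)^{n+n'}$ forces $n+n'=0$ and $g,g'\in R\mal$ --- not ``$n=n'=0$'' as written. (If you insist on $n,n'\geq0$ as the word ``reduced fraction'' suggests, then $n=n'=0$ does follow, but then you would have proved that every unit of $R[\zzf]$ lies in $R\mal$, which is false: $\zz$ itself is a counterexample. The reduced exponent $n$ must be allowed to be negative.) The intended conclusion, that every unit of $R[\zzf]$ is $(\zz)^d v$ with $d\in\bZ$, $v\in R\mal$, and $d$ unique, is correct and follows from $n+n'=0$ together with $g,g'\in R\mal$; uniqueness of $d$ is then immediate since $\zz$ is a nonunit nonzerodivisor. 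Likewise in (2), the one-line justification ``since $\zzf\notin R$ this forces $d\geq0$'' is a bit compressed: the point is that $(\zz)^d v\in R$ with $v\in R\mal$ and $d<0$ would give $v\in(\zz)R$, contradicting $v\in R\mal$. With those two cosmetic repairs the argument is complete.
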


\begin{proof}
This is   \cite[Lemma 3.2.3]{HK20}.
\end{proof}

\begin{definition}
Define the {\it dimension} of $\wh{\ul M}$ by the integer $d$ as in Lemma \ref{lemdimension}, and set $\dim \wh{\ul M}:=d$.
Clearly $\wh{\ul M}$ is of height $ \leq d$ if it is effective.
\end{definition}

Let $\wh{\ul M}=(\wh M,\tau_{\wh M})$ be a local shtuka over $\cO$.
Then $\tau_{\wh M}$ induces an isomorphism 
\[
\tau_{\wh M} \col \hsig\ast \wh M \ot_{\cO\zb}L^\sep\zb \lrai \wh M\ot_{\cO\zb}L^\sep\zb
\]
since $\zz$ is invertible in $L^\sep\zb$.
We define the action of $G_L=\Gal(L^\sep/L)$ on 
$\wh M \ot_{\cO\zb}L^\sep\zb$ via 
the trivial action on $\wh M$ and the natural action on
$L^\sep\zb$.
Now the $\hsig$-invariant subring  of $L^\sep\zb$ is $\bF_\frp\zb=A_\frp$.
Then we define  
the {\it dual Tate module} of $\ul{\wh M}$ by  
\[  
\wc T_\frp \wh{\ul M}:=(\wh M \ot_{\cO\zb}L^\sep\zb)^{\tau}:=
\{ m \in \wh M \ot_{\cO\zb}L^\sep\zb  \mid  \tau_{\wh M}(\hsig\ast m)=m\},
\] 
which is a free $A_\frp$-module of rank equal to $\rk \wh{\ul M}$ with a continuous $G_L$-action.
We also define the {\it rational dual Tate module} of  $\wh{\ul M}$ 
by the $Q_\frp$-vector space with a continuous $G_L$-action
$
\wc V_\frp \wh{\ul M}:= \wc T_\frp \wh{\ul M} \ot_{A_\frp}Q_\frp.
$ 
As dual of them, the {\it Tate module} and the {\it rational Tate module} of $\ul{\wh M}$ are defined by 
\[
T_\frp\wh{\ul M}=\Hom_{A_\frp}(\wc T_\frp \wh{\ul M}, A_\frp)\es\es 
\mbox{and}
\es\es
 V_\frp\wh{\ul M}=T_\frp\wh{\ul M}\ot_{A_\frp}Q_\frp.
\]

\begin{example}[{cf.\ \cite[Example 3.2.2]{HK20}}]\label{ex.motiveshtuka}
It is known that local shtukas relate with effective $A$-motives with good reduction.
To describe this,  let $\ul M$ be an effective $A$-motive over $L$ 
with good reduction.
Let $\ul \cM=(\cM, \tau_\cM)$ be a good model  of $\ul M$. 
Now we consider the $\frp$-adic completion $A_{\frp,\cO}:=A_\frp\wh\ot_{\bF_q}\cO$ of $A_\cO$ and set 
$\ul \cM\ot{A_{\frp,\cO}}:=(\cM\ot_{A_\cO}A_{\frp,\cO},\tau_\cM\ot\id$).
Then we get the  {\it local shtuka associated with $\ul M$} denoted by $\wh {\ul M}_\frp(\ul M)$ as follows.

\begin{itemize}
\setlength{\leftskip}{-10pt}
\item 
If $\bF_\frp=\bF_q$, and so $\wh q=q$ and $\wh \sig=\sig$, then we have $A_{\frp,\cO}=\cO\zb$ and $J\cdot A_{\frp,\cO}=(\zz)$.
Hence $\ul\cM\ot A_{\frp,\cO}$ itself becomes an effective local shtuka over $\cO$.
 We set 
$\wh {\ul M}_\frp(\ul M):=\ul\cM\ot A_{\frp,\cO}$. 
\item On the other hand, let us assume   $d_\frp=[\bF_\frp:\bF_q]>1$.
For each $0 \leq i \leq d_\frp-1$, we consider the ideals $\fra_i\subset A_{\frp,\cO}$ generated by $\{b\ot1-1\ot \gamma(b)^{q^i} \mid b \in \bF_\frp\}$.
Then these ideals satisfy $\prod_{i=0}^{d_\frp-1} \fra_i=(0)$ because  for any $b\in \bF_\frp$, the polynomial $\prod_{i=0}^{d_\frp-1}(X-b^{q^i})$ is a multiple of the minimal polynomial of $b$ over $\bF_q$ and even equal to it when $\bF_\frp=\bF_q(b)$.
The Chinese remainder theorem yields the decomposition 
\[
A_{\frp,\cO}=\prod_{i=0}^{d_\frp-1} A_{\frp,\cO}/\fra_i
\]
whose factors have  canonical isomorphisms $ A_{\frp,\cO}/\fra_i \cong \cO\zb$.
In addition, the factors are cyclically permuted by $\sig$ since $\sig(\fra_i)=\fra_{i+1}$, and hence $\hsig=\sig^{d_\frp}$ stabilizes each factor.
Here it follows that the  ideal $J$ decomposes as $J\cdot A_{\frp,\cO}/\fra_0=(\zz)$ and $J\cdot A_{\frp,\cO}/\fra_i=(1)$ for $i\neq 0$.
Considering the $d_\frp$-th iteration $\tau_\cM^{d_\frp}:=\tau_\cM \circ \sig\ast\tau_\cM \circ \cdots \circ \sig^{(d_\frp-1)*}\tau_\cM$, we get the effective local shtuka $\ul{\wh M}_\frp(\ul M):=(\cM\ot_{A_\cO}\left(A_{\frp,\cO}/\fra_0\right), (\tau_{\cM}\ot \id)^{d_\frp})$.
This definition coincides with the before one when $d_\frp=1$.
By construction, we see that $\Cok(\tau_\cM)$ is canonically isomorphic to $\wh M/\tau_{\wh M}(\wh\sig\ast\wh M)$ as free $\cO$-modules. 
Hence we have $\dim \ul M=\dim \wh{\ul M}_\frp(\ul M)$.
\end{itemize}
\end{example}

\begin{remark}\label{rem.localshtukaeq}
The local shtuka  $\ul{\wh M}_\frp(\ul M)$ allows to recover $\ul \cM \ot A_{\frp,\cO}$ via the isomorphism
\[
\bigoplus_{i=0}^{d_\frp-1}(\tau_\cM\otimes\id)^i\  {\rm mod}\ \fra_i \colon \left( 
\bigoplus_{i=0}^{d_\frp-1}\sig^{i*}(\cM\otimes_{A_{\cO}}A_{\frp,\cO}/\fra_0),
(\tau_\cM^{d_\frp}\otimes\id)\oplus\bigoplus_{i \neq 0}\id
\right)
\lrai
\ul\cM\otimes_{A_{\cO}}A_{\frp,\cO}.
\]
Sometimes $\ul \cM\ot_{A_\cO} A_{\frp,\cO}$ is called an effective local shtuka.
In fact, the category of locally free $A_{\frp,\cO}$-modules $N$ equipped with injective $A_{\frp,\cO}$-homomorphisms $\tau_{N}\col \sig\ast N \ra N$ satisfying $J^n\cdot\Cok(\tau_{N})=0$ for some $n>0$  is equivalent to the category of effective local shtukas over $\cO$ via the functor $(N,\tau_N)\mapsto (N/\fra_0,(\tau_N\ot\id)^{d_\frp})$;  see \cite[Propositions 8.8 and 8.5]{BH11}.
\end{remark}

The local shtuka $\wh{\ul M}_\frp(\ul M)$ associated with $\ul M$ satisfies  $T_\frp \ul M \cong T_\frp\left(\wh{\ul M}_\frp(\ul M)\right)$ by the following:

\begin{proposition}
Let $\ul M$ be an effective $A$-motive over $L$ with good reduction and set $\wh{\ul M}=\ul{\wh M}_\frp(\ul M)$.
Then there is a canonical functorial isomorphism $H^1_\frp(\ul M,A_\frp) \overset{\sim}{\ra} \wc T_\frp \wh{\ul M}$ as representations of $G_L$.
\end{proposition}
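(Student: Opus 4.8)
The plan is to pass to a good model of $\ul M$ and unwind the construction of $\wh{\ul M}_\frp(\ul M)$ in Example~\ref{ex.motiveshtuka}. For $\ul M$ over $L$, the module $H^1_\frp(\ul M,A_\frp)$ is defined exactly as in \S3.2 with $K$ replaced by $L$, namely $H^1_\frp(\ul M,A_\frp)=(M\ot_{A_L}A_{\frp,L^\sep})^\tau$ with $A_{\frp,L^\sep}=A_\frp\wh\ot_{\bF_q}L^\sep$, $\tau=\tau_M\ot\id$, and $G_L$ acting through $L^\sep$. Fixing a good model $\ul\cM=(\cM,\tau_\cM)$ of $\ul M$, we have $M=\cM\ot_{A_\cO}A_L$, hence a $G_L$-equivariant identification compatible with $\tau$,
\[
M\ot_{A_L}A_{\frp,L^\sep}=\cM\ot_{A_\cO}A_{\frp,L^\sep}.
\]
When $\bF_\frp=\bF_q$ this already finishes the proof: then $A_{\frp,\cO}=\cO\zb$, $A_{\frp,L^\sep}=L^\sep\zb$, and $\wh{\ul M}_\frp(\ul M)=\ul\cM\ot A_{\frp,\cO}$ by definition, so $H^1_\frp(\ul M,A_\frp)$ and $\wc T_\frp\wh{\ul M}$ are both $(\cM\ot_{A_\cO}L^\sep\zb)^\tau$ with the same $\tau$.

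For $d_\frp>1$ I would run the decomposition of Example~\ref{ex.motiveshtuka} over $L^\sep$ in place of $\cO$, giving $A_{\frp,L^\sep}=\prod_{i=0}^{d_\frp-1}A_{\frp,L^\sep}/\fra_i$ with $A_{\frp,L^\sep}/\fra_i\cong L^\sep\zb$; since $\gamma(\bF_\frp)\subset\cO$, the generators $b\ot1-1\ot\gamma(b)^{q^i}$ of $\fra_i$ are fixed by $G_L$, so each factor is $G_L$-stable and the projections are $G_L$-equivariant. Because $\sig(\fra_i)=\fra_{i+1}$, the map $\tau_\cM\ot\id$ permutes the factors cyclically: it is an isomorphism on the $\fra_i$-factor for $i\neq0$, and its $d_\frp$-fold iterate on the $\fra_0$-factor is $\tau_{\wh M}$, the $\tau$ of $\wh{\ul M}=\wh{\ul M}_\frp(\ul M)$. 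Base changing the isomorphism of Remark~\ref{rem.localshtukaeq} along $\cO\ra L^\sep$ then identifies $\cM\ot_{A_\cO}A_{\frp,L^\sep}$, with its $\tau_\cM\ot\id$, with $\bigoplus_{i=0}^{d_\frp-1}\sig^{i*}\bigl(\wh M\ot_{\cO\zb}L^\sep\zb\bigr)$ carrying the semilinear map that is $\tau_{\wh M}$ on the $\fra_0$-summand and the identity on the others. Passing to $\tau$-invariants, a tuple $(m_0,\dots,m_{d_\frp-1})$ is determined by $m_0$ through the cyclic isomorphisms, and the one remaining constraint --- obtained by running once around the cycle of length $d_\frp$ --- is precisely $\tau_{\wh M}(\hsig\ast m_0)=m_0$. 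Hence $m\mapsto m_0$ gives a $G_L$-equivariant isomorphism
\[
H^1_\frp(\ul M,A_\frp)=\bigl(\cM\ot_{A_\cO}A_{\frp,L^\sep}\bigr)^\tau\ \lrai\ \bigl(\wh M\ot_{\cO\zb}L^\sep\zb\bigr)^\tau=\wc T_\frp\wh{\ul M},
\]
all maps involved being $\cO\zb$-linear and respecting the $L^\sep$-action.

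Functoriality in $\ul M$ is then clear from the naturality of every step in $\ul\cM$, together with the equivalence of categories in Remark~\ref{rem.localshtukaeq}; this also makes the word ``canonical'' meaningful, since $H^1_\frp(\ul M,A_\frp)$ is intrinsic to $\ul M$ and $\wh{\ul M}_\frp(\ul M)$ is defined up to canonical isomorphism. I expect the only real work to be the semilinear bookkeeping in the $d_\frp>1$ step: one must track carefully how the $\sig$-semilinear ($q$-power) map $\tau_\cM$ acts across the permuted factors $A_{\frp,\cO}/\fra_i$ versus the $\hsig$-semilinear ($q^{d_\frp}$-power) structure of the local shtuka supported on $A_{\frp,\cO}/\fra_0$, and check that passing to invariants collapses the cyclic tuple onto its $\fra_0$-component as claimed. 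This is essentially the content already packaged in Example~\ref{ex.motiveshtuka} and Remark~\ref{rem.localshtukaeq}, so in the end the proof amounts mostly to assembling those ingredients and verifying the $G_L$-action.
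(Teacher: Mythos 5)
Your argument is correct. The paper's own proof of this proposition is a one-line citation to \cite[Proposition 3.4.6]{HK20}, but the argument you give is essentially identical to the paper's later proof of Proposition~\ref{prop.localan} (the analytic analogue): decompose $A_{\frp,L^\sep}$ by CRT along the ideals $\fra_i$, observe that $\tau_{\cM}\otimes\id$ shifts the factors cyclically so that a $\tau$-fixed tuple $(m_i)$ is determined by $m_0$, and that the single remaining constraint after one full cycle is $\tau_{\wh M}(\hsig\ast m_0)=m_0$; the $G_L$-equivariance is clear because $\gamma(\bF_\frp)\subset\cO$ makes the ideals $\fra_i$ Galois-stable. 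So you have, in effect, supplied the proof that the paper outsources to HK20.
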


\begin{proof}
See \cite[Proposition 3.4.6]{HK20}.
\end{proof}

We next introduce a torsion version of equi-characteristic Fontaine's theory given by {\it torsion local shtukas}, which are a function field analog of 
finite flat group schemes of $p$-power order.

\begin{definition}
A {\it torsion local shtuka} over $\cO$ is a pair $\ul\frM=(\frM,\tau_\frM)$ consisting of a finitely presented $\cO\zb$-module $\frM$ which is $z$-power torsion and finite free over $\cO$, and an isomorphism $\tau_\frM \col \wh\sig\ast \frM[\zzf] \lrai \frM[\zzf]$.
If $\tau_\frM(\wh\sig\ast\frM)\subset \frM$, then $\ul \frM$ is said to be {\it effective}, and if 
$(\zz)^h\frM \subset \tau_\frM(\wh\sig\ast\frM)\subset \frM$ for $h\geq 0$, then $\ul \frM$ is said to be {\it effective of height $\leq h$}.
Define the rank  of $\ul \frM$ by $\rk\ul \frM:=\rk_\cO\frM$.

A {\it morphism} of torsion local shtukas $f\colon \ul\frM\ra \ul \frM'$ over $\cO$ is a morphism of $\cO\zb$-modules 
$f\col \frM\ra \frM'$ such that $\tau_{\frM'}\circ \wh\sig\ast f=f\circ\tau_\frM$.
\end{definition}

\begin{remark}
Note that $\frM$ injects into $\frM[\zzf]$ because $\zeta$ is $\frM$-regular and $\frM$ has $z$-power torsion.  
\end{remark}

One can associate torsion local shtukas with torsion Galois representations in the same way for local shtukas as follows.
Let $\ul{\frM}=(\frM,\tau_{\frM})$ be a torsion local shtuka over $\cO$.
Then $\tau_{\frM}$ induces an isomorphism $\tau_{\frM}\col  \hsig\ast \frM \ot_{\cO\zb}L^\sep\zb \lrai \frM\ot_{\cO\zb}L^\sep\zb$.
Define
\[
\wc T_\frp \ul\frM:=(\frM \ot_{\cO\zb}L^\sep\zb)^{\tau}:=
\{ m \in \frM \ot_{\cO\zb}L^\sep\zb  \mid  \tau_{\frM}(\hsig\ast m)=m\},
\]
which has a discrete $G_L$-action induced by that on $L^\sep\zb$.
In fact, 
 it is known by \cite[Proposition 3.7.8]{HK20} that
$\wc T_\frp \ul\frM$ is a torsion $A_\frp$-module of length equal to $\rk \ul \frM=\rk_\cO \frM$
with discrete $G_L$-action, and that 
 the inclusion $\wc T_\frp\ul\frM \hra \frM\ot_{\cO\zb}L^\sep\zb$ induces a canonical $G_L$-equivariant isomorphism of $L^\sep\zb$-modules
\begin{equation}\label{eq.toret}
\wc T_\frp\ul\frM\ot_{A_\frp}L^\sep\zb \lrai \frM\ot_{\cO\zb} L^\sep\zb
\end{equation}
such that
it commutes with the diagram
\[
\xymatrix{
\wc T_\frp\ul\frM\ot_{A_\frp}L^\sep\zb \ar[r]^-\sim\ar[d]_-{\id\ot\hsig} & \frM\ot_{\cO\zb} \ar[d] L^\sep\zb \\
\wc T_\frp\ul\frM\ot_{A_\frp}L^\sep\zb\ar[r]^-\sim & \frM\ot_{\cO\zb} L^\sep\zb ,
}
\]
where $\frM\ot_{\cO\zb} L^\sep\zb \ra \frM\ot_{\cO\zb} L^\sep\zb$ is given by 
 $m\ot \lam\mapsto \tau_{\frM}(\hsig\ast m)\ot \hsig(\lam)$ for $m \in \frM$ and $\lam \in L\zb$.
Also we define 
\[
T_\frp\ul\frM:=\Hom_{A_\frp}(\wc T_\frp\frM, Q_\frp/A_\frp).
\]
Then $\ul\frM \mapsto T_\frp\ul\frM$ yields a  contravariant exact functor from the category of torsion local shtukas to that of torsion $A_\frp$-modules with discrete $G_L$-action. 

For an isogeny of local shtukas $f\colon \ul (\wh M,\tau_{\wh M}) \ra (\wh M',\tau_{\wh M'})$ over $\cO$, set $\frM_f:=\Cok(f)$.
We see that $\tau_{\wh M'}$ induces an isomorphism $\tau_f \colon \wh\sig\ast \frM_f[\zzf] \rai \frM_f[\zzf]$.
Then we have the following (cf.\ \cite[Example 3.7.3]{HK20}): 

\begin{lemma}
Let $f\colon \ul {\wh M} \ra \ul{\wh M'}$ be an isogeny of local shtukas over $\cO$. 
Then $\ul{\frM}_f=(\frM_f,\tau_f)$ is a torsion local shtuka over $\cO$, which is effective of height $\leq h$ if 
 $\ul{\wh M'}$ is effective of height $\leq h$.
\end{lemma}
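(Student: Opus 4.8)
The plan is to deduce everything from the defining exact sequence of $\cO\zb$-modules
\[
0 \lra \wh M \overset{f}{\lra} \wh M' \lra \frM_f \lra 0,
\]
transporting properties of $\wh M'$ to the quotient $\frM_f$. Since $f$ is an isogeny it becomes an isomorphism after inverting $z$, so $\frM_f[\zf]=0$; as $\frM_f$ is finitely generated over the Noetherian ring $\cO\zb$, this forces $z^N\frM_f=0$ for some $N\geq0$, and the displayed sequence is a finite presentation. In particular $\frM_f$ is finitely generated over $\cO\zb/(z^N)=\cO[z]/(z^N)$, hence over $\cO$. The one point needing a genuine argument is that $\frM_f$ is \emph{free} over $\cO$.

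For that I would reduce modulo the maximal ideal $\frm$ of $\cO$. Choosing $\cO\zb$-bases of $\wh M$ and $\wh M'$, which have equal rank $\rk\ul{\wh M}=\rk\ul{\wh M'}$, the map $f$ is given by an $r\times r$ matrix over $\cO\zb$ whose determinant $\delta$ is a unit of $\cO\zp$ because $f$ is an isogeny. Hence $\delta\notin\frm\cO\zb$ — otherwise its image in $\cO\zp/\frm\cO\zp=k\zp$ (with $k=\cO/\frm$) would be both zero and a unit — so the reduction of the matrix modulo $\frm$ has nonzero determinant in the domain $k\zb=k[[z]]$ and is therefore injective; equivalently $\bar f:=f\ot_\cO k\colon\wh M\ot_\cO k\to\wh M'\ot_\cO k$ is injective. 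A short diagram chase in the $\frm$-reduction of the exact sequence, using that $\wh M'$ is $\cO$-torsion free, then shows $\frM_f$ is $\cO$-torsion free; being finitely generated over the discrete valuation ring $\cO$, it is finite free (equivalently $\Tor_1^\cO(\frM_f,k)=0$). This produces the underlying module of a torsion local shtuka.

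Next I would construct $\tau_f$ and check it is an isomorphism after inverting $\zz$. Applying the right-exact functor $\wh\sig\ast$ to the exact sequence and then inverting $\zz$ identifies $\wh\sig\ast\frM_f[\zzf]$ with the cokernel of $\wh\sig\ast f$ over $\cO\zb[\zzf]$. Since $f$ is a morphism of local shtukas one has $\tau_{\wh M'}\circ(\wh\sig\ast f)=f\circ\tau_{\wh M}$, and $\tau_{\wh M}$, $\tau_{\wh M'}$ are isomorphisms after inverting $\zz$ by the definition of a local shtuka; combining these, $\tau_{\wh M'}$ carries the image of $\wh\sig\ast f$ (after inverting $\zz$) onto the image of $f$, hence descends to an isomorphism $\tau_f\colon\wh\sig\ast\frM_f[\zzf]\rai\frM_f[\zzf]$. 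Together with the previous paragraph, $\ul\frM_f=(\frM_f,\tau_f)$ is a torsion local shtuka over $\cO$.

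Finally, assume $\ul{\wh M'}$ is effective of height $\leq h$, i.e.\ $(\zz)^h\wh M'\subseteq\tau_{\wh M'}(\wh\sig\ast\wh M')\subseteq\wh M'$. Write $q\colon\wh M'\thra\frM_f$ and $\bar q\colon\wh\sig\ast\wh M'\thra\wh\sig\ast\frM_f$ for the canonical surjections; the construction of $\tau_f$ gives $q\circ\tau_{\wh M'}=\tau_f\circ\bar q$. Applying $q$ to the two inclusions above and using surjectivity of $q$ and $\bar q$ yields $(\zz)^h\frM_f\subseteq\tau_f(\wh\sig\ast\frM_f)\subseteq\frM_f$, so $\ul\frM_f$ is effective of height $\leq h$. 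The only step that is not purely formal is the $\cO$-freeness of $\frM_f$, and there the crux is the injectivity of $\bar f$, i.e.\ that $\det f$ does not vanish modulo $\frm$; everything else is diagram chasing. (This lemma is also a special case of \cite[Example~3.7.3]{HK20}, but the argument sketched here is self-contained.)
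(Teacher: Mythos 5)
Your proof is correct and follows essentially the same strategy as the paper's: establish that $\frM_f$ is $z$-power torsion and finitely generated over $\cO$, show the reduction $f\ot_\cO k$ is injective to get $\Tor_1^\cO(k,\frM_f)=0$, conclude $\cO$-freeness, and then transport effectivity and the height bound through the surjection on cokernels. The only difference is cosmetic: you deduce injectivity of $\bar f$ from $\det f\in\cO\zp^\times$ not lying in $\frm\cO\zb$, while the paper invokes the elementary divisor theorem over $k\zb$ using the rank equality and that $\Cok(\bar f)$ is $z$-power torsion; these are two phrasings of the same fact.
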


\begin{proof}
Recall that $f$ is injective and $\rk \ul{\wh M}=\rk\ul{\wh M'}$.
By the definition of isogenies, there is a positive integer $n$ such that  $z^n \wh M' \subset f(\wh M)$, and so $\frM_f$ is annihilated by $z^n$.
Tensoring with the residue field $k$ over $\cO$, we have an exact sequence 
\[
0 \ra \Tor_1^\cO(k,\frM_f) \ra \wh M'\ot_{\cO\zb}k\zb \overset{f\ot\id_k}{\lra} \wh M\ot_{\cO\zb}k\zb \ra \frM_f\ot_\cO k\ra 0.
\]
Since  $\wh M\ot_{\cO\zb}k\zb$ and $\wh M'\ot_{\cO\zb}k\zb$ are free $k\zb$-modules of rank $\rk \ul{\wh M}=\rk\ul{\wh M'}$ and $\frM_f\ot_{\cO}k$ is annihilated by $z^n$, the elementary divisor theorem implies that $f\ot\id_k$ is injective.
Hence $\Tor_1^\cO(k,\frM_f)=0$.
Applying the snake lemma to the diagram 
\[
\xymatrix{
0 \ar[r]& \wh M\ar[r]^-f\ar[d]^-{z^n} & \wh M' \ar[r]\ar[d]^-{z^n}& \frM_f \ar[r]\ar[d]^{z^n}& 0 \\
0 \ar[r]& \wh M\ar[r]^-f & \wh M' \ar[r]& \frM_f \ar[r]& 0,
}
\]
we see that  $\frM_f=\Cok(f\col \wh M/z^n\wh M \ra \wh M'/z^n\wh M')$ and hence $\frM_f$ is finitely presented over $\cO$.
Thus Nakayama's lemma shows that $\frM_f$ is finite free over $\cO$.
If $\ul{\wh M'}$ is effective of height $\leq h$, then $\ul\frM_f$ is effective. 
The surjection $\wh{M'}/\tau_{\wh M'}(\wh\sig\ast {\wh M'}) \ra \frM_f/\tau_f(\wh\sig\ast \frM_f)$ shows that $\ul{\frM}_f$ is of height $\leq h$.
\end{proof}

\begin{remark}
Conversely, it is known that any torsion local shtuka $\ul{\frM}$ over $\cO$ is of the form $\ul{\frM}=\ul{\frM}_f$ for  some isogeny $f\colon \ul {\wh M} \ra \ul{\wh M'}$ of local shtukas over $\cO$; see \cite[Lemma 3.7.5]{HK20}.
\end{remark}

\begin{example}
For a local shtuka  $\ul {\wh M}=(\wh M,\tau_{\wh M})$ over $\cO$ and 
any positive integer $n$, the map $f_n\colon {\wh M} \ra {\wh M}; m\mapsto z^nm$  gives an isogeny $f_n \colon \ul {\wh M} \ra \ul {\wh M}$.
Thus we get the torsion local shtuka $\ul{{\wh M}}/z^n\ul{{\wh M}}:=({\wh M}/z^n{\wh M},\tau_{\wh M}\pmod {z^n})$.
\end{example}

\begin{lemma}\label{lem.modz}
Let $f\colon \ul {\wh M} \ra \ul{\wh M'}$ be an isogeny of local shtukas over $\cO$.
Then for the torsion local shtuka $\ul{\frM}_f $ induced by $f$, we have $T_\frp\ul{\frM}_f \cong \Cok(T_\frp f)$ as torsion $A_\frp$-modules with discrete $G_L$-action, where $T_\frp f\colon T_\frp\ul{\wh M'} \ra T_\frp\ul{\wh M}$ is the map induced by $f$.
In particular, for a local shtuka $\ul{\wh M}$,   we have a natural isomorphism $T_\frp\ul{\wh M}\ot_{A_\frp}\left(A_\frp/\frp^nA_\frp\right) \cong T_\frp\left(\ul{\wh M}/z^n\ul{\wh M}\right)$.
\end{lemma}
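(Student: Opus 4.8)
The plan is to realise $\ul{\frM}_f$ as the honest cokernel of $f$ in a short exact sequence, pass to (dual) Tate modules term by term, and then reconcile the two Tate-module functors by a little homological algebra over the discrete valuation ring $A_\frp$.

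\emph{Step 1: base change of the defining sequence.} Since the isogeny $f$ is injective, there is a short exact sequence of $\cO\zb$-modules, compatible with the $\tau$'s and the $G_L$-actions,
\[
0 \lra \wh M \overset{f}{\lra} \wh M' \lra \frM_f \lra 0.
\]
I would first check that it stays exact after $-\ot_{\cO\zb}L^\sep\zb$, i.e.\ that $\Tor_1^{\cO\zb}(\frM_f, L^\sep\zb)=0$. Choosing $n$ with $z^n\frM_f=0$, so that $\frM_f$ is a module over $R_n:=\cO\zb/z^n\cO\zb=\cO[z]/(z^n)$, and using that $z$ is a nonzerodivisor on $L^\sep\zb$, the change-of-rings spectral sequence collapses to $\Tor_\bullet^{\cO\zb}(\frM_f, L^\sep\zb)\cong \Tor_\bullet^{R_n}(\frM_f, L^\sep\zb/z^nL^\sep\zb)$; and $L^\sep\zb/z^nL^\sep\zb\cong R_n\ot_\cO L^\sep$ is flat over $R_n$ because $L^\sep$ is flat over the discrete valuation ring $\cO$. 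Hence the higher $\Tor$'s vanish.

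\emph{Step 2: applying $\wc T_\frp$.} Taking $\tau$-invariants of the exact sequence $0\to \wh M\ot_{\cO\zb}L^\sep\zb\to \wh M'\ot_{\cO\zb}L^\sep\zb\to \frM_f\ot_{\cO\zb}L^\sep\zb\to 0$ yields, by the very definition of the dual Tate modules, a complex $0\to \wc T_\frp\ul{\wh M}\overset{\wc T_\frp f}{\to}\wc T_\frp\ul{\wh M'}\to \wc T_\frp\ul{\frM}_f\to 0$ of $A_\frp$-modules with $G_L$-action. To see it is exact I would base-change it along $A_\frp\to L^\sep\zb$ and apply the comparison isomorphism (\ref{eq.toret}) for $\ul{\frM}_f$ together with its analogue for the local shtukas $\ul{\wh M}$, $\ul{\wh M'}$ (cf.\ \cite{HK20}, \cite{BH11}): this identifies the base-changed complex with the exact sequence of Step 1, so since $A_\frp\to L^\sep\zb$ is faithfully flat the sequence
\[
0\lra \wc T_\frp\ul{\wh M}\overset{\wc T_\frp f}{\lra}\wc T_\frp\ul{\wh M'}\lra \wc T_\frp\ul{\frM}_f\lra 0
\]
is exact. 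In particular $\wc T_\frp\ul{\frM}_f\cong \Cok(\wc T_\frp f)$, naturally in $f$.

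\emph{Step 3: dualising.} Since $\wc T_\frp\ul{\wh M}$ and $\wc T_\frp\ul{\wh M'}$ are free $A_\frp$-modules of rank $r=\rk\ul{\wh M}$ and $\Cok(\wc T_\frp f)$ is torsion, applying $\Hom_{A_\frp}(-,A_\frp)$ to the sequence of Step 2 shows $T_\frp f=\Hom_{A_\frp}(\wc T_\frp f,A_\frp)$ is injective and gives a natural isomorphism $\Ext^1_{A_\frp}(\Cok \wc T_\frp f, A_\frp)\cong \Cok(T_\frp f)$, while $0\to A_\frp\to Q_\frp\to Q_\frp/A_\frp\to 0$ gives, for every torsion $A_\frp$-module $C$, a natural isomorphism $\Hom_{A_\frp}(C,Q_\frp/A_\frp)\cong \Ext^1_{A_\frp}(C,A_\frp)$. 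Chaining these with $C=\wc T_\frp\ul{\frM}_f$,
\[
T_\frp\ul{\frM}_f=\Hom_{A_\frp}(\wc T_\frp\ul{\frM}_f,Q_\frp/A_\frp)\cong \Ext^1_{A_\frp}(\wc T_\frp\ul{\frM}_f,A_\frp)\cong \Cok(T_\frp f),
\]
and every isomorphism is $G_L$-equivariant because it is natural in $f$. For the last assertion I would specialise to the isogeny $f_n\colon\ul{\wh M}\overset{z^n}{\to}\ul{\wh M}$, with cokernel $\ul{\wh M}/z^n\ul{\wh M}$: then $T_\frp f_n$ is multiplication by $z^n$ on $T_\frp\ul{\wh M}$, and since $\frp^nA_\frp=z^nA_\frp$ we obtain $T_\frp(\ul{\wh M}/z^n\ul{\wh M})\cong T_\frp\ul{\wh M}/z^nT_\frp\ul{\wh M}=T_\frp\ul{\wh M}\ot_{A_\frp}(A_\frp/\frp^nA_\frp)$.

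\emph{Main obstacle.} Everything except Step 2 is formal bookkeeping with $\Tor$ and $\Ext$ over the discrete valuation ring $A_\frp$. The one genuinely non-formal point is the exactness in Step 2 — equivalently, surjectivity of $\wc T_\frp\ul{\wh M'}\to\wc T_\frp\ul{\frM}_f$, i.e.\ exactness of the \'etale realisation functor on such sequences — and the cleanest route to it runs through the comparison isomorphism (\ref{eq.toret}); so the point to be careful about is that this isomorphism, recorded in the excerpt for torsion local shtukas, is equally available for the local shtukas $\ul{\wh M}$ and $\ul{\wh M'}$.
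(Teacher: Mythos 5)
Your proposal is correct, and it fills in details that the paper simply delegates: the paper's entire proof is a citation of Kim's thesis (\cite[Lemma 5.1.9]{Ki09}, applied to the map $f\colon \wh M\ot_{\cO\zb}L\zb\ra\wh M'\ot_{\cO\zb}L\zb$). Your Steps 1 and 3 are clean homological algebra over $A_\frp$ and over $\cO\zb$, and the $\Tor$-vanishing argument in Step 1 is fine (one can also see it more directly: $f\ot\id_{L^\sep\zb}$ is an injective map of free $L^\sep\zb$-modules of equal rank because $\det f\neq 0$ in the domain $L^\sep\zb$). The one input you correctly single out is the non-torsion analogue of the comparison isomorphism (\ref{eq.toret}), i.e.\ the $G_L$-equivariant, $\tau$-compatible isomorphism $\wc T_\frp\ul{\wh M}\ot_{A_\frp}L^\sep\zb\lrai\wh M\ot_{\cO\zb}L^\sep\zb$ for a local shtuka; this is indeed available in the sources the paper relies on (see \cite{HK20}, \S 3, and \cite{Ki09}), and it, together with the faithful flatness of the local homomorphism $A_\frp\ra L^\sep\zb$, is exactly what drives the right-exactness in your Step 2. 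So your route is a self-contained substitute for the black-box reference: what you lose is brevity, and what you gain is that the reader sees explicitly which comparison isomorphism and which flatness facts carry the weight. The specialization to $f_n=z^n$ at the end matches the paper's stated ``in particular'' clause exactly, using $\frp^nA_\frp=z^nA_\frp$.
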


\begin{proof}
Applying   \cite[Lemma 5.1.9]{Ki09} to the map $f\colon \wh M\ot_{\cO\zb}L\zb \ra \wh M'\ot_{\cO\zb}L\zb$ induced by $f$, we obtain the conclusion.
\end{proof}

\begin{proposition}\label{prop.torcat}
Let $\ul\frM$ be a torsion local shtuka over $\cO$.
Suppose that there is an exact sequence 
\[
0\lra T' \lra T_\frp\ul\frM \lra T''\lra 0
\]
of torsion $A_\frp$-modules of finite length with discrete $G_L$-action. 
Then the sequence is induced by an exact sequence 
\[
0\lra \ul\frM'' \lra \ul\frM \lra \ul\frM' \lra 0
\]
of torsion local shtukas over $\cO$ via  the functor $\ul\frM \mapsto T_\frp\ul\frM$.
Moreover, if $\ul\frM$ is effective of height $\leq h$, then so are $\ul\frM'$ and $\ul\frM''$.
\end{proposition}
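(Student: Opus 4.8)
The plan is to exploit the anti-equivalence between torsion local shtukas and torsion $A_\frp$-modules with discrete $G_L$-action, which is essentially the content of the equi-characteristic Fontaine theory of \cite{Ki09} and \cite{HK20}. First I would recall that, as noted in the remark following the definition of $\wc T_\frp$, any torsion local shtuka over $\cO$ arises as $\ul\frM_f$ for some isogeny $f\colon \ul{\wh M}\ra\ul{\wh M'}$ of local shtukas (by \cite[Lemma 3.7.5]{HK20}), and that the functor $\ul\frM\mapsto T_\frp\ul\frM$ is contravariant and exact. The crucial input is full faithfulness of this functor on the relevant category (this is the torsion version of \cite[Proposition 3.4.6]{HK20}, or follows from the results of \cite{Ki09} combined with the isomorphism \eqref{eq.toret}): a morphism of torsion $A_\frp[G_L]$-modules lifts uniquely to a morphism of torsion local shtukas.

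Given this, the argument proceeds as follows. Starting from the exact sequence $0\to T'\to T_\frp\ul\frM\to T''\to 0$, I would set $T_\frp\ul\frM=:T$ and take $\frN\subset\frM$ to be defined as follows: the surjection $T\thra T''$ corresponds, by the equivalence, to an injection $\ul\frM''\hookrightarrow\ul\frM$ of torsion local shtukas whose associated map on Tate modules is $T\thra T''$ --- concretely, one constructs $\frM''$ as a $\wh\sig$-stable $\cO\zb$-submodule of $\frM$ by descent from $L^\sep\zb$ using the isomorphism \eqref{eq.toret}: the kernel $\frM'\otimes_{\cO\zb}L^\sep\zb$ of the $L^\sep\zb$-linear extension of $T\thra T''$ is $G_L$-stable and $\tau_\frM$-stable, hence descends to an $\cO\zb$-submodule $\frM'\subset\frM$ by faithfully flat/Galois descent (over $\cO\zb$ this amounts to taking $G_L$-invariants of a $z$-power-torsion module after $\otimes L^\sep\zb$). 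One then checks $\frM'$ is finitely presented over $\cO\zb$, $z$-power torsion, and --- the point that needs care --- \emph{finite free over $\cO$}. The quotient $\frM'':=\frM/\frM'$ inherits $\tau_{\frM''}$ from $\tau_\frM$, and one verifies that $\ul\frM'$, $\ul\frM''$ are torsion local shtukas fitting in $0\to\ul\frM''\to\ul\frM\to\ul\frM'\to 0$ (note the contravariance: the sub-object of $\ul\frM$ maps to the quotient $T''$, and the quotient of $\ul\frM$ to the sub $T'$), and that applying $\wc T_\frp$ (equivalently $T_\frp$) recovers the original sequence, using exactness of $\wc T_\frp$ and the identification \eqref{eq.toret}.

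The last sentence --- if $\ul\frM$ is effective of height $\le h$ then so are $\ul\frM'$ and $\ul\frM''$ --- follows by the same bookkeeping as in the proof of the lemma on $\ul\frM_f$ above: for the sub-object $\ul\frM''\subset\ul\frM$, effectiveness and the height bound are inherited because $\tau_\frM(\wh\sig\ast\frM)\subset\frM$ and $(\zz)^h\frM\subset\tau_\frM(\wh\sig\ast\frM)$ restrict to $\frM''$; for the quotient $\ul\frM'=\frM/\frM''$ one uses the induced surjection $\frM/\tau_\frM(\wh\sig\ast\frM)\thra\frM'/\tau_{\frM'}(\wh\sig\ast\frM')$ together with the snake-lemma argument to see $\tau_{\frM'}(\wh\sig\ast\frM')\subset\frM'$ and $(\zz)^h\frM'\subset\tau_{\frM'}(\wh\sig\ast\frM')$.

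I expect the main obstacle to be the finiteness/freeness claim: showing that the descended submodule $\frM'\subset\frM$ is finite free over $\cO$ (equivalently, that $\frM''=\frM/\frM'$ is finite free over $\cO$, i.e. $\Tor_1^\cO(k,\frM'')=0$). The natural route is to reduce mod $z^n$ (where $z^n$ kills $\frM$), present $\frM$ as a cokernel of an isogeny $f\colon\ul{\wh M}\ra\ul{\wh M'}$, and run the same $\otimes_\cO k$ plus elementary-divisor-theorem argument used in the proof of the lemma on $\ul\frM_f$ --- but here one must first arrange compatible lattices so that $\frM'$ and $\frM''$ are themselves visibly cokernels of isogenies, which is where \cite[Lemma 3.7.5]{HK20} and the full faithfulness are used to close the loop. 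Alternatively, one can avoid this by simply \emph{invoking} the torsion analogue of the equivalence of categories directly: the category of torsion $A_\frp[G_L]$-modules of finite length in the essential image of $T_\frp$ is abelian with kernels and cokernels computed compatibly on both sides, which immediately yields the proposition; I would present the descent construction as the proof of that abelian-subcategory statement.
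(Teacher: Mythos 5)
Your approach is essentially the paper's: the paper too cites \cite[Proposition 3.7.15]{HK20} (or \cite[Proposition 9.2.2]{Ki09}) for the statement and then constructs $\ul\frM'$, $\ul\frM''$ by Galois descent from $L^\sep\zb$ via the isomorphism \eqref{eq.toret}, taking $\frM'$ to be the image $f(\frM)$ of $\frM$ under the induced surjection $\frM\ot_{\cO\zb}L\zb\thra(\wc T'\ot_{A_\frp}L^\sep\zb)^{G_L}$ and $\frM''$ its kernel. One bookkeeping caveat: \eqref{eq.toret} identifies $\frM\ot L^\sep\zb$ with $\wc T_\frp\ul\frM\ot L^\sep\zb$ (the Pontryagin dual), so the concrete descent step should run on the dual sequence $0\to\wc T''\to\wc T\to\wc T'\to 0$ rather than on $T\thra T''$ as you wrote, and your labels $\frM'$ and $\frM''$ drift partway through the paragraph --- but these are only notational slips and the underlying argument is sound.
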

\begin{proof}
This is \cite[Proposition 3.7.15]{HK20} (or \cite[Proposition 9.2.2]{Ki09}), and so we just show how to give $\ul\frM'$ and $\ul\frM''$.
We set  $\wc T=\wc T_\frp\ul\frM$, $\wc T'=\Hom_{A_\frp}(T', Q_\frp/A_\frp)$, and $\wc T''=\Hom_{A_\frp}(T'', Q_\frp/A_\frp)$.
Taking $G_L$-fixed parts of the inverse of (\ref{eq.toret}), we have an isomorphism 
\[
\frM\ot_{\cO\zb}L\zb \lrai (\wc T\ot_{A_\frp}L^\sep\zb)^{G_L}.
\]
As the composition with $ (\wc T\ot_{A_\frp}L^\sep\zb)^{G_L}\thra  (\wc T'\ot_{A_\frp}L^\sep\zb)^{G_L}$, we obtain a surjection 
\[
f\colon \frM\ot_{\cO\zb}L\zb \thra (\wc T'\ot_{A_\frp}L^\sep\zb)^{G_L}.
\]
Set $\frM'=f(\frM)$.
Then $\tau_\frM$ induces an isomorphism $\tau_{\frM'}\colon \hsig\ast\frM'[\zzf]\rai \frM'[\zzf]$ and the pair $\ul\frM'=(\frM',\tau_{\frM'})$
is a torsion local shtuka over $\cO$ with $T_\frp\ul\frM' \cong T'$.
Next let $\frM''$ be the kernel of $f|_\frM\col \frM \thra \frM'$ and $\tau_{\frM''}=\tau_{\frM}|_{\frM''}$.
Then $\ul\frM''=(\frM'',\tau_{\frM''})$ is a torsion local shtuka over $\cO$ with $T_\frp\ul\frM'' \cong T''$.
\end{proof}

\subsection{Tame inertia weights}

Let $I_L \subset G_L$ be the inertia subgroup, so that $I_L\cong \Gal(L^\sep/L^\ur)$.
Let $I_L^\wild \subset I_L$ be the wild inertia subgroup, that is, the maximal pro-$p$ subgroup of $I_L$.
Define the {\it tame inertia group} by $I_L^\tame:=I_L/I_L^\wild\cong \Gal(L^\tame/L^\ur)$, where
$L^\tame$ is the maximal tamely ramified extension of $L^\ur$.  
We note that $I_L^\tame$ is pro-cyclic.

Let $\bF$ be an intermediate finite subfield of $k/\bF_\frp$ and set $r=[\bF:\bF_\frp]$.
Recall that we now set $\wh q=\#\bF_\frp$, so that $\#\bF={\wh q}^r$.
Denote by $\mu_{{\wh q}^r-1}(L)$ the group of $({\wh q}^r-1)$-st roots of unity in $L$ and choose an isomorphism $\mu_{{\wh q}^r-1}(L)\rai \bF\mal$. 
Let  $\pi \in L$ be a uniformizer.
Denote by $L_r^{(\pi)}$ the finite Galois extension of $L$ generated by the roots of $X^{{\wh q}^r-1}-\pi$.
Then $L_r^{(\pi)}/L$ is a totally tamely ramified extension whose ramification index is ${\wh q}^r-1$.
For a root $\pi_r \in L_r^{(\pi)}$ of $X^{{\wh q}^r-1}-\pi$, we obtain the surjective character
\[
\theta_r=\theta_{r/L}\col I_L \thra \mu_{{\wh q}^r-1}(L)\rai \bF\mal;\es\es s\mapsto \frac{s(\pi_r)}{\pi_r},
\]
which is called a {\it fundamental character of level $r$} (cf.\ \cite{Ser72}).
It factors through $I_L^\tame$ and does not depend on the choice of $\pi$.
For a field extension $L'/L$ with finite ramification index $e_{L'/L}$, we have 
$
\theta_{r/L}|_{I_{L'}}=(\theta_{r/L'})^{e_{L'/L}}
$ by construction.

Let  $\psi\col I_L\ra \bF\mal$ be an $\bF\mal$-valued character. 
Then it factors through $I_L^\tame$ and there is an integer $0 \leq n \leq {\wh q}^r-1$ such that $\psi=\theta_r^n$. 
If the integer $n$  decomposes  $n=n_0{\wh q}^{r-1}+n_1{\wh q}^{r-2}+\cdots+n_{r-1}$ with integers $0\leq n_i \leq \wh q-1$, then  $n_0,\ldots,n_{r-1}$ are independent of the choice of $\mu_{{\wh q}^r-1}(L)\rai \bF\mal$.
\begin{definition}
We call these integers $n_0,\ldots,n_{r-1}$ the {\it tame inertia weights} of $\psi\col I_L\ra \bF\mal$ and denote by $\TI(\psi)=\{n_0,\ldots,n_{r-1}\}$ the multi-set consisting of them.
\end{definition}

To define the tame inertia weights of $\frp$-adic representations of $G_L$, we need the following:

\begin{lemma}\label{lem.fund}
An irreducible $\bF_\frp$-representation $\bar V$ of $I_L$ becomes a one-dimensional representation of $I_L^\tame$ over a finite extension $\bF$ of $\bF_\frp$ with $[\bF:\bF_\frp]=\dim_{\bF_\frp}\bar V$. 
\end{lemma}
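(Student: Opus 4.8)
The plan is to reduce to the structure theory of simple modules over finite fields and the fact that $I_L^\tame$ is pro-cyclic. First I would note that the wild inertia $I_L^\wild$ is a pro-$p$ group, while $\bar V$ is an $\bF_\frp$-vector space and $\# \bF_\frp$ is a power of $p$; hence the action of $I_L^\wild$ on $\bar V$ factors through a finite $p$-group acting on an $\bF_p$-vector space, and a standard averaging/fixed-point argument shows that $\bar V^{I_L^\wild} \neq 0$. Since $I_L^\wild$ is normal in $I_L$, the subspace $\bar V^{I_L^\wild}$ is $I_L$-stable, so by irreducibility $\bar V = \bar V^{I_L^\wild}$; that is, the $I_L$-action on $\bar V$ factors through $I_L^\tame$.

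Next I would invoke the fact that $I_L^\tame$ is pro-cyclic, so the image of $I_L^\tame$ in $\mathrm{Aut}_{\bF_\frp}(\bar V) = \GL_{\bF_\frp}(\bar V)$ is a (topologically cyclic, hence) commutative group; let $g$ be a topological generator and let $\bF := \bF_\frp[g] \subset \End_{\bF_\frp}(\bar V)$ be the $\bF_\frp$-subalgebra it generates. Because $\bar V$ is an irreducible $\bF_\frp[I_L^\tame]$-module and the action factors through $\bF_\frp[g]$, the module $\bar V$ is in particular a simple $\bF$-module; but then $\bF$ is a field (a commutative ring admitting a faithful simple module is a field — the annihilator of any nonzero vector is a maximal ideal which must be zero by faithfulness and simplicity), so $\bF$ is a finite field extension of $\bF_\frp$, and $\bar V$, being simple over the field $\bF$, is one-dimensional over $\bF$. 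This exhibits $\bar V$ as a one-dimensional representation of $I_L^\tame$ over the finite field $\bF$.

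Finally I would compute $[\bF:\bF_\frp]$. Since $\dim_\bF \bar V = 1$ we have $\dim_{\bF_\frp} \bar V = [\bF:\bF_\frp]\cdot \dim_\bF \bar V = [\bF:\bF_\frp]$, which is exactly the claimed equality $[\bF:\bF_\frp] = \dim_{\bF_\frp}\bar V$.

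The main obstacle is the first step — justifying that the wild inertia acts trivially, i.e. that $\bar V^{I_L^\wild}\neq 0$ — since one must handle the pro-$p$ (profinite) nature of $I_L^\wild$ carefully: the representation $\bar V$ is finite, so the action factors through a finite quotient of $I_L^\wild$, which is a finite $p$-group, and then the classical lemma that a nontrivial finite $p$-group acting on a nonzero $\bF_p$-vector space has nonzero fixed points applies. Everything after that is essentially the observation that an irreducible module over a pro-cyclic (hence commutative) group is one-dimensional over a field extension, which is routine module theory.
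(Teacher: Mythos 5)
Your proof is correct, and it differs from the paper's argument in one meaningful technical respect. The paper defines $\bF$ as the commutant $\End_{\bF_\frp[I_L^\tame]}(\bar V)$, deduces from Schur's lemma that it is a finite division algebra, and then invokes Wedderburn's little theorem to conclude it is a field; only afterward does it use commutativity of $I_L^\tame$ to place the image of the group ring inside $\bF$ and conclude $\dim_\bF \bar V = 1$. You instead take $\bF$ to be the $\bF_\frp$-subalgebra of $\End_{\bF_\frp}(\bar V)$ generated by (the image of) a topological generator $g$ of $I_L^\tame$, observe this is a commutative ring acting faithfully on the simple module $\bar V$, and deduce it is a field by elementary commutative algebra (the annihilator of a nonzero vector is a maximal ideal which must vanish by faithfulness). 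This avoids Wedderburn's theorem entirely and is, if anything, more self-contained; the resulting $\bF$ coincides with the paper's commutant, since once $\bar V$ is one-dimensional over $\bF_\frp[g]$, every $\bF_\frp[I_L^\tame]$-linear endomorphism is multiplication by an element of $\bF_\frp[g]$. You also spell out the fixed-point argument showing $I_L^\wild$ acts trivially (a nontrivial finite $p$-group acting on a nonzero $\bF_p$-vector space has nonzero invariants, plus normality of $I_L^\wild$ in $I_L$ and irreducibility), which the paper states in one line without detail; that fleshing-out is correct and welcome. One small point worth making explicit in a final write-up: irreducibility of $\bar V$ as an $I_L$-representation passes to irreducibility as an $I_L^\tame$-representation precisely because the action factors through the quotient, so $I_L^\tame$-stable subspaces are the same as $I_L$-stable subspaces.
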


\begin{proof}
By irreducibility, the
$I_L^\wild$-action on $\bar V$ is trivial and so 
 the action of $I_L$ factors through $I_L^\tame$.
 Thus we obtain a group homomorphism $\rho\col I_L^\tame \ra \GL_{\bF_\frp}(\bar V)$.
Let us consider the commutant  $\bF:=\End_{\bF_\frp[I_L^\tame]}(\bar V) \subset \GL_{\bF_\frp}(\bar V)$.
Then 
Schur's lemma implies that $\bF$ is a finite division algebra, so it is a finite field.
Since $I_L^\tame$ is abelian, the image $\rho(I_L^\tame)$ is contained in the commutant $\bF$.
Thus we see by irreducibility that  $\bar V$ is  one-dimensional over  $\bF$ and so $[\bF:\bF_\frp]=\dim_{\bF_\frp}\bar V$.
\end{proof}

Let $V$ be a $\frp$-adic representation of $G_L$.
Take a $G_L$-stable $A_\frp$-lattice $T \subset V$ and consider the $\bF_\frp$-representation $\bar V:=T/\frp T$ of $G_L$.
Then by the Brauer-Nesbitt theorem,  the isomorphism class of the semi-simplification $\bar V^\ss=\bigoplus_i \bar V_i$ is independent of the choice of $T$.
We see that each simple factor $\bar V_i$ is irreducible as a representation of $I_L$ because the $I_L$-fixed part ${\bar V}_i^{I_L}$ is a subrepresentation of $\bar V_i$.
Hence Lemma \ref{lem.fund} implies that $I_L^\tame$-action on $\bar V^\ss$ is described by 
a sum of characters $\psi_i\col I_L^\tame \ra \bF_i\mal$, where $\bF_i$ are some finite extensions of $\bF_\frp$. 
Replacing $L$ by a suitable finite unramified extension, we may assume that each $\bF_i$ can be embedded into $k$ and so
each $\psi_i$ is a power of the fundamental character $\theta_{r_i}:I_L^\tame \ra \bF_i\mal$ of level $r_i=[\bF_i:\bF_\frp]$.

\begin{definition}
We define the multi-set of {\it tame inertia weight} $\TI(V)$ of $V$ 
by the union  of the $\TI(\psi_i)$ for all $i$.
\end{definition}
\subsection{Torsion local shtukas with coefficients} 

To study tame inertia weights of Galois representations coming from local shtukas, we shall give an overview of the theory on torsion local shtukas with ``coefficients''. 
Note that we only consider the finite field coefficients case; see \cite{Ki09} and \cite{HK20} in general.

As before,  let $\bF$ be an intermediate finite field of $k/\bF_\frp$ with degree $r=[\bF:\bF_\frp]$.
We regard $\bF$ as an $A_\frp$-algebra via $A_\frp=\bF_\frp\zb \thra \bF_\frp \subset \bF; z \mapsto 0$.
Let denote  $\cO\zb_\bF:=\cO\zb\ot_{A_\frp}\bF$ and define $\wh \sig \col \cO\zb_\bF \ra \cO\zb_\bF$ by 
$\bF$-linearly extending $\wh \sig \col \cO\zb \ra \cO\zb$.

\begin{definition}
A {\it torsion local $\bF$-shtuka} over $\cO$ is a pair $\ul\frM_\bF=(\frM_\bF,\tau_{\frM_\bF})$ consisting of a free $\cO\zb_\bF$-module of finite rank  and an $\cO\zb_\bF$-isomorphism $\tau_{\frM_\bF}\col \wh\sig\ast\frM_\bF[\zzf]\rai \frM_\bF[\zzf]$.
By {\it $\cO\zb_\bF$-rank} of $\ul\frM_\bF$, we mean the rank of the $\cO\zb_\bF$-module $\frM_\bF$.
We consider the obvious notion of morphisms.

We may regard $\ul\frM_\bF$ as a torsion local shtuka over $\cO$ by forgetting the $\bF$-action.
We say that $\ul\frM_\bF$ is {\it effective} (resp.\ {\it effective of height $\leq h$} for some $h\geq 0$) if it is so as a torsion local shtuka.
\end{definition}

\begin{remark}
By  \cite[Lemma 3.7.24]{HK20}, any torsion local shtuka $\ul\frM=(\frM,\tau_{\frM})$ equipped with a $\tau_\frM$-compatible $\bF$-action is a torsion local $\bF$-shtuka, that is, the underlying module $\frM$ is free over $\cO\zb_\bF$. 
\end{remark}

\begin{lemma}
For a torsion local $\bF$-shtuka $\ul\frM_\bF$ over $\cO$, we see that $T_\frp\ul\frM_\bF$ becomes an $\bF$-vector space of dimension equal to the $\cO\zb_\bF$-rank of $\ul\frM_\bF$ such that the $\bF$-action on $T_\frp\ul\frM_\bF$ is induced by that on $\ul\frM_\bF$. 
\end{lemma}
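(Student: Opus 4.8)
The plan is to carry the $\bF$-module structure through the construction of $T_\frp$ and then reduce the rank count to the length formula for torsion local shtukas recalled in \S\ref{ss.shtuka}. First I would observe that the $\bF$-action on $T_\frp\ul\frM_\bF$ is automatic: $\frM_\bF$ is a module over $\cO\zb_\bF$, so it carries an $\bF$-action, and since $\wh\sig$ fixes $\bF$ pointwise by construction, both $\tau_{\frM_\bF}$ and the linearization map $m\mapsto\wh\sig\ast m$ are $\bF$-linear; as this $\bF$-action also commutes with the $G_L$-action on $\frM_\bF\ot_{\cO\zb}L^\sep\zb$ (which acts only through $L^\sep\zb$), the $\tau$-fixed subspace $\wc T_\frp\ul\frM_\bF$ is $\bF$-stable, and so is its $A_\frp$-dual $T_\frp\ul\frM_\bF=\Hom_{A_\frp}(\wc T_\frp\ul\frM_\bF,Q_\frp/A_\frp)$. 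Both are thus $\bF$-vector spaces carrying a $G_L$-action inherited from $\ul\frM_\bF$, which settles the last assertion.

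For the dimension, write $r=[\bF:\bF_\frp]$ and let $s$ be the $\cO\zb_\bF$-rank of $\ul\frM_\bF$. I would first unwind the coefficient ring: under the inclusion $A_\frp=\bF_\frp\zb\hra\cO\zb$ the variable $z$ maps to $z$, whereas $\bF$ is made an $A_\frp$-algebra via $z\mapsto 0$, so
\[
\cO\zb_\bF=\cO\zb\ot_{A_\frp}\bF=(\cO\zb/z\cO\zb)\ot_{\bF_\frp}\bF=\cO\ot_{\bF_\frp}\bF,
\]
which is free of rank $r$ over $\cO$ and is annihilated by $z$. Hence $\frM_\bF$ is free of rank $rs$ over $\cO$ and killed by $z$, and therefore so are $\frM_\bF\ot_{\cO\zb}L^\sep\zb$ and its $\tau$-fixed submodule $\wc T_\frp\ul\frM_\bF$. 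Now \cite[Proposition 3.7.8]{HK20} gives $\length_{A_\frp}\wc T_\frp\ul\frM_\bF=\rk_\cO\frM_\bF=rs$; since $z$ generates the maximal ideal of $A_\frp$ and annihilates $\wc T_\frp\ul\frM_\bF$, the latter is an $\bF_\frp$-vector space of dimension $rs$, whence $\dim_\bF\wc T_\frp\ul\frM_\bF=s$.

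Finally, because $\wc T_\frp\ul\frM_\bF$ is killed by $z$, the dual collapses to $\Hom_{A_\frp}(\wc T_\frp\ul\frM_\bF,Q_\frp/A_\frp)=\Hom_{\bF_\frp}(\wc T_\frp\ul\frM_\bF,\bF_\frp)$, the relevant target being the one-dimensional $z$-torsion submodule of $Q_\frp/A_\frp$; with $\bF$ acting through the source, this is the $\bF$-linear dual of $\wc T_\frp\ul\frM_\bF$ up to a one-dimensional twist, so $\dim_\bF T_\frp\ul\frM_\bF=\dim_\bF\wc T_\frp\ul\frM_\bF=s$, as required. I expect the only delicate point to be the bookkeeping among the $A_\frp$-, $\bF_\frp$- and $\bF$-module structures, in particular checking that $z$ annihilates $\cO\zb_\bF$ (hence $\frM_\bF$ and $\wc T_\frp\ul\frM_\bF$); once that is settled, everything reduces to the already-cited length formula and an elementary duality over the discrete valuation ring $A_\frp$.
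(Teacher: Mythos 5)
Your proof is correct, but it is \emph{not} a rewrite of the paper's argument: the paper does not give a proof at all, it simply cites \cite[Lemma 3.7.23\,(a)]{HK20}. So you are supplying an independent argument. The structure of yours is sound. The $\bF$-linearity of $m\mapsto\wh\sig\ast m$ and of $\tau_{\frM_\bF}$, and the $\bF$-stability of $\wc T_\frp\ul\frM_\bF$, all follow as you say because $\wh\sig$ is defined as the $\bF$-linear extension of $\wh\sigma$ on $\cO\zb$ and because $G_L$ acts only through $L^\sep\zb$. Your bookkeeping of the coefficient ring is also right: the structure map $A_\frp\to\bF$ kills $z$, whereas $A_\frp\to\cO\zb$ sends $z\mapsto z$, so in $\cO\zb\ot_{A_\frp}\bF$ one has $z\ot 1=1\ot z\cdot 1=0$; hence $\cO\zb_\bF\cong\cO\ot_{\bF_\frp}\bF$ is $z$-torsion and $\cO$-free of rank $r=[\bF:\bF_\frp]$, whence $\frM_\bF$ is $\cO$-free of rank $rs$, and the length formula of \cite[Prop.\ 3.7.8]{HK20} then yields $\dim_{\bF_\frp}\wc T_\frp\ul\frM_\bF=rs$ and $\dim_{\bF}\wc T_\frp\ul\frM_\bF=s$. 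The only point I would tighten is the final duality sentence: since $Q_\frp/A_\frp$ is not an $\bF$-module, the $\bF$-action on $T_\frp\ul\frM_\bF=\Hom_{\bF_\frp}(\wc T_\frp\ul\frM_\bF,\bF_\frp)$ is by precomposition only, i.e.\ $(b\cdot f)(x)=f(bx)$; this is automatically $\bF$-linear and gives $\dim_\bF T_\frp\ul\frM_\bF=\dim_{\bF_\frp}\wc T_\frp\ul\frM_\bF/[\bF:\bF_\frp]=s$ directly, so the ``one-dimensional twist'' (identifying $\Hom_{\bF_\frp}(-,\bF_\frp)$ with $\Hom_\bF(-,\bF)$ via the trace form) is unnecessary for the dimension count, though it is a valid way to see the iso-type. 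In short: your proof is a correct, self-contained replacement for the black-box citation; what it buys is transparency, what it costs is the verification you have essentially carried out above.
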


\begin{proof}
This is  \cite[Lemma 3.7.23 (a)]{HK20}.
\end{proof}

For a uniformizer $\pi \in L$,
we identify $L=k\dpl\pi\dpr$ and 
$\cO=k\llbracket \pi \rrbracket$, and fix an embedding $\bF\hra k \subset \cO$.
Set $e:=\ord_L(\zeta)$.
Canonically extending $\gamma\col A\subset Q \ra K$ to the embedding $\gamma\col Q_\frp \ra L$, we see that the integer $e$ is the ramification index of $L/\gamma(Q_\frp)$.  
Let $h\geq 0$.
For any $\lam \in k\mal$ and any $r$-tuple $\ul n=(n_0,n_1,\ldots,n_{r-1})$ of integers with $0 \leq n_i \leq eh$, 
we obtain an important effective torsion local $\bF$-shtuka $\ul\frM_{\SSC(\lam,\ul n)}=(\frM_{\SSC(\lam,\ul n)},\tau)$ of $\cO\zb_\bF$-rank one  as follows.
We view $\cO$ as an $\cO\zb$-algebra via $z\mapsto 0$.
We set 
\[
\frM_{\SSC(\lam,\ul{n})}:=\bigoplus_{i=0}^{r-1}\cO\cdot {\bm e}_i
\]
and define $\tau\col \hsig\ast \frM_{\SSC(\lam,\ul n)} \ra \frM_{\SSC(\lam,\ul n)}$ by
\begin{align*}
\tau(\hsig\ast{\bm e}_i) &:= \pi^{n_i}{\bm e}_{i+1} \es\es\es \mbox{if}\ i \neq r-1,\\
\tau(\hsig\ast{\bm e}_{r-1}) &:=\lam\pi^{n_{r-1}}{\bm e}_0.
\end{align*} 
Since $\Cok(\tau)\cong \bigoplus_{i=0}^{r-1}\cO/(\pi^{n_i})$ and 
 $(\zz)^h \equiv \xi\pi^{eh}\pmod z$ for some $\xi \in \cO\mal$, we see that $\ul\frM_{\SSC(\lam,\ul n)}$ is an effective torsion local shtuka over $\cO$ of height $\leq h$.
Let us consider the $\bF$-action on $\frM_{\SSC(\lam,\ul n)}$ given by $b\cdot {\bm e}_i:=b^{\wh{q}^i}{\bm e}_i$ for $b \in \bF$.
Then it is compatible with $\tau$ and hence $\ul\frM_{\SSC(\lam,\ul n)}$ is a torsion local $\bF$-shtuka whose $\cO\zb_\bF$-rank is one.
Thus the $G_L$-action on $T_\frp\ul\frM_{\SSC(\lam,\ul n)}$ is given by  an $\bF\mal$-valued character $\psi\col G_L\ra \bF\mal$.

\begin{lemma}\label{lem.tametor}
Let $\lam \in k\mal$ and $\ul n=(n_0,\ldots,n_{r-1})$ be  as above.
Then the character  $\psi\col G_L \ra \bF\mal$ determined by $T_\frp\ul\frM_{\SSC(\lam,\ul n)}$ satisfies $\psi|_{I_L}=\theta_r^n$, where 
$n=n_0{\wh q}^{r-1}+n_1{{\wh q}^{r-2}}+\cdots+n_{r-1}$. 
Therefore, 
if $eh \leq \wh q-1$, then we have $\TI(T_\frp\ul\frM_{\SSC(\lam,\ul n)})=\{n_0,\ldots,n_{r-1}\}$ as multi-sets.
\end{lemma}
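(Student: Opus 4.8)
The plan is to compute the Galois action on the dual Tate module $\wc T_\frp\ul\frM_{\SSC(\lam,\ul n)}$ explicitly, following the standard Kummer-theoretic description of tame characters, and then dualize. First I would pick the cyclic $\cO\zb_\bF$-basis ${\bm e}_0,\ldots,{\bm e}_{r-1}$ of $\frM_{\SSC(\lam,\ul n)}$ and work inside $\frM_{\SSC(\lam,\ul n)}\ot_{\cO\zb}L^\sep\zb$. An element fixed by $\tau$ is a vector $\sum_i x_i {\bm e}_i$ with $x_i\in L^\sep\zb$; the relation $\tau(\hsig\ast(\sum x_i{\bm e}_i))=\sum x_i{\bm e}_i$ unwinds, using $\tau(\hsig\ast{\bm e}_i)=\pi^{n_i}{\bm e}_{i+1}$ for $i\neq r-1$ and $\tau(\hsig\ast{\bm e}_{r-1})=\lam\pi^{n_{r-1}}{\bm e}_0$ together with $\hsig(x_i)=x_i^{\wh q}$ (since $\hsig$ acts on $L^\sep\zb$ by $\lam\mapsto\lam^{\wh q}$, $z\mapsto z$), into the cyclic system $x_{i+1}\pi^{n_i}=x_i$ ($i\neq r-1$) after applying $\hsig$, i.e. $x_{i}=\pi^{n_{i-1}}x_{i-1}^{?}$ — I would be careful here to track on which side $\hsig\ast$ sits, since $\tau_{\frM}$ goes $\hsig\ast\frM[\zzf]\to\frM[\zzf]$, so the defining equation is $\tau_\frM(\hsig\ast m)=m$ with $\hsig\ast(x{\bm e})=x^{\wh q}(\hsig\ast{\bm e})$. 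Iterating $r$ times around the cycle yields a single equation of the shape $x_0^{{\wh q}^r} = \lam^{-1}\pi^{-(n_0{\wh q}^{r-1}+\cdots+n_{r-1})}\,x_0$ (up to the precise placement of exponents), equivalently $x_0^{{\wh q}^r-1}$ is a unit times $\pi^{-n}$ with $n=n_0{\wh q}^{r-1}+\cdots+n_{r-1}$.

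Next I would solve this: modulo $z$ the coefficient $\lam$ lives in $k\mal\subset\cO$, and since $\pi^{-n/({\wh q}^r-1)}$ is (up to a ${\wh q}^r-1$-st root of unity and a unit) a generator of the extension $L_r^{(\pi)}$, the solution $x_0$ generates over $L^\ur$ exactly the Kummer extension cut out by $X^{{\wh q}^r-1}-\pi^{-n}$, which is contained in $L_r^{(\pi)}$. For $s\in I_L$ one then computes $s(x_0)/x_0 = (s(\pi_r)/\pi_r)^{-n}=\theta_r(s)^{-n}$ by the very definition of the fundamental character, where I must double-check the sign convention built into $\theta_r(s)=s(\pi_r)/\pi_r$ and the fact that $\wc T_\frp$ (not $T_\frp$) carries this character directly. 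Since $T_\frp\ul\frM_{\SSC(\lam,\ul n)}=\Hom_{A_\frp}(\wc T_\frp\frM,Q_\frp/A_\frp)$ is the $A_\frp$-linear dual of $\wc T_\frp$, its character is the inverse; combining the two inversions (dual, and the sign from the cyclic iteration) gives $\psi|_{I_L}=\theta_r^{\,n}$ with $n=n_0{\wh q}^{r-1}+\cdots+n_{r-1}$. I would present this sign bookkeeping carefully rather than hand-wave it, since it is the only genuinely delicate point.

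Finally, for the last sentence: if $eh\leq\wh q-1$ then each $n_i$ satisfies $0\leq n_i\leq eh\leq\wh q-1$, so the expansion $n=n_0{\wh q}^{r-1}+n_1{\wh q}^{r-2}+\cdots+n_{r-1}$ is precisely the base-$\wh q$ digit expansion of $n$ (the digits are in the legal range $[0,\wh q-1]$ and there are exactly $r$ of them, with $0\le n\le {\wh q}^r-1$). Hence by the definition of $\TI(\psi)$ — the digits of the exponent in the base-$\wh q$ expansion — and of $\TI$ of a $\frp$-adic representation via its residual semi-simplification, we get $\TI(T_\frp\ul\frM_{\SSC(\lam,\ul n)})=\{n_0,\ldots,n_{r-1}\}$ as multi-sets, noting that $T_\frp\ul\frM_{\SSC(\lam,\ul n)}$ is already one-dimensional over $\bF$ hence irreducible over $I_L$ after the allowed unramified base change, so no further semi-simplification is needed.

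The main obstacle I anticipate is not conceptual but the exponent/sign bookkeeping in the cyclic iteration: getting right whether the ${\wh q}^i$-twist of the $\bF$-action, the placement of $\hsig\ast$ on the source of $\tau_\frM$, and the passage from $\wc T_\frp$ to its dual $T_\frp$ combine to $\theta_r^{+n}$ versus $\theta_r^{-n}$, and checking this is consistent with the normalization of $\theta_r$ fixed earlier as $s\mapsto s(\pi_r)/\pi_r$. Everything else — the Kummer theory identifying the fixed vectors, and the base-$\wh q$ digit observation — is routine given the setup already in place.
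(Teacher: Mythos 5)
Your proposal follows essentially the same route as the paper: unwind $\tau(\hsig\ast m)=m$ for $m=\sum x_i{\bm e}_i$ to the cyclic system $x_i^{\wh q}=\pi^{-n_i}x_{i+1}$ (and $x_{r-1}^{\wh q}=(\lam\pi^{n_{r-1}})^{-1}x_0$), iterate to $x_0^{\wh q^r}=(\lam\pi^n)^{-1}x_0$, identify $\wc T_\frp\ul\frM_{\SSC(\lam,\ul n)}$ inside $L^\sep$ via $m\mapsto x_0$ so that $I_L$ acts by $\theta_r^{-n}$, then dualize to $\theta_r^n$ on $T_\frp$ and read off the base-$\wh q$ digits. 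One small imprecision worth noting: your parenthetical claim that $T_\frp\ul\frM_{\SSC(\lam,\ul n)}$ is automatically irreducible over $\bF_\frp[I_L]$ is not quite true (if $n$ is a multiple of $(\wh q^r-1)/(\wh q^{r'}-1)$ for some proper divisor $r'\mid r$, the representation splits into $r/r'$ isomorphic simples of dimension $r'$), but this does not affect the conclusion, since the base-$\wh q$ digit multi-set of $n$ is then exactly $r/r'$ copies of that of the corresponding $n'$, so $\TI$ comes out the same either way.
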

\begin{proof}
It suffices to check that the $I_L$-action on the dual $\wc T_\frp\ul\frM_{\SSC(\lam,\ul n)}$ of  $T_\frp\ul\frM_{\SSC(\lam,\ul n)}$ is given by $\theta_r^{-n}$ for $n=\sum_{i=0}^{r-1}n_i{\wh q}^{r-1-i}$.
Since $\frM_{\SSC(\lam,\ul n)}\ot_{\cO\zb}L^\sep\zb = \bigoplus_{i=0}^{r-1}L^\sep\cdot {\bm e}_i$, any element $m \in \wc T_\frp\ul\frM_{\SSC(\lam,\ul n)}$ is of the form $m=\sum_{i=0}^{r-1}x_i{\bm e}_i$ such that $x_i \in L^\sep$ satisfy $x_i^{{\wh q}}=(\pi^{n_i})\inv x_{i+1}$ for $i\neq r-1$ and 
$x_{r-1}^{\wh q}=(\lam\pi^{n_{r-1}})\inv x_0$.
Since giving such $m$ is equivalent to giving  $x_0 \in L^\sep$ with 
$x_0^{{\wh q}^r}=(\lam \pi^n)\inv x_0$, we can regard $\wc T_\frp\ul\frM_{\SSC(\lam,\ul n)}$ as an $\bF$-submodule  of $L^\sep$ via the map 
$m=\sum_{i=0}^{r-1}x_i{\bm e}_i \mapsto x_0$.
Then the $G_L$-action on $\wc T_\frp\ul\frM_{\SSC(\lam,\ul n)}$ is given by $s(x_0)=\varepsilon(s)\theta_r(s)^{-n}x_0$ for $s\in G_L$, where $\varepsilon\col G_L\ra \bF\mal$ is the unramified character determined by $s(\lam')=\varepsilon(s)\lam'$ for a $({\wh q}^r-1)$-st root $\lam'$ of $\lam\inv$. 
This provides the conclusion.
\end{proof}

\begin{example}\label{ex.rankone}
Let $\ul{\wh M}$ be an effective local shtuka over $\cO$ with $\rk \ul{\wh M}=1$.
Then it is of the form $\ul{\wh M}=(\cO\zb\cdot {\bm e}, \tau_{\wh M}(\hsig\ast{\bm e})=\xi(\zz)^d{\bm e})$, where $d=\dim \ul M$ and  $\xi \in \cO\zb\mal$.
Since $\cO\zb\mal=\cO\mal + z\cO\zb$, the effective torsion local shtuka $\ul\frM=\ul{\wh M}/z\ul{\wh M}$ is given by $\frM=\cO\cdot {\bm e}$
and $\tau_{\frM}(\hsig\ast{\bm e})=\lambda\pi^{ed}$ for some $\lam \in \cO\mal$.
Hence, if $ed\leq \wh q-1$, then  
$\ul\frM=\ul \frM_{\SSC (\lam,\ul n)}$ for $\ul n=(ed)$ and so the $I_L$-action on $T_\frp\ul{\frM}$ is given by 
$\theta_1^{ed}$.
\end{example}

\begin{proposition}\label{prop.fshtuka}
Let $\bF \subset k$ be as above.
Then 
any effective torsion local $\bF$-shtuka $\ul\frM_\bF$ over $\cO$ of height $\leq h$ with $\cO\zb_\bF$-rank one is isomorphic to  $\ul\frM_{\SSC(\lam,\ul n)}$ for a unique $\lam\in k\mal$ up to $(k\mal)^{{\wh q}^r-1}$-multiple, and a unique $\ul n=(n_0,n_1,\ldots,n_{r-1})$  with $0 \leq n_i \leq eh$.
\end{proposition}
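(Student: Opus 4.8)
The plan is to unwind the $\cO\zb_\bF$-module structure completely and then bring $\tau_{\frM_\bF}$ into standard shape by a unit change of basis; this is the rank-one case of the local-shtuka analog of Raynaud's classification of finite flat group schemes of $p$-power order. First I would identify the coefficient ring. Since $\bF$ is an $A_\frp$-algebra through $A_\frp=\bF_\frp\zb\thra\bF_\frp\subset\bF$ with $z\mapsto 0$, we get $\cO\zb_\bF=\cO\zb\ot_{A_\frp}\bF\cong(\cO\zb/(z))\ot_{\bF_\frp}\bF=\cO\ot_{\bF_\frp}\bF$. As $\bF/\bF_\frp$ is separable and $\bF\subset k\subset\cO$, the $\bF_\frp$-embeddings $\bF\hra\cO$ are the $r$ maps $b\mapsto b^{\wh q^i}$ $(0\le i\le r-1$, viewing $\bF$ inside $\cO)$, and the Chinese remainder theorem gives $\cO\zb_\bF\cong\prod_{i=0}^{r-1}\cO$ with orthogonal idempotents $\epsilon_0,\dots,\epsilon_{r-1}$ on which $b\in\bF$ acts by $b\cdot\epsilon_i=b^{\wh q^i}\epsilon_i$; a direct check gives $\hsig(\epsilon_i)=\epsilon_{i+1}$ (indices mod $r$), so $\hsig$ is the $\wh q$-power Frobenius on each factor followed by the cyclic shift $i\mapsto i+1$. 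Since $z=0$ here, the element $\zz$ equals $-\zeta$ in $\cO\zb_\bF$, so $\frM_\bF[\zzf]=\frM_\bF\ot_\cO L$ and $(\zz)^h\frM_\bF=\zeta^h\frM_\bF=\pi^{eh}\frM_\bF$ because $\ord_L(\zeta)=e$.

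Given $\ul\frM_\bF$ of $\cO\zb_\bF$-rank one, I would fix a generator, set ${\bm e}_i:=\epsilon_i\cdot(\text{generator})$, so $\frM_\bF=\bigoplus_{i=0}^{r-1}\cO\cdot{\bm e}_i$ with $b\,{\bm e}_i=b^{\wh q^i}{\bm e}_i$ and $\hsig\ast\frM_\bF=\bigoplus_i\cO\cdot(\hsig\ast{\bm e}_i)$, where $\hsig\ast{\bm e}_i$ lies in the $(i+1)$-st factor. Because $\tau_{\frM_\bF}$ is $\cO\zb_\bF[\zzf]$-linear and $\cO\zb_\bF[\zzf]\cong\prod_iL$, it respects the factor decomposition, so necessarily $\tau_{\frM_\bF}(\hsig\ast{\bm e}_i)=c_i\,{\bm e}_{i+1}$ for unique $c_i\in L\mal$. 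The condition of being effective of height $\le h$, namely $(\zz)^h\frM_\bF\subseteq\tau_{\frM_\bF}(\hsig\ast\frM_\bF)\subseteq\frM_\bF$, becomes exactly $0\le n_i\le eh$ for all $i$, where $n_i:=\ord_L(c_i)$; in particular the multi-set $\{n_i\}$ depends only on the isomorphism class.

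Finally I would normalize. Write $c_i=w_i\pi^{n_i}$ with $w_i\in\cO\mal$. A change of basis ${\bm e}_j\mapsto v_j{\bm e}_j$ with $v_j\in\cO\mal$ is an automorphism of the $\cO\zb_\bF$-module $\frM_\bF$; transporting $\tau_{\frM_\bF}$ through it and through $\hsig\ast$ (using $\hsig(v_j)=v_j^{\wh q}$) replaces each $c_j$ by $c_j'=v_j^{-\wh q}v_{j+1}c_j$ (indices mod $r$). Imposing $c_j'=\pi^{n_j}$ for $0\le j\le r-2$ forces $v_{j+1}=v_j^{\wh q}w_j^{-1}$, which determines $v_1,\dots,v_{r-1}$ from $v_0$, and then $c_{r-1}'=\lam\pi^{n_{r-1}}$ with $\lam=v_0^{\,1-\wh q^r}\prod_{j=0}^{r-1}w_j^{\,\wh q^{r-1-j}}\in\cO\mal$; thus $\ul\frM_\bF\cong\ul\frM_{\SSC(\lam,\ul n)}$. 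As $v_0$ runs over $\cO\mal$ the factor $v_0^{1-\wh q^r}$ runs over $(\cO\mal)^{\wh q^r-1}$, so $\lam$ is well-defined only modulo $(\cO\mal)^{\wh q^r-1}$; and since $\cO=k\llbracket\pi\rrbracket$ and $\gcd(\wh q^r-1,p)=1$, one has $1+\pi\cO\subseteq(\cO\mal)^{\wh q^r-1}$ and $\cO\mal=k\mal\times(1+\pi\cO)$, whence $\cO\mal/(\cO\mal)^{\wh q^r-1}\cong k\mal/(k\mal)^{\wh q^r-1}$, so $\lam$ can, and then uniquely must, be taken in $k\mal$ up to $(k\mal)^{\wh q^r-1}$. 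Running the same computation for the standard shtukas, where $w_j=1$ for $j<r-1$ and $w_{r-1}=\lam$ (so $\prod_j w_j^{\wh q^{r-1-j}}=\lam$), shows conversely that $\ul\frM_{\SSC(\lam,\ul n)}\cong\ul\frM_{\SSC(\lam',\ul n')}$ forces $\ul n=\ul n'$ and $\lam'/\lam\in k\mal\cap(\cO\mal)^{\wh q^r-1}=(k\mal)^{\wh q^r-1}$, which is the uniqueness assertion.

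The routine but delicate point will be the index bookkeeping in this last step — keeping straight how $\hsig\ast$ and the cyclic shift interact with the unit change of basis, so as to pin down the exponent pattern $v_0^{1-\wh q^r}\prod_j w_j^{\wh q^{r-1-j}}$ that exhibits the $(\wh q^r-1)$-st-power ambiguity of $\lam$; the descent from $\cO\mal$ to $k\mal$ afterwards is a short Hensel/Teichm\"uller argument, and nothing genuinely hard is needed once the decomposition $\cO\zb_\bF\cong\prod_i\cO$ of the first step is in place.
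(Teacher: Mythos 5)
Your proof is correct, and it supplies precisely the argument the paper delegates to \cite[Corollary 9.1.4]{Ki09} without spelling out: the CRT decomposition $\cO\zb_\bF \cong \prod_{i=0}^{r-1}\cO$ with $\hsig$ acting as $\wh q$-Frobenius followed by the cyclic shift, the resulting diagonal form $\tau_{\frM_\bF}(\hsig\ast\bm e_i)=c_i\bm e_{i+1}$, the reading-off of $n_i=\ord_L(c_i)\in[0,eh]$ from effectivity and height, and the ``Hilbert-90''-style unit normalization. Two small remarks: your change-of-basis formula $c_j'=v_j^{-\wh q}v_{j+1}c_j$ has the opposite sign convention from the naive one (substituting $\bm e_j'=v_j\bm e_j$ directly gives $c_j'=v_j^{\wh q}v_{j+1}^{-1}c_j$), but since the $v_j$ range over all of $\cO\mal$ this is harmless and your downstream computation of $\lam=v_0^{1-\wh q^r}\prod_j w_j^{\wh q^{r-1-j}}$ is internally consistent; and the descent $\cO\mal/(\cO\mal)^{\wh q^r-1}\cong k\mal/(k\mal)^{\wh q^r-1}$ does, as you flag, need both $\cO=k\llbracket\pi\rrbracket$ (perfect residue field plus Cohen) and $\gcd(\wh q^r-1,p)=1$ so that $1+\pi\cO\subset(\cO\mal)^{\wh q^r-1}$ — both of which hold here. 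The uniqueness direction is also complete, since an isomorphism of rank-one torsion local $\bF$-shtukas must commute with the $\bF$-action and hence be diagonal with respect to the idempotent decomposition, which is what your converse computation implicitly uses.
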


\begin{proof}
This is \cite[Corollary 9.1.4]{Ki09}.
\end{proof}

Based on the above preparation,  we can get an important formula for tame inertia weights of  $\frp$-adic representations arising from effective local shtukas under the condition $eh < \wh q-1$.

\begin{proposition}\label{prop.localtame}
Let $\ul {\wh M}$ be an effective local shtuka over $\cO$ of height $\leq h$.
Suppose that the ramification index $e=\ord_{L}(\zeta)$ of $L/\gamma(Q_\frp)$ satisfies $eh < \wh q-1$.
Then $\TI(V_\frp\ul{\wh M}) \subset [0,eh]$ and the equation 
\[
e\cdot \dim\ul{\wh M}=\Sigma  \TI(V_\frp\ul{\wh M})
\] 
holds.
\end{proposition}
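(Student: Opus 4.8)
The plan is to reduce the statement to a computation with torsion local shtukas of rank one over suitable finite-field coefficients, where Propositions~\ref{prop.torcat} and~\ref{prop.fshtuka} together with Lemma~\ref{lem.tametor} do the work. First I would pass to the effective torsion local shtuka $\ul\frM=(\frM,\tau_{\frM}):=\ul{\wh M}/z\ul{\wh M}$ over $\cO$, which is effective of height $\leq h$. By Lemma~\ref{lem.modz} one has $T_\frp\ul\frM\cong T_\frp\ul{\wh M}\ot_{A_\frp}(A_\frp/\frp A_\frp)$, so, taking the natural $G_L$-stable lattice $T_\frp\ul{\wh M}\subset V_\frp\ul{\wh M}$, the multi-set $\TI(V_\frp\ul{\wh M})$ is computed from the $G_L$-representation $\bar V:=T_\frp\ul\frM$. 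In parallel I would establish the length identity $\length_{\cO}\Cok(\tau_{\frM})=e\cdot\dim\ul{\wh M}$. Indeed, by Lemma~\ref{lemdimension} the module $\Cok(\tau_{\wh M})$ is free of rank $d:=\dim\ul{\wh M}$ over $\cO$ and annihilated by a power of $z-\zeta$, so multiplication by $z$ on it is an $\cO$-linear endomorphism of determinant $\zeta^{d}$ (write $z=\zeta+(z-\zeta)$, with $z-\zeta$ nilpotent), in particular injective; tensoring $0\to\hsig\ast\wh M\to\wh M\to\Cok(\tau_{\wh M})\to0$ with $\cO\zb/z=\cO$ then identifies $\Cok(\tau_{\frM})$ with $\Cok(\tau_{\wh M})/z\Cok(\tau_{\wh M})$, which has $\cO$-length $\ord_L(\zeta^{d})=ed$.

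Next I would decompose $\bar V$. As $\TI$ only sees a semi-simplification, I may replace $L$ by a finite unramified extension — changing neither $\TI(V_\frp\ul{\wh M})$ nor $\dim\ul{\wh M}$ nor $e$ nor the height bound — chosen (this is possible because $\bar V$ has only finitely many constituents) so that every $G_L$-irreducible constituent of $\bar V$ remains irreducible over $I_L$ and so that $G_L$ acts trivially by conjugation on each field $\End_{\bF_\frp[I_L]}$ of such a constituent. Iterating Proposition~\ref{prop.torcat} then yields a filtration of $\ul\frM$ by effective torsion local shtukas of height $\leq h$ whose successive quotients $\ul\frM_1,\dots,\ul\frM_s$ satisfy: each $T_\frp\ul\frM_j$ is irreducible over $I_L$, and $\bar V^{\ss}\cong\bigoplus_{j}T_\frp\ul\frM_j$ as $G_L$-representations. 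By Lemma~\ref{lem.fund} the field $\bF_j:=\End_{\bF_\frp[I_L]}(T_\frp\ul\frM_j)$ satisfies $[\bF_j:\bF_\frp]=\rk\ul\frM_j=:r_j$; transporting this $\bF_j$-action back through the equivalence of~\cite{Ki09} and~\cite{HK20} between torsion local shtukas and their Tate modules — here the triviality of conjugation arranged above is exactly what makes the $\bF_j$-action land on $\ul\frM_j$ — one obtains on $\ul\frM_j$ a $\tau_{\frM_j}$-compatible $\bF_j$-action, so that $\ul\frM_j$ is an effective torsion local $\bF_j$-shtuka of height $\leq h$ with $\cO\zb_{\bF_j}$-rank one. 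I expect this last identification — that each $\ul\frM_j$ genuinely has rank one over $\cO\zb_{\bF_j}$, not merely over $\cO\zb$ — to be the main obstacle, and it is precisely where the unramified base change is needed.

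The rest is formal. Proposition~\ref{prop.fshtuka} gives $\ul\frM_j\cong\ul\frM_{\SSC(\lam_j,\ul n_j)}$ with $\ul n_j=(n_{j,0},\dots,n_{j,r_j-1})$ and $0\leq n_{j,i}\leq eh$, so that $\length_{\cO}\Cok(\tau_{\frM_j})=\sum_{i}n_{j,i}$; and since $eh<\wh q-1$, Lemma~\ref{lem.tametor} gives $\TI(T_\frp\ul\frM_j)=\{n_{j,0},\dots,n_{j,r_j-1}\}\subset[0,eh]$. A snake-lemma argument — using that $\hsig$ is flat, as $k$ is perfect — shows that $\length_{\cO}\Cok(\tau_{\frN})$ is additive in short exact sequences $0\to\ul\frN''\to\ul\frN\to\ul\frN'\to0$ of effective torsion local shtukas, whence $\sum_{j}\length_{\cO}\Cok(\tau_{\frM_j})=\length_{\cO}\Cok(\tau_{\frM})=ed$. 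Combining everything, $\TI(V_\frp\ul{\wh M})=\TI(\bar V)=\bigcup_{j}\TI(T_\frp\ul\frM_j)\subset[0,eh]$, and
\[
\Sigma\TI(V_\frp\ul{\wh M})=\sum_{j}\sum_{i}n_{j,i}=\sum_{j}\length_{\cO}\Cok(\tau_{\frM_j})=\length_{\cO}\Cok(\tau_{\frM})=e\cdot\dim\ul{\wh M},
\]
as asserted.
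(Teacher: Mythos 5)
Your overall strategy matches the paper's: reduce modulo $z$ to the effective torsion local shtuka $\ul\frM=\ul{\wh M}/z\ul{\wh M}$, filter it via Proposition~\ref{prop.torcat} into pieces whose Tate modules are $I_L$-irreducible, upgrade each piece to a torsion local $\bF_j$-shtuka of $\cO\zb_{\bF_j}$-rank one, and then invoke Proposition~\ref{prop.fshtuka} and Lemma~\ref{lem.tametor}. Your bookkeeping via $\length_\cO\Cok(\tau_\frM)$ (and its additivity in exact sequences, using flatness of $\hsig$) is a legitimate variant of the paper's bookkeeping via $\ord_L(\det\tau_\frM)$, and your derivation of $\length_\cO\Cok(\tau_\frM)=e\cdot\dim\ul{\wh M}$ by computing the determinant of $z=\zeta+(z-\zeta)$ on $\Cok(\tau_{\wh M})$ is correct. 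Your choice to pass to a \emph{finite} unramified extension, rather than to $\wh L^\ur$ as the paper does, is also fine, provided the extension is chosen as you indicate; the paper's route simply renders all three of your conditions automatic.

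There is, however, a genuine gap at the step where you write of ``transporting this $\bF_j$-action back through the equivalence of \cite{Ki09} and \cite{HK20} between torsion local shtukas and their Tate modules.'' The functor $T_\frp$ is not an equivalence, and the passage from an $\bF_j$-action on $T_\frp\ul\frM_j$ (equivalently, on $\wc T_\frp\ul\frM_j$) to a $\tau$-compatible $\bF_j$-action on the module $\frM_j$ itself is precisely the delicate point. What one gets for free, via taking $G_L$-invariants of the isomorphism (\ref{eq.toret}), is a $\tau$-compatible $\bF_j$-action on $\frM_j\ot_{\cO\zb}L\zb$; the nontrivial assertion is that this action preserves the $\cO\zb$-lattice $\frM_j$. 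This is where the hypothesis $eh<\wh q-1$ enters a second time, through \cite[Proposition 9.3.3 (2)]{Ki09}, and it is not resolved by the unramified base change. Your unramified base change serves a different (and needed) purpose — ensuring that each $I_L$-constituent is $G_L$-stable and that $\bF_j$ commutes with all of $G_L$ so that the $\bF_j$-action descends to $\frM_j\ot_{\cO\zb}L\zb$ at all. You also misidentify ``the main obstacle'': once the $\bF_j$-action is known to preserve $\frM_j$, freeness over $\cO\zb_{\bF_j}$ is automatic by \cite[Lemma 3.7.24]{HK20}, and the rank one statement follows from $[\bF_j:\bF_\frp]=\rk_\cO\frM_j$ by Lemma~\ref{lem.fund}. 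Without citing Kim's integrality result (or reproving it), the claim that each $\ul\frM_j$ is a torsion local $\bF_j$-shtuka remains unjustified, and the appeal to Proposition~\ref{prop.fshtuka} does not yet apply.
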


\begin{proof}
We may assume that $k$ is algebraically closed and $G_L=I_L$ for the following reason.
Let $\wh L^\ur$ be the completion of the maximal unramified extension $L^\ur$ of $L$ and denote by $\wh \cO^\ur$ the  valuation ring of $\wh L^\ur$. 
Then $\wh L^\ur$ has residue field $k^\sep$ and $G_{\wh L^\ur} \cong I_L$.
In addition, 
it follows that $\ul{\wh M}\ot\wh\cO^\ur\zb:=(\wh M\ot_{\cO\zb}\wh\cO^\ur\zb, \tau_{\wh M}\ot\id)$ becomes an effective local shtuka over $\wh \cO^\ur$ which has the same rank and dimension of $\ul{\wh M}$, and that 
$T_\frp\ul{\wh M}|_{I_L}$ is canonically isomorphic to $T_\frp({\ul {\wh M}}\ot\wh\cO^\ur\zb)$; see \cite[Proposition 8.1.13]{Ki09} for more details.

Now we  set $r=\rk\ul{\wh M}$ and $d=\dim\ul{\wh M}$, so that $\det\tau_{\wh M} \in (\zz)^d\cdot \cO\zb\mal$ by Lemma \ref{lemdimension}.
Consider the effective torsion local shtuka $\ul\frM:=(\frM,\tau_{\frM}):=\ul{\wh M}/z\ul{\wh M}$. 
Then  $\ul\frM$ is  of height $\leq h$. 
Since $T_\frp\ul{\wh M}\ot_{A_\frp}\bF_\frp \cong T_\frp\ul\frM$ as $\bF_\frp$-representations of $G_L$, we have $\TI(V_\frp\ul{\wh M})=\TI(T_\frp\ul{\frM})$.
Since $\det\tau_{\frM}\equiv\det\tau_{\wh M}\pmod z$ and $\cO\zb\mal=\cO\mal + z\cO\zb$, we have   
$\ord_L(\det\tau_{\frM})=ed$.
Thus it suffices to check the equation $\ord_L(\det\tau_{\frM})=\Sigma\TI(T_\frp\ul\frM)$.

To do this,
we first consider the case where $T_\frp\ul\frM$ is irreducible.
Then Lemma \ref{lem.fund} implies that $T_\frp\ul\frM$ is one-dimensional representation of $I_L$ over the  finite field $\bF:=\End_{\bF_\frp[I_L^\tame]}(T_\frp\ul\frM)$ of degree $[\bF:\bF_\frp]=r$.
Taking $G_L$-invariant parts of both sides of (\ref{eq.toret}), we get an isomorphism 
\[
(\wc T_\frp\ul\frM \ot_{A_\frp}L^\sep\zb)^{G_L} \rai \frM\ot_{\cO\zb}L\zb.
\]  
Via this isomorphism, the $\bF$-action of $\wc T_\frp\ul\frM$ induces an $\bF$-action on $\frM\ot_{\cO\zb}L\zb$, which commutes with $\tau_{\frM}\ot\id$.
By the assumption $eh<\wh q-1$, it follows by \cite[Proposition 9.3.3.\ (2)]{Ki09}
that this $\bF$-action preserves $\frM$ and so  $\ul\frM$ becomes an  effective torsion local $\bF$-shtuka.
Since $\ul\frM$ is of $\cO\zb_\bF$-rank one, it follows by Proposition \ref{prop.fshtuka} that $\ul\frM$ is isomorphic to $\ul\frM_{\SSC(\lam,\ul n)}$ for some $\lam \in k\mal$ and $\ul{n}=(n_0,\ldots,n_{r-1})$ with $0 \leq n_i \leq eh$.
Hence 
Lemma \ref{lem.tametor} shows that $\TI(T_\frp\ul\frM)=\{n_0,\ldots,n_{r-1}\}$.
Since $\Cok(\tau_{\frM})\cong \bigoplus_{i=0}^{r-1}\cO/(\pi^{n_i})$,
 we obtain 
$\ord_L(\det\tau_{\frM})=n_0+\cdots+n_{r-1}=\Sigma\TI(T_\frp\ul{\frM})$.

Next, we suppose that $T_\frp\ul\frM$ is not irreducible.
Then there is  a descending filtration by $G_L$-stable $\bF_\frp$-subspaces of $T_\frp\ul\frM$
\[
0=T_0 \subset T_1 \subset \cdots \subset T_n=T_\frp\ul\frM
\]
 by the Jordan-H\"{o}lder theorem, and consider the semi-simplification $(T_\frp\ul\frM)^\ss=\bigoplus_{j=1}^nT_j/T_{j-1}$. 
Then
$\Sigma\TI(T_\frp\ul\frM)$ is the sum of all $\Sigma\TI(T_j/T_{j-1})$.
Proposition \ref{prop.torcat} shows that all sequences $0\ra T_{j-1}\ra T_j \ra T_j/T_{j-1}\ra 0$ come from exact sequences of  effective torsion local shtukas of height $\leq h$
\[
0\lra \ul\frM_{j} \lra \ul\frN_j \lra \ul\frN_{j-1}\lra 0
\]
with $T_\frp\ul\frM_j=T_{j}/T_{j-1}$ and $T_\frp\ul\frN_j=T_j$.
Since $\det\tau_{\frN_j}=(\det\tau_{\frM_j}) \cdot (\det\tau_{\frN_{j-1}})$ and $\det\tau_{\frN_n}=\det\tau_{\frM}$, we have 
$\det\tau_\frM=\prod_{j=1}^n \det\tau_{\frM_j}$.
Therefore $\ord_L(\det\tau_{\frM})=\sum_{j=1}^n \ord_L(\det\tau_{\frM_j})$.
Since each $T_\frp\ul\frM_j$ is irreducible, we get $\ord_L(\det_{\frM_j})=\Sigma\TI(T_j/T_{j-1})$ for any $1\leq j \leq n$.
Hence $\ord_L(\det\tau_{\frM})=\Sigma\TI(T_\frp\ul\frM)$ holds.
\end{proof}

\begin{remark}
The relation $\TI(V_\frp\ul{\wh M}) \subset [0,eh]$ is of course true even if $eh=\wh q-1$, but 
 the equation $e\cdot \dim\ul{\wh M}=\Sigma\TI(V_\frp\ul{\wh M})$ does not hold in general.
For example, if $e=\wh q-1$ and $\rk\ul{\wh M}=\dim\ul{\wh M}=1$ (hence, $\ul{\wh M}$ is of height $\leq 1$), then $I_L$ acts on $T_\frp\ul{\wh M}\ot_{A_\frp}\bF_\frp$ via $\theta_1^e=\theta_1^{\wh q-1}=\theta_1^0$.  
Thus $\Sigma\TI(V_\frp\ul{\wh M})=0$.
\end{remark}

%
%

\section{Rigid analytic aspects  of  effective $A$-motives}

In this section, we introduce the notion of {\it strongly semi-stable reduction} for effective $A$-motives and prove a formula between tame inertia weights and dimensions of such effective $A$-motives.
We fix a maximal ideal $\frp$ of $A$.
Let $L$ be a completion of $K$ at a finite place lying above $\frp$.
Let us denote its valuation ring by $\cO$ and its residue field by $k$. 
Thus the $A$-field structure $\gamma\col A \ra K \subset L$ factors through $\cO$ and we have $\frp=\Ker(A\overset{\gamma}{\ra} \cO\thra k)$.
We take a uniformizer $\pi \in L$.

\subsection{Analytic $A(1)$-motives}
To introduce ``analytification'' of effective $A$-motives, let us first prepare some rigid analytic rings (cf.\ \cite{HH16} and \cite{Hus13}).
Recall that $A_L$ and $A_k$ are Dedekind domains.
By the isomorphism $A_\cO/\pi A_\cO \cong A_k$, the uniformizer $\pi$  is a prime element of $A_\cO$. 
\begin{definition}
Denote by $\wt A_\cO:=A_{\cO,\pi}$ the $\pi$-adic completion of $A_\cO$, and set 
$\wt A_L:=A_{\cO,\pi}[\frac{1}{\pi}]$.
\end{definition}
It follows that the topological $\cO$-algebra $\wt A_\cO$ is {\it admissible} in the sense of Raynaud, that is, it is of topologically finite presentation and has no $\pi$-torsion; see \cite[\S 7.2, Definition 3]{Bos14} for instance.
(It is also known from \cite[Proposition 1.7]{Hus13} that $\wt A_\cO$ is a regular integral domain.)
Hence $\wt A_L$ is a reduced affinoid $L$-algebra.
Let $(\frp,\pi)$ denote the ideal of $A_\cO$ generated by $\frp$ and $\pi$.
By \cite[Lemma 2.3]{HH16} (or \cite[Lemma 1.16]{Hus13}),  the $(\frp, \pi)$-adic completion of $A_\cO$ is canonically isomorphic to $A_{\frp,\cO} (=A_\frp\wh\ot_{\bF_q}\cO)$ as $A_\cO$-algebras.
Since the canonical map $A_\cO \ra A_{\frp,\cO}$ uniquely factors through $\wt A_\cO$, 
the induced map $\wt A_\cO \ra A_{\frp,\cO}$ identifies $A_{\frp,\cO}$ with the $(\frp,\pi)$-adic completion of $\wt A_\cO$, and hence it is flat.
Thus we obtain the  commutative diagram
\[
\xymatrix{
A_\cO \ar@{=}[d]\ar[r]& \wt A_\cO \ar[d]\ar[r]& \wt A_L\ar[d] \\
A_\cO \ar[r]& A_{\frp,\cO}\ar[r] & A_{\frp,L},
}
\]  
where all arrows are injective and flat.

Let us next consider the lifting of the Frobenius map $\sig=\id_A\ot(\cdot)^q$ of $A_\cO$ to the above rings. 
We see that the map $\sig$ is $\pi$-adically and $\frp$-adically continuous, and therefore it canonically extends to $\sig\col \wt A_\cO\ra \wt A_\cO$ and $\sig\col A_{\frp,\cO} \ra A_{\frp,\cO}$. 
Since $\sig\col A_\cO \ra A_\cO$ is finite flat by \cite[Remark 3.2]{HH16}, we get 
the commutative diagram
\[
\xymatrix{
A_\cO \ar[d]_-\sig\ar[r]& \wt A_\cO \ar[d]_\sig\ar[r]& A_{\frp,\cO}\ar[d]_\sig \\
A_\cO \ar[r]& \wt A_\cO\ar[r] & A_{\frp,\cO}
}
\]
satisfying that both squares are co-Cartesian and all vertical arrows are finite flat; see \cite[Lemma 3.1]{HH16}.

Now let $\wt J_\cO$ be the ideal of $\wt A_\cO$ generated by $\{a\ot 1-1\ot \gamma(a) \mid a\in A\} \subset A_\cO$, so that $\wt J_\cO=J_\cO\wt A_\cO$.
Since $A_\cO$ is Noetherian and $\cO$ is $\pi$-adically complete,  the  map $A_\cO \thra \cO; a\ot\lam \mapsto \gamma(a)\lam$ induces a surjection $\wt A_\cO \thra \cO$  and hence $\wt J_\cO$ is the kernel of this map. 
Consequently, the ideal $\wt J=J\wt A_L$ of $\wt A_L$ is a maximal ideal with residue field $L$.

\begin{definition}
An {\it analytic $A(1)$-motive over $L$ of rank $r$ and dimension} $d$ is a pair $\wt {\ul M}=(\wt M,\tau_{\wt M})$  consisting of a locally free $\wt A_L$-module of rank $r$ and an injective $\wt A_L$-homomorphism $\tau_{\wt M}\col \sig\ast \wt M \ra \wt M$ such that $\Cok(\tau_{\wt M})$ is a $d$-dimensional $L$-vector space annihilated by a power of $\wt J$. 
Set $\rk \wt{\ul M}=r$ and $\dim\wt{\ul M}=d$.
For $h\geq 0$, we say that $\wt{\ul M}$ is {\it of height $\leq h$} if $\wt J^h\cdot \Cok(\tau_{\wt M})=0$.

A {\it morphism} $f\colon \wt{\ul M} \ra \wt{\ul N}$ between analytic $A(1)$-motives over $L$ is an $\wt A_L$-homomorphism $f\colon \wt M \ra \wt N$ such that 
$f\circ \tau_{\wt M} = \tau_{\wt N} \circ \sig\ast f$.
\end{definition}

\begin{remark}
The prefix ``$A(1)$-'', which is used in \cite{HH16} to reflect the notation in \cite{BH07}, indicates that analytic $A(1)$-motives are variants of effective $A$-motives over the rigid analytic ``unit disc'' in $\Spec A_L$. 
To elaborate, let us recall a geometric interpretation of the rings $\wt A_\cO$ and $\wt A_L$.
Since $\Spec A_L$ is of finite type over $L$, there is the associated rigid analytic  space 
$(\Spec A_L)^\an$ over $L$ via the rigid analytic GAGA functor (cf.\ \cite[\S\S 5.4]{Bos14}).
On the other hand, the formal completion of the $\cO$-scheme $\Spec \cO$ along its special fiber is 
the formal $\cO$-scheme $\Spf \wt A_\cO$.  
Since $\wt A_\cO$ is admissible as we have seen, its associated rigid analytic space $(\Spf \wt A_\cO)^\rig$, which is so-called the {\it Raynaud generic fiber}, coincides with the affinoid $L$-space $\Sp \wt A_L$.
Thus it can be regarded as the unit disc of the radius of convergence one in $(\Spec A_L)^\an$.

In contrast,  another (and  well-known) analytic variant of effective $A$-motives is the notion of {\it analytic $\tau$-sheaves}, which are locally free $\cO_{(\Spec A_L)^\an}$-modules $\cF$ of finite rank endowed with  suitable injective Frobenius structures $\tau_{\cF}\col \sig\ast \cF \ra \cF$; see \cite{Ga02} and \cite{Ga03} for details.
If the cokernel of $\tau_\cF$ is supported on the point corresponding to $\wt J$, the restriction of analytic $\tau$-sheaf $(\cF,\tau_\cF)$ to the unit disc $\Sp \wt A_L$ gives rise to an analytic $A(1)$-motive over $L$. 
Consequently, various properties of analytic $\tau$-sheaves are also valid for analytic $A(1)$-motives.
\end{remark}

\begin{definition}
Let $\wt{\ul M}$ be an analytic $A(1)$-motive over $L$.
An analytic $A(1)$-motive $\wt{\ul N}$ is called an {\it  $A(1)$-submotive} of $\ul{\wt M}$ if $\wt N$ is an $\wt A_L$-submodule of $\wt M$ and $\tau_{\wt N}=\tau_{\wt M}|_{\sig\ast \wt N}$.
We say that an $A(1)$-submotive $\ul {\wt N}$ of $\ul{\wt M}$ is {\it saturated} if the map $\tau_{\wt M}{\pmod {\wt N}} \col\sig\ast{\wt M}/{\wt N} \ra {\wt M}/{\wt N}$ induced by $\tau_{\wt M}$ is injective, which means that 
the pair $\ul{\wt M}/\ul{\wt N}=({\wt M}/{\wt N}, \tau_{\wt M}{\pmod {\wt N}})$ is an analytic $A(1)$-motive over $L$.

\end{definition}

\begin{lemma}\label{lem.dimad}
For an exact sequence 
$
0\ra \ul{\wt M'} \ra \ul{\wt M} \ra \ul{\wt M''} \ra 0
$
of analytic $A(1)$-motives, we have $\dim \ul{\wt M}=\dim \ul{\wt M'}+\dim\ul{\wt M''}$. 
If $\ul{\wt M}$ is of height $\leq h$, then $\ul{\wt M'}$ and $\ul{\wt M''}$ are so.
\end{lemma}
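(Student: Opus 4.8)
The plan is to deduce everything from the snake lemma applied to the diagram attached to the given exact sequence. Write it as $0\ra \ul{\wt M'}\ra\ul{\wt M}\ra\ul{\wt M''}\ra 0$, so that $0\ra\wt M'\ra\wt M\ra\wt M''\ra 0$ is an exact sequence of $\wt A_L$-modules and the Frobenius structures are compatible with the inclusion and the projection. First I would check that applying $\sig\ast=-\ot_{\wt A_L,\sig}\wt A_L$ keeps this row exact: since $\wt M''$ is the underlying module of an analytic $A(1)$-motive it is locally free over $\wt A_L$, so the sequence is locally split and remains exact after any base change; alternatively one invokes the flatness of $\sig\col\wt A_\cO\ra\wt A_\cO$ recorded above, which passes to $\wt A_L$ after inverting $\pi$. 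This produces a commutative diagram with exact rows whose three vertical arrows are $\tau_{\wt M'}$, $\tau_{\wt M}$, $\tau_{\wt M''}$, all of them injective because the three terms are analytic $A(1)$-motives.

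Then I would apply the snake lemma to this diagram. The three kernels vanish by injectivity of the $\tau$'s, so the snake exact sequence collapses to a short exact sequence of $\wt A_L$-modules
\[
0\ra\Cok(\tau_{\wt M'})\ra\Cok(\tau_{\wt M})\ra\Cok(\tau_{\wt M''})\ra 0.
\]
Each of these cokernels is a finite-dimensional $L$-vector space---for the outer two this is automatic from the sequence, a sub- or quotient module of a finite-dimensional $L$-vector space being again one---and all maps are $L$-linear via the structural embedding $L\hra\wt A_L$, so additivity of $\dim_L$ over short exact sequences yields $\dim\ul{\wt M}=\dim\ul{\wt M'}+\dim\ul{\wt M''}$. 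For the height assertion, if $\wt J^h\cdot\Cok(\tau_{\wt M})=0$ then the submodule $\Cok(\tau_{\wt M'})$ and the quotient $\Cok(\tau_{\wt M''})$ are likewise annihilated by $\wt J^h$, so $\ul{\wt M'}$ and $\ul{\wt M''}$ are of height $\leq h$.

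I do not expect a genuine obstacle here; the only point deserving a sentence of justification is the exactness of the top row after applying $\sig\ast$, which as indicated follows either from local freeness of $\wt M''$ or from the flatness of $\sig$ established earlier. Everything else is a formal diagram chase together with the additivity of vector-space dimension.
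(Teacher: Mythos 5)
Your proof is correct and takes essentially the same route as the paper, which invokes the snake lemma to produce the short exact sequence of cokernels and reads off both claims. You fill in the details (exactness after $\sig\ast$, injectivity killing the kernel terms) that the paper leaves implicit.
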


\begin{proof}
The snake lemma yields an exact sequence of $L$-vector spaces
\[
0 \lra \Cok(\tau_{\wt M'})\lra \Cok(\tau_{\wt M})  \lra \Cok(\tau_{\wt M''}) \lra 0, 
\]
which proves the claim.
\end{proof}

For an effective $A$-motive $\ul M=(M,\tau_M)$ over $L$, we define the {\it analytification} of $\ul M$ by 
\[
\ul M\ot \wt A_L:=(M\ot_{A_L}\wt A_L,\tau_M\ot \id).
\]
We immediately obtain the following:
\begin{proposition}
The correspondence $\ul M \mapsto \ul M\ot \wt A_L$ yields a functor from the category of effective $A$-motives over $L$
to that of analytic $A(1)$-motives over $L$.
If $\ul M$ is of rank $r$, dimension $d$, and height $\leq h$, then so is $\ul M\ot \wt A_L$.
\end{proposition}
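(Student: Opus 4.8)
The plan is to check, by flat base change along $A_L\ra\wt A_L$, that $(M\ot_{A_L}\wt A_L,\tau_M\ot\id)$ satisfies the defining properties of an analytic $A(1)$-motive and that the construction is functorial; the preservation of rank, dimension and height will come out along the way. All the ring-theoretic input is already in place in the preceding paragraphs: the map $A_L\ra\wt A_L$ is injective and flat, the Frobenius squares displayed above are co-Cartesian with finite flat vertical arrows, and $\wt J=J\wt A_L$ is a maximal ideal of $\wt A_L$ with residue field $L$.

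First I would observe that $\wt M:=M\ot_{A_L}\wt A_L$ is locally free of rank $r$ over $\wt A_L$, since local freeness of a given rank is preserved by base change. Inverting $\pi$ in the co-Cartesian Frobenius square for $\wt A_\cO/A_\cO$ yields the canonical identification $\sig\ast(M\ot_{A_L}\wt A_L)\cong(\sig\ast M)\ot_{A_L}\wt A_L$ compatible with $\sig$ on $\wt A_L$ --- the same reasoning as for the base change of effective $A$-motives along a field extension --- so $\tau_M\ot\id$ is a genuine map $\sig\ast\wt M\ra\wt M$. Flatness of $\wt A_L$ over $A_L$ makes $\tau_M\ot\id$ injective, and right-exactness of $-\ot_{A_L}\wt A_L$ identifies $\Cok(\tau_M\ot\id)$ with $\Cok(\tau_M)\ot_{A_L}\wt A_L$. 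When $\ul M$ has height $\leq h$ the ideal $J^h$ kills $\Cok(\tau_M)$, hence $\wt J^h=J^h\wt A_L$ kills $\Cok(\tau_M\ot\id)$, so the latter is a finitely generated module over $\wt A_L/\wt J^h$, which is a local Artinian $L$-algebra with residue field $L$ (local because $\wt J$ is the only prime above $\wt J^h$, and with residue field $L$ because $\wt A_L/\wt J=L$); in particular $\Cok(\tau_M\ot\id)$ is a finite-dimensional $L$-vector space. This already shows that $\ul M\ot\wt A_L$ is an analytic $A(1)$-motive of rank $r$ and height $\leq h$.

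It remains to prove $\dim(\ul M\ot\wt A_L)=\dim\ul M$, and here I would pass to lengths. Put $R=A_L/J^h$ and $\wt R=\wt A_L/\wt J^h=R\ot_{A_L}\wt A_L$; both are local Artinian $L$-algebras with residue field $L$, the induced map $R\ra\wt R$ is flat and local, and $\frm_R\wt R=\frm_{\wt R}$ because $J\wt A_L=\wt J$. Hence for the finite-length $R$-module $N:=\Cok(\tau_M)$ one has $\ell_{\wt R}(N\ot_R\wt R)=\ell_R(N)\cdot\ell_{\wt R}(\wt R/\frm_R\wt R)=\ell_R(N)$ (the last factor is $\ell_{\wt R}(L)=1$), while over an Artinian local $L$-algebra with residue field $L$ the length of a module coincides with its $L$-dimension; therefore $\dim_L(\Cok(\tau_M)\ot_{A_L}\wt A_L)=\dim_L\Cok(\tau_M)=d$. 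Functoriality is then immediate: a morphism $f\colon\ul M\ra\ul N$ induces $f\ot\id\colon M\ot_{A_L}\wt A_L\ra N\ot_{A_L}\wt A_L$ with $(f\ot\id)\circ(\tau_M\ot\id)=(f\circ\tau_M)\ot\id=(\tau_N\circ\sig\ast f)\ot\id=(\tau_N\ot\id)\circ\sig\ast(f\ot\id)$, and identities and composites are obviously respected.

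I do not expect a genuine obstacle: the statement is essentially formal once the flatness of $A_L\ra\wt A_L$, the co-Cartesian Frobenius squares, and the description of $\wt J$ are available, all of which are proved in the text. The only point requiring a little care --- and the reason I would argue with lengths rather than try to identify $A_L/J^h\ot_{A_L}\wt A_L$ with $A_L/J^h$ by an explicit computation --- is the invariance of the $L$-dimension of the cokernel under the base change $A_L\ra\wt A_L$, which is not surjective but is flat and ``unramified along $J$'' in the sense that $J\wt A_L$ is again maximal with residue field $L$.
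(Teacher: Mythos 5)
Your proof is correct, and it fills in precisely the verification that the paper leaves implicit: the paper states this Proposition with no argument at all (``We immediately obtain the following''), relying on the facts, established just before, that $A_L\to\wt A_L$ is flat, that the Frobenius square is co-Cartesian, and that $\wt J=J\wt A_L$ is maximal with residue field $L$. Your route through the length formula for the flat local map $A_L/J^h\to\wt A_L/\wt J^h$ is a clean way to settle the one non-trivial point, the invariance of $\dim_L\Cok(\tau_M)$; note that the same three facts already force $A_L/J^h\to\wt A_L/\wt J^h$ to be an isomorphism (a flat local map of Artinian local rings with $\frm_R\wt R=\frm_{\wt R}$ and identical residue fields is an isomorphism, by comparing associated gradeds), which gives the dimension equality without invoking the multiplicativity of length, but this is equivalent content and your argument is equally valid.
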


Let $\ul{\wt M}=(\wt M,\tau_{\wt M})$ be an analytic $A(1)$-motive over $L$ of rank $r$.
With $\ul{\wt M}$, one can associate Galois representations by the same way as effective $A$-motives.
For a maximal ideal $\frq$ of $A$,
consider the $\frq$-adic completion $A_{\frq,L^\sep}=A_\frq\wh \ot_{\bF_q}L^\sep$ of $A_{L^\sep}$
and define the $\frq$-adic realization of $\ul{\wt M}$ by 
\[
H^1_\frq(\ul{\wt M},A_\frq):=\{m\in \wt M \ot_{\wt A_L} A_{\frq,L^\sep}\mid \tau_{\wt M}(\sig\ast m)=m\},
\]
which is a free $A_\frq$-module of rank $r$ equipped with a continuous $G_L$-action.
We denote the (rational) $\frq$-adic Tate modules of $\ul{\wt M}$ by 
\[
T_\frq\ul{\wt M}:=\Hom_{A_\frq}(H^1_\frq(\ul{\wt M},A_\frq), A_\frq)\es\es \mbox{and}\es\es V_\frq\ul{\wt M}:=T_\frq\ul{\wt M}\ot_{A_\frq}Q_\frq.
\]

\begin{remark}
If $\ul{\wt M}$ is the analytification of an effective $A$-motive $\ul M$ over $L$, it follows by construction that $T_\frq \ul{\wt M} \cong T_\frq\ul{M}$ as $A_\frq[G_L]$-modules.
\end{remark}

\subsection{Reduction theory}


\begin{definition}
Let $\ul{\wt M}$ be an analytic $A(1)$-motive over $L$.

\begin{itemize}
\item[(1)] A {\it formal model} of $\ul{\wt M}$ is a pair $\ul{\wt\cM}=(\wt \cM,\tau_{\wt \cM})$ consisting of a finite locally free $\wt A_\cO$-module and an injective $\wt A_\cO$-homomorphism $\tau_{\wt\cM}\col \sig\ast \wt\cM \ra \wt\cM$ such that there is an isomorphism $\iota \col \wt M \lrai \wt \cM\ot_{\wt A_\cO}\wt A_L$ with $\tau_{\wt \cM}\circ \sig\ast\iota=\iota \circ \tau_{\wt M}$.
\item[(2)] A formal model $\ul{\wt \cM}$ of $\ul{\wt M}$ is called a {\it good model} if the induced $A_k$-homomorphism 
\[
\tau_{\wt \cM}\ot \id\colon \sig\ast\wt\cM\ot_{\wt A_\cO}A_k\lra  \wt\cM\ot_{\wt A_\cO}A_k
\]
is injective.
In this case, we say that $\ul{\wt M}$ has {\it good reduction}.
\item[(3)] We say that $\ul{\wt M}$ has {\it strongly semi-stable reduction} if there is a filtration (called a {\it semi-stable filtration}) of $\ul{\wt M}$ by saturated $A(1)$-submotives 
\[
0=\ul{\wt M}_0\subset \ul{\wt M}_1 \subset \cdots \subset \ul{\wt M}_n=\ul{\wt M}
\]
such that all the quotients $\ul{\wt M}_i/\ul{\wt M}_{i-1}$ have good reduction.
\end{itemize}
We say that an effective $A$-motive $\ul M$ over $K$ has {\it strongly semi-stable reduction} at a finite place $v$ of $K$ if its analytification $\ul M \ot_{A_K}{\wt A_{K_v}}$ does so. 
\end{definition}

\begin{remark}
By \cite[Theorem 4.7]{HH16}, it follows that $\ul{\wt \cM}$ is a good model of $\ul{\wt M}$ if and only if $\Cok(\tau_{\wt \cM})$ is a finite free $\cO$-module annihilated by some power of $\wt J_\cO$. 
This means that the reduction $(\wt\cM\ot_{\wt A_\cO}A_k,\tau_{\wt \cM}\ot \id)$ becomes an effective $A$-motive over the residue field $k$ having the same rank and dimension of $\ul{\wt M}$.
In addition, if $\ul{\wt M}$ is the analytification of an effective $A$-motive $\ul M$ over $L$, then we see that $\ul{\wt M}$ has good reduction if and if 
$\ul M$ has good reduction, and that the reduction $(\wt\cM\ot_{\wt A_\cO}A_k,\tau_{\wt \cM}\ot\id)$ is isomorphic to $(\cM\ot_{A_\cO}A_k,\tau_{\cM}\ot\id)$ for a good model $(\cM,\tau_{\cM})$ of $\ul M$.
\end{remark}

\begin{remark}
Gardeyn \cite[Definition 4.6]{Ga03} provides a more general notion of semi-stability as follows. 
An analytic $\tau$-sheaf $\ul{\cF}$ on the rigid analytic space $(\Spec A_L)^\an$ is said to be {\it semi-stable} if there is a non-empty open subscheme $\cX$ of $\Spec A_L$ such that the restriction $\cF|_{\cX^\an}$ to the associated rigid analytic space $\cX^\an$ of $\cX$ admits a suitable ``semi-stable filtration''
\[
0=\ul\cF_0 \subset \ul\cF_1 \subset \cdots \subset \ul\cF_n=\cF|_{\cX^\an}.
\] 
If $\cX$ can be taken as $\cX=\Spec A_L$, then $\ul\cF$ is said to be {\it strongly semi-stable}.
Although any analytic $\tau$-sheaf always becomes semi-stable after extending $L$ to a finite extension, there is an example of an analytic $\tau$-sheaf which is never strongly semi-stable even if one extends $L$ to any extensions; see \cite[\S\S 5.3]{Ga03}.  
\end{remark}

By the result \cite[Theorem 5.7]{Ga03} on analytic $\tau$-sheaves due to Gardeyn, we obtain an analytic $A(1)$-motive analog of the N\'eron-Ogg-Shafarevich criterion.
Recall that now $L$ is a completion of $K$ at a place lying above  $\frp$.

\begin{theorem}[Gardeyn]
For an analytic $A(1)$-motive $\ul{\wt M}$ over $L$, the following statements are equivalent.
\begin{itemize}
\item[$(1)$] $\ul{\wt M}$ has good reduction.
\item[$(2)$] For any maximal ideal $\frq$ of $A$ with $\frq\neq \frp$, the  $\frq$-adic representation $V_\frq \ul{\wt M}$ is unramified.
\item[$(3)$] There is a maximal ideal $\frq \neq \frp$ such that $V_\frq\ul{\wt M}$ is unramified. 
\end{itemize}
\end{theorem}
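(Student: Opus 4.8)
The plan is to reduce the equivalence to Gardeyn's N\'eron--Ogg--Shafarevich criterion for analytic $\tau$-sheaves \cite[Theorem 5.7]{Ga03}, by passing between analytic $A(1)$-motives over $L$ and analytic $\tau$-sheaves on $(\Spec A_L)^\an$ whose Frobenius cokernel is supported at the point $\wt J$. The implication $(2)\Rightarrow(3)$ is trivial, since $A$ (the coordinate ring of an affine curve over $\bF_q$) has infinitely many maximal ideals, so some $\frq\neq\frp$ exists. The implication $(1)\Rightarrow(2)$ I check directly: let $\ul{\wt\cM}$ be a good formal model of $\ul{\wt M}$ over $\wt A_\cO$, so $\Cok(\tau_{\wt\cM})$ is finite free over $\cO$ and killed by a power of $\wt J_\cO$; given $\frq\neq\frp$, choose $a\in\frq\setminus\frp$, so $\gamma(a)\in\cO\mal$ and $a\ot1-1\ot\gamma(a)$ is a unit in the $\frq$-adic completion $A_{\frq,\cO}$. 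Hence $\wt J_\cO$ generates the unit ideal in $A_{\frq,\cO}$, $\Cok(\tau_{\wt\cM})$ dies after $\ot_{\wt A_\cO}A_{\frq,\cO}$, $\tau_{\wt\cM}$ becomes an isomorphism there, and a standard unit-root argument (over the completion of the maximal unramified extension) shows that $H^1_\frq(\ul{\wt M},A_\frq)=(\wt\cM\ot_{\wt A_\cO}A_{\frq,L^\sep})^\tau$, hence also $V_\frq\ul{\wt M}$, is unramified.

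For the serious direction $(3)\Rightarrow(1)$ I first attach to $\ul{\wt M}$ an analytic $\tau$-sheaf $\ul\cF$ on $(\Spec A_L)^\an$ restricting to $\ul{\wt M}$ on the unit disc $\Sp\wt A_L$. As $\Cok(\tau_{\wt M})$ is killed by a power of $\wt J$, it is supported at the single point $\wt J\in\Sp\wt A_L$, so $\tau_{\wt M}$ restricts to an isomorphism over a boundary annulus of $\Sp\wt A_L$; there it defines an \'etale $\tau$-sheaf, which extends uniquely to an \'etale $\tau$-sheaf on the complement of the open disc around $\wt J$ in $(\Spec A_L)^\an$, and a gluing along the annulus (in the style of Beauville--Laszlo) produces $\ul\cF$, with $\Cok(\tau_\cF)$ supported at $\wt J$ and $\ul\cF|_{\Sp\wt A_L}\cong\ul{\wt M}$. (If $\ul{\wt M}$ is the analytification of an effective $A$-motive $\ul M$ over $L$, one may simply take $\ul\cF$ to be the analytic $\tau$-sheaf on $(\Spec A_L)^\an$ associated to $\ul M$ via GAGA.) I then record two compatibilities: $V_\frq\ul\cF\cong V_\frq\ul{\wt M}$ as $G_L$-representations for every $\frq\neq\frp$, since both are computed from the common base change to $A_{\frq,L^\sep}$ along $A_L\to\wt A_L\to A_{\frq,L^\sep}$; and $\ul\cF$ has good reduction in the sense of \cite{Ga03} if and only if $\ul{\wt M}$ has good reduction in the sense of the present section --- one direction restricts Gardeyn's integral good model along the $\pi$-adic formal completion to a good formal model over $\wt A_\cO$, the other extends a good formal model over $\wt A_\cO$ across $(\Spec A_L)^\an$ using the \'etale structure of $\tau_{\wt M}$ away from $\wt J$. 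Granting these, $(3)\Rightarrow(1)$ follows: if $V_\frq\ul{\wt M}\cong V_\frq\ul\cF$ is unramified for one $\frq\neq\frp$, then \cite[Theorem 5.7]{Ga03} gives good reduction of $\ul\cF$, hence of $\ul{\wt M}$.

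The main obstacle will be the bridge in the second paragraph, in two respects: (i) making the construction of $\ul\cF$ rigorous --- verifying that a cokernel supported at one point forces $\tau_{\wt M}$ to be invertible on a neighbourhood of the boundary of $\Sp\wt A_L$, that the resulting \'etale $\tau$-sheaf spreads out over $(\Spec A_L)^\an\setminus\{\wt J\}$, and that the annulus gluing is legitimate, all within Gardeyn's (and Hartl's) formalism; and (ii) matching Gardeyn's notion of good reduction for $\tau$-sheaves on $(\Spec A_L)^\an$ with the good-formal-model definition used here for analytic $A(1)$-motives, i.e.\ showing that restriction to and extension from $\wt A_\cO$ preserve good reduction both ways. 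The trivial implications and the $\frq$-adic Tate-module compatibility, by contrast, are routine.
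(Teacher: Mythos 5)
Your proposal is correct and takes essentially the same route as the paper: the paper gives no argument beyond citing Gardeyn's N\'eron--Ogg--Shafarevich criterion for analytic $\tau$-sheaves \cite[Theorem 5.7]{Ga03} together with the remark that an analytic $\tau$-sheaf whose Frobenius cokernel is supported at $\wt J$ restricts to an analytic $A(1)$-motive on the unit disc. What you add --- the direct unit-root proof of $(1)\Rightarrow(2)$, the explicit extension/gluing construction of $\ul\cF$ for $(3)\Rightarrow(1)$, and your candid flag that one must check the construction of $\ul\cF$ and the matching of the two notions of good reduction --- is precisely the content the paper leaves implicit, and your outline of it is sound.
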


\begin{corollary}\label{cor.unipotent}
If an analytic $A(1)$-motive $\ul{\wt M}$ over $L$ has strongly semi-stable reduction, then for any maximal ideal $\frq$ of $A$ with $\frq \neq \frp$, the  $I_L$-action on $V_\frq\ul{\wt M}$ is unipotent.  

\end{corollary}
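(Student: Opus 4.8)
The plan is to deduce the claim from the preceding theorem (Gardeyn's criterion) together with the exactness of the $\frq$-adic Tate module functor. By hypothesis there is a semi-stable filtration $0 = \ul{\wt M}_0 \subset \ul{\wt M}_1 \subset \cdots \subset \ul{\wt M}_n = \ul{\wt M}$ by saturated $A(1)$-submotives whose successive quotients $\ul{\wt M}_i/\ul{\wt M}_{i-1}$ all have good reduction; saturatedness makes each $0 \to \ul{\wt M}_{i-1} \to \ul{\wt M}_i \to \ul{\wt M}_i/\ul{\wt M}_{i-1} \to 0$ a short exact sequence of analytic $A(1)$-motives over $L$ with all three underlying $\wt A_L$-modules locally free, so ranks are additive along it.

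Next I would use that, for $\frq \neq \frp$, the contravariant functor $\ul{\wt M} \mapsto V_\frq\ul{\wt M}$ is exact on analytic $A(1)$-motives: base change to $A_{\frq,L^\sep}$ keeps the above sequences exact (the quotients being locally free), taking $\tau$-invariants is left exact, and, since $H^1_\frq(-,A_\frq)$ is free over $A_\frq$ of rank equal to the rank of the $A(1)$-motive, the rank additivity forces the resulting three-term sequences of $H^1_\frq$'s to become short exact after inverting $z$; dualizing over $Q_\frq$ gives short exact sequences $0 \to V_\frq(\ul{\wt M}_i/\ul{\wt M}_{i-1}) \to V_\frq(\ul{\wt M}_i) \to V_\frq(\ul{\wt M}_{i-1}) \to 0$ of $\frq$-adic representations of $G_L$. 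Composing these surjections, the inclusions $\ul{\wt M}_j \hookrightarrow \ul{\wt M}$ yield $G_L$-equivariant surjections $V_\frq\ul{\wt M} = V_\frq(\ul{\wt M}_n) \twoheadrightarrow \cdots \twoheadrightarrow V_\frq(\ul{\wt M}_0) = 0$, and I would set $W_i := \Ker\bigl(V_\frq\ul{\wt M} \to V_\frq(\ul{\wt M}_{n-i})\bigr)$, getting a $G_L$-stable exhaustive filtration $0 = W_0 \subset W_1 \subset \cdots \subset W_n = V_\frq\ul{\wt M}$ with $W_i/W_{i-1} \cong V_\frq(\ul{\wt M}_{n-i+1}/\ul{\wt M}_{n-i})$.

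Finally, the preceding theorem applied to the good-reduction quotients (with $\frq \neq \frp$) shows that each $V_\frq(\ul{\wt M}_{n-i+1}/\ul{\wt M}_{n-i})$, hence each graded piece $W_i/W_{i-1}$, is unramified; that is, $I_L$ acts trivially on it. Therefore $(g - \id)(W_i) \subseteq W_{i-1}$ for every $g \in I_L$ and every $i$, so $(g - \id)^n = 0$ on $V_\frq\ul{\wt M}$, i.e. $g$ acts unipotently, which is the assertion. The only step I expect to need genuine care is the exactness of $V_\frq$ on analytic $A(1)$-motives — the analytic-$A(1)$ counterpart of the exactness recorded earlier for effective $A$-motives over $K$, which may instead be imported from Gardeyn's work on analytic $\tau$-sheaves; the remainder is formal, the closing unipotence being the standard observation that a group fixing a filtration and acting trivially on its graded pieces acts unipotently.
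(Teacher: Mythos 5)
Your argument is correct and follows essentially the same route as the paper: the paper also applies the $\frq$-adic Tate-module functor to the semi-stable filtration to conclude that $V_\frq\ul{\wt M}$ has semi-simplification a direct sum of $V_\frq$'s of good-reduction $A(1)$-motives, which are unramified by the preceding Gardeyn-type theorem, and then invokes the standard equivalence between unramified semi-simplification and unipotent inertia action. You merely make explicit the exactness of $V_\frq$ and the passage from an unramified-graded filtration to unipotence, which the paper leaves implicit (and note that "inverting $z$" in your exactness step should read "tensoring with $Q_\frq$", since $z$ here is a $\frp$-adic, not $\frq$-adic, uniformizer — but this is an inessential slip).
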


\begin{proof}
By the semi-stable filtration of $\ul{\wt M}$, we see that $V_\frq \ul{\wt M}$ is isomorphic up to semi-simplification to a direct sum of $\frp$-adic representations coming from analytic $A(1)$-motives with good reduction.   
Thus the semi-simplification $(V_\frq\ul{\wt M})^\ss$ is unramified.
\end{proof}

For abelian varieties over a field $F$, there is a known criterion for good reduction other than that of N\'eron-Ogg-Shafarevich; 
Namely,  a good reduction criterion from the viewpoint of Barsotti-Tate groups is known;
see \cite[Proposition IX.\ 5.13]{SGA7} (for $\mathrm{ch}(F)=0$) and \cite[2.5]{dJ98} (for $\mathrm{ch}(F)>0$).  
In an equal-characteristic setting, effective local shtukas play the role of $F$-crystals of Barsotti-Tate groups.
As this analogy, Hartl and  H\"{u}sken \cite{HH16} provide the following criterion:

\begin{theorem}[Hartl and  H\"{u}sken]\label{thm.analyticgood}
Let $\ul{\wt M}$ be an analytic $A(1)$-motive over $L$ of height $\leq h$.
Then the following assertions are equivalent.
\begin{itemize}
\item[$(1)$] $\ul{\wt M}$ has good reduction.
\item[$(2)$] There is a $\tau$-equivariant isomorphism 
$f\col \ul{\wt M}\ot_{\wt A_L}A_{\frp,L} \lrai (N,\tau_N)\ot_{A_{\frp,\cO}}A_{\frp,L}$, where $N$ is a
 finite locally free $A_{\frp,\cO}$-module  equipped with an injective $A_{\frp,\cO}$-homomorphism $\tau_N\col \sig\ast N\ra N$ whose cokernel is annihilated by $\wt J^h$.  
\end{itemize}
\end{theorem}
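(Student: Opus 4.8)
The plan is to prove the two implications separately. The implication $(1)\Rightarrow(2)$ is a base change along a flat completion map: starting from a good formal model $\ul{\wt\cM}=(\wt\cM,\tau_{\wt\cM})$ of $\ul{\wt M}$ over $\wt A_\cO$, I would set $(N,\tau_N):=(\wt\cM\ot_{\wt A_\cO}A_{\frp,\cO},\,\tau_{\wt\cM}\ot\id)$, using the flat map $\wt A_\cO\to A_{\frp,\cO}$ and the co-cartesian $\sig$-square recorded before the definition of analytic $A(1)$-motives, so that $\sig\ast$ commutes with this base change. Then $N$ is finite locally free over $A_{\frp,\cO}$, $\tau_N$ is injective (flat base change of an injection), and, since by the remark following the definition of a good formal model $\Cok(\tau_{\wt\cM})$ is $\cO$-free and annihilated by $\wt J_\cO^h$ (the exponent being $h$ because $\ul{\wt M}$ has height $\le h$ and $\Cok(\tau_{\wt\cM})\hookrightarrow\Cok(\tau_{\wt M})$), base changing gives $\wt J^h\cdot\Cok(\tau_N)=0$; finally $f$ is the composite isomorphism $\ul{\wt M}\ot_{\wt A_L}A_{\frp,L}\cong\ul{\wt\cM}\ot_{\wt A_\cO}A_{\frp,L}\cong(N,\tau_N)\ot_{A_{\frp,\cO}}A_{\frp,L}$, which is $\tau$-equivariant by construction.

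For $(2)\Rightarrow(1)$ the input is the $\wt A_L$-module $\wt M$ with $\tau_{\wt M}$, the $A_{\frp,\cO}$-module $N$ with $\tau_N$, and the $\tau$-equivariant isomorphism $f$ over $A_{\frp,L}$. I would form the fibre product of $\wt A_\cO$-modules $\wt\cM:=\wt M\times_{N\ot_{A_{\frp,\cO}}A_{\frp,L}}N$, the map out of $\wt M$ being base change followed by $f$ and the map out of $N$ being base change, and show that $\wt\cM$ is finite locally free over $\wt A_\cO$ with $\wt\cM\ot_{\wt A_\cO}\wt A_L\cong\wt M$ and $\wt\cM\ot_{\wt A_\cO}A_{\frp,\cO}\cong N$. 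This is a formal-glueing (Beauville--Laszlo type) statement for the flat cartesian square whose rows invert $\pi$ and whose right column is the $(\frp,\pi)$-adic completion $\wt A_\cO\to A_{\frp,\cO}$ set up earlier in this section; one can either reduce it to the classical one-element Beauville--Laszlo lemma by localizing on $\Spec A$, where the Dedekind domain $A$ makes $\frp$ principal, or appeal to the descent results underlying \cite[Theorem 4.7]{HH16}. Since the $\sig$-square is co-cartesian, $\sig\ast$ commutes with forming this fibre product, so $\tau_{\wt M}$ and $\tau_N$ glue to an $\wt A_\cO$-homomorphism $\tau_{\wt\cM}\colon\sig\ast\wt\cM\to\wt\cM$, injective because injectivity may be checked after the two flat maps to $\wt A_L$ and to $A_{\frp,\cO}$; thus $\ul{\wt\cM}$ is a formal model of $\ul{\wt M}$.

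It then remains to see this model is \emph{good}. The cokernel $\Cok(\tau_{\wt\cM})$ is annihilated by $\wt J_\cO^h$, hence supported on $V(\frp)$, so its formation commutes with the completion $\wt A_\cO\to A_{\frp,\cO}$ and it is identified with $\Cok(\tau_N)$, which is annihilated by $\wt J^h$ by hypothesis; moreover $(N,\tau_N)$ then lies in the category of locally free $A_{\frp,\cO}$-modules with injective Frobenius and $J$-power-torsion cokernel, which is equivalent to that of effective local shtukas over $\cO$ by Remark \ref{rem.localshtukaeq}, so $\Cok(\tau_N)$ is automatically finite free over $\cO$ by Lemma \ref{lemdimension} (compare Example \ref{ex.motiveshtuka}). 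Hence $\Cok(\tau_{\wt\cM})$ is finite free over $\cO$ and annihilated by $\wt J_\cO^h$, and the remark characterizing good formal models gives $(1)$. I expect the main obstacle to be the glueing step of the second paragraph: one must verify that finite locally free modules genuinely descend along this square, the delicate point being that $\frp$ is principal in $A_{\frp,\cO}$ but not, in general, in $\wt A_\cO$, so the classical single-element Beauville--Laszlo lemma does not apply verbatim.
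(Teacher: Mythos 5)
The paper does not actually prove this theorem; it is cited from Hartl--H\"usken \cite{HH16}, with only the follow-up remark that for a good model $\ul{\wt\cM}$ the pair $(N,\tau_N)$ in $(2)$ may be taken to be $(\wt\cM\ot_{\wt A_\cO}A_{\frp,\cO},\tau_{\wt\cM}\ot\id)$. That is exactly your $(1)\Rightarrow(2)$, and that direction of your argument is fine. The issue is $(2)\Rightarrow(1)$: the worry you raise in your last sentence is a genuine gap, and the fix you propose does not close it.

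The square with corners $\wt A_\cO$, $\wt A_L$, $A_{\frp,\cO}$, $A_{\frp,L}$ is not a Beauville--Laszlo square, and shrinking $\Spec A$ to make $\frp$ principal does not make it one. If $s$ is a local generator of $\frp$, then Beauville--Laszlo for $(\wt A_\cO,s)$ glues finite locally free modules over $\wt A_\cO[\frac{1}{s}]$ and over the $s$-adic completion $A_{\frp,\cO}$; but your top-right corner is $\wt A_L=\wt A_\cO[\frac{1}{\pi}]$, the localization at $\pi$, which is a different ring from $\wt A_\cO[\frac{1}{s}]$. Geometrically, $D(\pi)$ together with the formal neighborhood of $V(\frp)$ do \emph{not} cover $\Spec\wt A_\cO$: they miss the closed locus $V(\pi)\setminus V(\frp)$, i.e.\ the points of the special fibre $\Spec A_k$ other than the one lying over $\frp$. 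With no data prescribed over that locus, there is no reason the fibre product $\wt M\times_{N\ot A_{\frp,L}}N$ should be a finite locally free $\wt A_\cO$-module, let alone one whose base changes recover $\wt M$ and $N$. What is actually required --- and what the argument in \cite{HH16} supplies --- is an intermediate step that produces a model over the open $D(\frp)\subset\Spec\wt A_\cO$ from $\wt M$ alone, exploiting that $\tau_{\wt M}$ is an isomorphism away from $V(\wt J)$ (which lies in the tube over $V(\frp)$), and only then glues that model with $N$ by genuine Beauville--Laszlo along $\frp$. Your sketch elides this step; as written, the descent you invoke simply does not exist for the square you wrote down.
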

Especially, for   a good model $\ul{\wt \cM}$ of $\ul{\wt M}$, the above pair $(N,\tau_N)$ can be  taken as $(N,\tau_N)=(\cM\ot_{\wt A_\cO}A_{\frp,\cO}, \tau_{\cM}\ot\id)$.  
As we have observed in Remark \ref{rem.localshtukaeq}, such $(N,\tau_N)$ corresponds to an effective local shtuka $\ul{\wh M}$ over $\cO$ of height $\leq h$.
In particular, we obtain the following:

\begin{proposition}\label{prop.localan}
Let $\ul{\wt M}$ be an analytic $A(1)$-motive over $L$ of height $\leq h$.
If $\ul{\wt M}$ has  good reduction, then there is an    effective local shtuka $\ul{\wh M}$ of height $\leq h$ over $\cO$ which has the same rank and dimension of $\ul{\wt M}$, and satisfies $T_\frp\ul{\wt M} \cong T_\frp\ul{\wh M}$ as $A_\frp[G_L]$-modules.
\end{proposition}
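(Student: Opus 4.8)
The plan is to combine the Hartl--H\"usken good reduction criterion (Theorem~\ref{thm.analyticgood}) with the equivalence between locally free $A_{\frp,\cO}$-modules-with-Frobenius and effective local shtukas recalled in Remark~\ref{rem.localshtukaeq}, following the construction of Example~\ref{ex.motiveshtuka}. Since $\ul{\wt M}$ has good reduction, fix a good model $\ul{\wt\cM}=(\wt\cM,\tau_{\wt\cM})$, so that $\wt\cM$ is finite locally free of rank $r=\rk\ul{\wt M}$ over $\wt A_\cO$. By Theorem~\ref{thm.analyticgood} and the discussion following it, the pair $(N,\tau_N):=(\wt\cM\ot_{\wt A_\cO}A_{\frp,\cO},\,\tau_{\wt\cM}\ot\id)$ is a finite locally free $A_{\frp,\cO}$-module of rank $r$ with an injective Frobenius and $\Cok(\tau_N)$ annihilated by $\wt J^h$, and by Remark~\ref{rem.localshtukaeq} it corresponds under $(N,\tau_N)\mapsto(N/\fra_0,(\tau_N\ot\id)^{d_\frp})$ to an effective local shtuka $\ul{\wh M}=(\wh M,\tau_{\wh M})$ over $\cO$ of height $\leq h$. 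This exhibits the desired $\ul{\wh M}$; what remains is to match rank and dimension and to compare $\frp$-adic Tate modules.

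For the numerical invariants: $\wh M=N/\fra_0\cong N\ot_{A_{\frp,\cO}}\cO\zb$ is finite locally free of rank $r$ over the local ring $\cO\zb$, hence free of rank $r$, so $\rk\ul{\wh M}=\rk\ul{\wt M}$. For the dimension, $\Cok(\tau_{\wt\cM})$ is finite free of rank $\dim\ul{\wt M}$ over $\cO$ (the good-model condition), and since $\gamma(\frp)\subset\pi\cO$ this module, being supported on $\wt J_\cO$ and finite over $\cO$, is unchanged under the $(\frp,\pi)$-adic completion $\wt A_\cO\to A_{\frp,\cO}$; thus $\Cok(\tau_N)\cong\Cok(\tau_{\wt\cM})$. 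Tracking the construction exactly as in Example~\ref{ex.motiveshtuka} identifies $\Cok(\tau_N)$ with $\wh M/\tau_{\wh M}(\hsig\ast\wh M)$, which is free of rank $\dim\ul{\wh M}$ over $\cO$ by Lemma~\ref{lemdimension}; hence $\dim\ul{\wh M}=\dim\ul{\wt M}$. (The height bound $\leq h$ is already part of Remark~\ref{rem.localshtukaeq}, using $J\cdot A_{\frp,\cO}/\fra_0=(\zz)$.)

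For the Tate modules I would first observe that $T_\frp\ul{\wt M}$ depends only on $(\wt M\ot_{\wt A_L}A_{\frp,L},\tau_{\wt M})$, because $A_{\frp,L^\sep}$ is the base change of $A_{\frp,L}$ along $L\hra L^\sep$. Hence the $\tau$-equivariant isomorphism $f$ of Theorem~\ref{thm.analyticgood} induces a $G_L$-equivariant identification $H^1_\frp(\ul{\wt M},A_\frp)\cong\{\,n\in N\ot_{A_{\frp,\cO}}A_{\frp,L^\sep}\mid\tau_N(\sig\ast n)=n\,\}$. Now apply the decomposition $A_{\frp,L^\sep}=\prod_{i=0}^{d_\frp-1}A_{\frp,L^\sep}/\fra_i$ with each factor isomorphic to $L^\sep\zb$ and $\sig$ cyclically permuting them (as in Example~\ref{ex.motiveshtuka}): a $\sig$-fixed vector $(n_0,\dots,n_{d_\frp-1})$ is determined by $n_0$, and the equation reduces to $\tau_{\wh M}(\hsig\ast n_0)=n_0$ in $\wh M\ot_{\cO\zb}L^\sep\zb$, i.e.\ $n_0\in\wc T_\frp\ul{\wh M}$. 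This gives a $G_L$-equivariant isomorphism $H^1_\frp(\ul{\wt M},A_\frp)\lrai\wc T_\frp\ul{\wh M}$ (all the Galois actions come from the one on $L^\sep$), and dualizing over $A_\frp$ yields $T_\frp\ul{\wt M}\cong T_\frp\ul{\wh M}$ as $A_\frp[G_L]$-modules; this is the same argument as for effective $A$-motives with good reduction, cf.\ \cite[Proposition~3.4.6]{HK20}.

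No single step here is deep: the proposition reduces to splicing together the cited results of Hartl--H\"usken and Hartl--Kim. The part I expect to cost the most care in writing --- and hence the main obstacle --- is the bookkeeping of the several rings in play ($\wt A_\cO$, $A_{\frp,\cO}$, $\cO\zb$ on the integral side; $\wt A_L$, $A_{\frp,L}$, $A_{\frp,L^\sep}$, $L^\sep\zb$ on the Galois side), together with checking that the passage from $(N,\tau_N)$ over $A_{\frp,\cO}$ to $\ul{\wh M}$ over $\cO$ via the $\tau\mapsto\tau^{d_\frp}$ device on the $\fra_0$-factor is compatible with the $G_L$-action, so that the final comparison is genuinely $G_L$-equivariant and not merely $A_\frp$-linear.
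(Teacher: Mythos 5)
Your argument is correct and follows the paper's proof essentially line by line: both build $\ul{\wh M}$ from the good model via the $\fra_0$-factor of $A_{\frp,\cO}$ as in Example~\ref{ex.motiveshtuka}, both identify $\Cok(\tau_{\wh M})$ with $\Cok(\tau_{\wt\cM})$ to match dimensions (you spell out the completion-invariance of the cokernel a bit more explicitly than the paper, which only writes the displayed isomorphism and asserts the consequence), and both deduce the Tate-module comparison by decomposing over the $\fra_i$-factors of $A_{\frp,L^\sep}$ and reading off that a $\tau$-fixed tuple is determined by its $\fra_0$-component. No genuine difference in method.
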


\begin{proof}
Let $\ul{\wt \cM}=(\wt \cM,\tau_{\wt \cM})$ be a good model  of $\ul{\wt M}$. 
As  with Example \ref{ex.motiveshtuka}, 
for the ideals $\fra_i$ of $A_{\frp,\cO}$ generated by $\{b\ot 1-1\ot \gamma(b)^{q^i} \mid b\in \bF_\frp\}$,
we consider the decomposition 
$A_{\frp,\cO}=\prod_{i=0}^{d_\frp-1}A_{\frp,\cO}/\fra_i$ and 
define $\ul{\wh M}:=(\wh M,\tau_{\wh M}):=(\wt \cM\ot_{\wt A_\cO}A_{\frp,\cO}/\fra_0,\tau_{\wt \cM}^{d_\frp}\ot\id)$, which
 is an effective local shtuka over $\cO$ of height $\leq h$.
It has the same rank of $\ul{\wt M}$ by construction and  admits an isomorphism
\[
\bigoplus_{i=0}^{d_\frp-1}(\tau_{\wt\cM}\otimes\id)^i\ {\rm mod}\  \fra_i \colon \left( 
\bigoplus_{i=0}^{d_\frp-1}\sig^{i*}\wh M,
\tau_{\wh M}\oplus\bigoplus_{i \neq 0}\id
\right)
\lrai
\ul{\wt \cM}\otimes_{A_{\cO}}A_{\frp,\cO}.
\]
From this, it follows that $\Cok(\tau_{\wh M})\cong \Cok(\tau_{\wt\cM})$ and hence 
$\dim\ul{\wh M}=\dim\ul{\wt M}$.
Now, using the decomposition $A_{\frp,\cO}=\prod_{i=0}^{d_\frp-1}A_{\frp,\cO}/\fra_i$, we have
\[
\wt M\ot_{\wt A_L}A_{\frp,L^\sep} \cong (\wt \cM \otimes_{\wt A_\cO} A_{\frp,\cO})\ot_{A_{\frp,\cO}}A_{\frp,L^\sep}\cong \prod_{i=0}^{d_\frp-1}\wt \cM \ot_{\wt A_\cO}A_{\frp,L^\sep}/\fra_i.
\]
Thus the element $m=(m_i)\in \wt M\ot_{\wt A_L}A_{\frp,L^\sep}=\prod_{i=0}^{d_\frp-1}\wt \cM \ot_{\wt A_\cO}A_{\frp,L^\sep}/\fra_i$
satisfies $\tau_{\wt M}(\sig\ast m)=m$ if and only if $m_{i+1}=\tau_{\wt \cM}(\sig\ast m_i)$ for $i \neq d_\frp-1$ and $m_0=\tau_{\wt \cM}(\sig\ast m_{d_\frp-1})$, so that $m_0=\tau_{\wt\cM}^{d_\frp}(\sig^{d_\frp *}m_0)=\tau_{\wh M}(\hsig\ast m_0)$
 and $m=(m_i)$ can be recovered by $m_0$.
Hence $(m_i)\mapsto m_0$ gives rise to the isomorphism $H^1_\frp(\wt {\ul M},A_\frp) \rai \wc T_\frp\ul{\wh M}$ and so we get $T_\frp\ul{\wt M} \cong T_\frp\ul{\wh M}$ by taking dual of both sides.
\end{proof}

\begin{theorem}\label{thm.tamemot}
Let $\ul M$ be an effective $A$-motive over $L$ of height $\leq h$ and suppose it has strongly semi-stable reduction.
If  the ramification index $e$ of the extension $L/\gamma(Q_\frp)$ satisfies $eh< q_\frp-1$, then $\TI(V_\frp\ul M) \subset [0,eh]$ and the equation 
\[
e\cdot \dim\ul M=\Sigma \TI(V_\frp \ul M)
\] 
holds.
\end{theorem}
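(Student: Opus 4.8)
The plan is to reduce Theorem~\ref{thm.tamemot} to the corresponding statement for effective local shtukas, namely Proposition~\ref{prop.localtame}, by exploiting the semi-stable filtration and the fact that both the tame inertia weights and the dimension are additive along short exact sequences. The key point is that strong semi-stability is \emph{defined} via a filtration whose graded pieces have good reduction, and for each such graded piece the good-reduction analytic $A(1)$-motive is matched (via Proposition~\ref{prop.localan}) with an effective local shtuka of the same rank, dimension, and height, with identified $\frp$-adic Tate modules as $A_\frp[G_L]$-modules. So the real content of the proof is already packaged in Proposition~\ref{prop.localtame}; what remains is a bookkeeping argument across the filtration.

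First I would pass to the analytification $\ul{\wt M}:=\ul M\ot_{A_L}\wt A_L$; by the remark after Proposition~\ref{prop.localan}'s setup we have $T_\frp\ul{\wt M}\cong T_\frp\ul M$ as $A_\frp[G_L]$-modules, hence the same $\frp$-adic representation $V_\frp\ul M\cong V_\frp\ul{\wt M}$ and the same $\TI$. Then I would invoke the semi-stable filtration
\[
0=\ul{\wt M}_0\subset \ul{\wt M}_1\subset\cdots\subset \ul{\wt M}_n=\ul{\wt M}
\]
by saturated $A(1)$-submotives with each graded piece $\ul{\wt M}_i/\ul{\wt M}_{i-1}$ of good reduction. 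By Lemma~\ref{lem.dimad} each quotient is again of height $\le h$ and $\dim\ul{\wt M}=\sum_{i=1}^n\dim(\ul{\wt M}_i/\ul{\wt M}_{i-1})$. Taking $\frp$-adic Tate modules is exact (saturatedness is exactly what makes the quotients $A(1)$-motives, so the short exact sequences of $A(1)$-motives give short exact sequences of Galois representations), so we get a filtration of $V_\frp\ul{\wt M}$ with graded pieces $V_\frp(\ul{\wt M}_i/\ul{\wt M}_{i-1})$; hence $(V_\frp\ul{\wt M})^\ss\cong\bigoplus_i V_\frp(\ul{\wt M}_i/\ul{\wt M}_{i-1})^\ss$ and consequently $\TI(V_\frp\ul M)$ is the multi-set union of the $\TI\big(V_\frp(\ul{\wt M}_i/\ul{\wt M}_{i-1})\big)$.

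Next, for each $i$ apply Proposition~\ref{prop.localan} to the good-reduction analytic $A(1)$-motive $\ul{\wt M}_i/\ul{\wt M}_{i-1}$: there is an effective local shtuka $\ul{\wh M}^{(i)}$ over $\cO$ of height $\le h$ with the same rank and dimension, and $T_\frp(\ul{\wt M}_i/\ul{\wt M}_{i-1})\cong T_\frp\ul{\wh M}^{(i)}$ as $A_\frp[G_L]$-modules. Now the hypothesis $eh<q_\frp-1$ is precisely $eh<\wh q-1$ (recall $\wh q=q^{d_\frp}=q_\frp$ in the notation of \S\ref{sec.local}), so Proposition~\ref{prop.localtame} applies to each $\ul{\wh M}^{(i)}$ and gives $\TI\big(V_\frp\ul{\wh M}^{(i)}\big)\subset[0,eh]$ together with $e\cdot\dim\ul{\wh M}^{(i)}=\Sigma\,\TI\big(V_\frp\ul{\wh M}^{(i)}\big)$. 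Summing over $i$ and using additivity of both $\dim$ (Lemma~\ref{lem.dimad}) and $\Sigma\TI$ (multi-set union), we obtain $\TI(V_\frp\ul M)\subset[0,eh]$ and $e\cdot\dim\ul M=\Sigma\TI(V_\frp\ul M)$, as desired.

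The only point requiring care — and the step I would regard as the main potential obstacle — is verifying that a short exact sequence of analytic $A(1)$-motives coming from the semi-stable filtration induces a short exact sequence on $\frp$-adic Tate modules; this is where saturatedness is essential, since without it the quotient is not an $A(1)$-motive and the Tate module functor need not behave well. I would check exactness on $H^1_\frp(-,A_\frp)=(-\ot_{\wt A_L}A_{\frp,L^\sep})^{\tau=1}$ by noting that $\wt A_L\ra A_{\frp,L^\sep}$ is flat, so the short exact sequence of $\wt A_L$-modules stays exact after this base change; the $\tau$-invariants of the middle term surject onto those of the quotient because, after tensoring up to $L^\sep\llbracket\cdot\rrbracket$-type coefficients and using that the relevant $\tau$-modules are ``étale'' away from $\wt J$ (equivalently, using Proposition~\ref{prop.torcat}/\cite[Prop.~9.2.2]{Ki09} for the torsion layers and passing to the limit), the connecting map into $H^2$-type obstructions vanishes. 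Alternatively, and more cleanly, I would deduce exactness directly from the exactness of the local-shtuka Tate module functor after applying Proposition~\ref{prop.localan} graded-piece by graded-piece; either way the argument is routine but should be stated explicitly.
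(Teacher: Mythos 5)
Your proof is correct and takes essentially the same route as the paper: pass to the analytification, reduce to the graded pieces of the semi-stable filtration via $V_\frp\ul M\simeq_\ss\bigoplus_iV_\frp(\Gr_i\ul{\wt M})$, apply Lemma~\ref{lem.dimad} for additivity of $\dim$ and height bounds, then Proposition~\ref{prop.localan} to replace each good-reduction graded piece by an effective local shtuka, and finally Proposition~\ref{prop.localtame}. The one place you linger — justifying that the filtration induces a semi-simplification decomposition of the Tate module — is taken as known in the paper (it is the same additivity already used, without comment, in the proof of Corollary~\ref{cor.unipotent}), so your extra care is fine but not a genuine divergence.
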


\begin{proof}
By assumption, the analytification $\ul{\wt M}:=\ul{M}\ot \wt A_L$ of $\ul M$ admits a semi-stable filtration
\[
0=\ul{\wt M}_0\subset \ul{\wt M}_1 \subset \cdots \subset \ul{\wt M}_n=\ul{\wt M}
\]
and so we have $V_\frp\ul M \simeq_\ss \bigoplus_{i=1}^nV_\frp(\Gr_i\ul {\wt M})$, where $\Gr_i\ul {\wt M}:=\ul{\wt M}_i/\ul{\wt M}_{i-1}$ for each $i$.
This implies that $\TI(V_\frp\ul M)=\coprod_{i=1}^n \TI \left( V_\frp(\Gr_i\ul {\wt M}) \right)$.
By Lemma \ref{lem.dimad},  each $\Gr_i\ul {\wt M}$ 
is of height $\leq h$ and $\dim \ul M=\sum_{i=1}^n \dim \Gr_i\ul {\wt M}$.
Since all $\Gr_i\ul {\wt M}$ have good reduction, Proposition \ref{prop.localan} implies that  there are effective local shtukas $\ul{\wh M}_i$ of height $\leq h$ such that  
$\dim \Gr_i\ul{\wt M}=\dim \ul{\wh M}_i$ and $V_\frp\left(\Gr_i\ul{\wt M}\right) \cong V_\frp\ul{\wh M}_i$.
Applying Proposition \ref{prop.localtame} to each $\ul{\wh M}_i$, we get the conclusion.
\end{proof}

%
%

\section{Congruences of Galois representations}\label{sec.result}

In what follows, we consider effective $A$-motives over the $A$-field $(K,\gamma\col A\ra K)$  as in \S\ref{section.intro} and associated $\frp$-adic representations of $G_K$.
The goal of this section is to prove our results on two $\frp$-adic representations of $G_K$ arising from effective $A$-motives with strongly semi-stable reduction at finite places $v,u$ of $K$, where often $v$ is not lying above $\frp$ and $u \mid \frp$.
The key ingredient is the {\it Weil weights} at $v$ of strongly semi-stable effective $A$-motives, which are determined by eigenvalues of a lift $\Frob_v \in G_v$ of the arithmetic Frobenius $\phi_v \in G_{\bF_v}$.  
Recall that we denote by $d_v=[\bF_v:\bF_q]$ and $q_v=\#\bF_v=q^{d_v}$, and that we write $V_v=V|_{G_v}$ for a representation $V$ of $G_K$.
As in \S\S \ref{subsec.notation}, we fix an algebraic closure $\bar Q$ of $Q$ and an extension $|\cdot|_\infty$ of the normalized absolute value at the place $\infty$ of $Q$.

\subsection{Weil weights}

Let $v$ be a finite place of $K$.

\begin{lemma}\label{lem.charpoly}
Let $\ul{\bar{M}}=({\bar{M}}, \tau_{\bar{M}})$ be an effective $A$-motive over $\bF_v$ and $\frp \subset A$ a maximal ideal with $v\nmid \frp$.
Then  the characteristic polynomial $\det(X-\phi_v\mid T_\frp\ul{\bar{M}})$ has coefficients in $A$ which are independent of $\frp$.
\end{lemma}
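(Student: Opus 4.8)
The plan is to reduce the claim to the situation of an effective $A$-motive \emph{over $K$} with good reduction, where the analogous statement has already been quoted in the excerpt. The point is that $\ul{\bar M}$ lives over the finite field $\bF_v$, and a locally free $A_{\bF_v}$-module with injective Frobenius $\tau_{\bar M}$ whose cokernel is $J$-power-torsion is automatically a ``good model'' over the (trivial) valuation ring $\bF_v$ of itself; but more to the point, $\bF_v$ is a finite field, hence we may lift $\ul{\bar M}$ along $\cO_v \thra \bF_v$ — though actually the cleanest route avoids lifting altogether. First I would observe that for any maximal ideal $\frp$ with $v \nmid \frp$, the pair $(\bar M, \tau_{\bar M})$ defines, via the construction of \S3.2 applied over the $A$-field $\bF_v$, a free $A_\frp$-module $T_\frp\ul{\bar M}$ of rank $\rk\ul{\bar M}$ with a continuous action of $G_{\bF_v} = \widehat{\bZ}$, and hence a well-defined characteristic polynomial $P(X) := \det(X - \phi_v \mid T_\frp\ul{\bar M}) \in A_\frp[X]$.

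The main step is to identify this characteristic polynomial with a known one whose coefficients are already in $A$. The natural way is to invoke the compatibility of the Tate-module construction with base change together with the result quoted just before Proposition~3.2.? in the excerpt: if $\ul{\cM}$ is a good model over $\cO_v$ of an effective $A$-motive $\ul M$ over $K_v$ with good reduction, then by the remark following the definition of good reduction, $(\cM\ot_{A_{\cO_v}}A_{\bF_v},\tau_\cM\ot\id)$ is exactly an effective $A$-motive over $\bF_v$, and by Gardeyn's theory (the unramifiedness in statement (2)/(3) of his N\'eron--Ogg--Shafarevich criterion, together with the proof that $T_\frp$ of a model and of its reduction agree — this is the content of how $\wh{\ul M}_\frp$ is built in Example~\ref{ex.motiveshtuka}) one has $T_\frp\ul M|_{G_v}\cong T_\frp\ul{\bar M}$ as $G_v$-modules, with $I_v$ acting trivially. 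Since by the Proposition quoted in the excerpt $P_{\ul M,v}(X)$ has coefficients in $A$ independent of $\frp$, the same holds for $P(X)$. The only thing to check is that \emph{every} $\ul{\bar M}$ over $\bF_v$ arises this way; but this is clear because $\bF_v$ is perfect (indeed finite), so one can take $\cM := \bar M$ viewed over $A_{\bF_v}$ itself — or, if one wants an honest characteristic-zero-free lift, use that $A_{\cO_v}\thra A_{\bF_v}$ is surjective on locally free modules and on the relevant $\tau$'s because $\cO_v$ is complete, so $\ul{\bar M}$ lifts to a good model over $\cO_v$.

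An alternative, more self-contained argument I would keep in reserve: work directly over $\bF_v$. Choose $\frp_0$ and $\frp_1$ two maximal ideals prime to $v$; it suffices to show $P_{\frp_0}(X) = P_{\frp_1}(X)$ in $\bar Q[X]$, since one already knows each lies in $Q_{\frp_j}[X]$ and then a standard density/integrality argument places the common polynomial in $A[X]$. For this one computes the reverse characteristic polynomial $\det(1 - \phi_v^{-1}X\mid H^1_\frp(\ul{\bar M},A_\frp))$ as the ``$L$-factor'' $\det(1 - X\cdot(\text{total }\tau^{d_v})\mid \bar M\ot_{\bF_v}\overline{\bF_v})^{-1}$-style expression coming from the Lang isogeny / the fact that over the finite field $\bF_v$ the $q_v$-Frobenius on $\bar M$ is $\tau_{\bar M}^{d_v}$ up to the evident identification, and this is manifestly an element of $A[X]$ independent of $\frp$ — this is precisely the special case of Gardeyn \cite[Theorem 7.3]{Ga02} already cited.

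The hard part will be bookkeeping rather than conceptual: making the identification $T_\frp\ul{\bar M} = T_\frp\ul M|_{G_v}$ precise when $d_\frp > 1$, since then $A_{\frp,\bF_v^\sep}$ decomposes into $d_\frp$ factors permuted by $\sigma$ and one must pass to the $\wh\sigma = \sigma^{d_\frp}$-picture exactly as in Example~\ref{ex.motiveshtuka}; and, in the direct approach, correctly matching the arithmetic Frobenius $\phi_v$ (which raises to the $q_v = q^{d_v}$ power, not the $q$ power) with the appropriate iterate of $\tau_{\bar M}$. Neither presents a real obstruction — both are routine once one is careful with the two ``Frobenius'' operators $\sigma$ (the $q$-power coefficient twist) and $\phi_v$ (the geometric/arithmetic Galois Frobenius at $v$) — so I would present the short version via base change and Gardeyn's $P_{\ul M,v}$, relegating the finite-field computation to a parenthetical remark.
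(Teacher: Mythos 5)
Your main route is genuinely different from the paper's, and it has a gap you would need to fill. You propose to lift $\ul{\bar M}$ over $\bF_v$ to a good model over $\cO_v$, pass to the generic fibre, and invoke the unnumbered Proposition in \S 3.2 (``This is a special case of Gardeyn \cite[Theorem 7.3]{Ga02}''). But that Proposition, and Gardeyn's Theorem~7.3, concern effective $A$-motives \emph{over the global field} $K$; your lift produces an effective $A$-motive over the \emph{local} field $K_v$. Nothing in the excerpt licenses applying that result over $K_v$: to do so rigorously you would either have to spread the local lift out to a global $A$-motive (which is not obvious and is not achieved by completeness of $\cO_v$), or re-run Gardeyn's argument in the local/finite-field setting --- and the latter is precisely the content of the lemma you are asked to prove, so you would be going in a circle. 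Also, the throwaway suggestion ``take $\cM := \bar M$ viewed over $A_{\bF_v}$ itself'' does not parse: a model is required to live over $A_{\cO_v}$, not $A_{\bF_v}$.

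The paper avoids all of this by working directly over $\bF_v$. It views $\bar M$ as a finite projective $A$-module equipped with the $\sigma$-semilinear map $\tau$ induced by $\tau_{\bar M}$, invokes B\"ockle--Pink's well-definedness of the ``dual characteristic polynomial'' $\det(\id - x\tau \mid \bar M)\in A[x]$ for semilinear endomorphisms of projective modules \cite[Lemma-Definition 8.1.1]{BP09}, and then makes the key observation you skip: $\sigma^{d_v}$ is the identity on $\bF_v$, so $\tau^{d_v}$ is $A_{\bF_v}$-\emph{linear}, whence by \cite[Lemma 8.1.4]{BP09} one has $\det(\id - x\tau\mid\bar M)=\det(\id - x^{d_v}\tau^{d_v}\mid\bar M)\in 1+x^{d_v}A[x^{d_v}]$. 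Combining this with the Taguchi--Wan trace formula $\det(\id - x^{d_v}\phi_v\mid T_\frp\ul{\bar M})=\det(\id - x^{d_v}\tau^{d_v}\mid\bar M)$ from \cite[\S6]{TW96} and substituting $X=x^{-d_v}$ gives the claim, with $\frp$-independence built in because the left-hand side of the Taguchi--Wan identity is the only place $\frp$ enters. Your ``in reserve'' sketch is in the right spirit --- it does invoke the $d_v$-th iterate of $\tau$ and Taguchi--Wan-style $L$-factor ideas --- but it does not isolate the $A_{\bF_v}$-linearity of $\tau^{d_v}$, which is exactly what makes the substitution $X=x^{-d_v}$ produce a polynomial in $A[X]$ rather than something merely in $A[x]$, and it again defers the computation to ``Gardeyn \cite[Theorem 7.3]{Ga02}'' rather than carrying it out.
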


\begin{proof}
It is known  that
there is a group homomorphism $\tau\col \bar M\ra \bar M$ 
satisfying $\tau(\lam m)=\sig(\lam)\tau(m)$ for $\lam \in A_{\bF_v}$ such that $\tau_{\bar{M}}(\sig\ast m)=\tau(m)$ for any  $m\in \bar M$.
Since $A_{\bF_v}$ is finite flat over $A$, it follows that $\bar M$ is finite projective over $A$.
Thus there is an $A$-module $P$ such that $\bar{M} \op P$ is finite free over $A$.
Let $x$ be an indeterminate and consider the zero map $0\colon P \ra P$.
Then the characteristic polynomial of $\tau\op 0$ on the free $A$-module $\bar M\op P$
\[
\det(\id-x\tau \mid \bar{M}):=\det(\id-x(\tau\op 0) \mid \bar{M}\op P) \in A[x]
\]
is independent of the choice of $P$; see \cite[Lemma-Definition 8.1.1]{BP09}.
Now the $d_v$-th iteration $\sig^{d_v}=\sig \circ \cdots \circ \sig$ is identity on $\bF_v$ and hence  ${\tau}^{d_v}\colon \bar{M}
\ra \bar{M}$ is $A_{\bF_v}$-linear.
By \cite[Lemma 8.1.4]{BP09}, we see that 
$
\det(\id-x\tau \mid \bar{M})=\det(\id-x^{d_v}\tau^{d_v} \mid \bar{M})
$, where $\bar M$ is regarded as an $A_{\bF_v}$-module in the right hand side. 
In particular, we have $\det(\id-x\tau \mid \bar{M}) \in 1+x^{d_v}A[x^{d_v}]$.
Here by \cite[\S6]{TW96}, we have
$
\det(\id-x^{d_v}\phi_v \mid T_\frp \ul{\bar{M}})=\det(\id-x^{d_v}\tau^{d_v} \mid \bar{M}).
$
Setting $X=x^{-d_v}$ and $r=\rk \ul{\bar M}$, we get
$
\det(X-\phi_v\mid T_\frp\ul{\bar{M}})=X^{r}\cdot\det(\id-x\tau \mid \bar{M})  \in A[X].
$

\end{proof}

\begin{proposition}\label{prop.charpolystrong}
Let $\ul M$  be an effective $A$-motive over $K$ with strongly semi-stable reduction at $v$ and $\frp$ is  a maximal ideal of $A$ with $v \nmid \frp$.
Then for $V=V_\frp\ul M$, 
 the characteristic polynomial 
\[
P_{\ul M,v}(X):=\det(X-\Frob_v \mid V_v^\ss)
\] has coefficients in $A$ which are independent of $\frp$.
\end{proposition}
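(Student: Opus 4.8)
The plan is to reduce the statement about the strongly semi-stable $A$-motive $\ul M$ to the good-reduction case already handled, via the semi-stable filtration. First I would pass to the analytification $\ul{\wt M}=\ul M\ot\wt A_{K_v}$, which by hypothesis admits a semi-stable filtration $0=\ul{\wt M}_0\subset\cdots\subset\ul{\wt M}_n=\ul{\wt M}$ by saturated $A(1)$-submotives whose graded pieces $\Gr_i\ul{\wt M}:=\ul{\wt M}_i/\ul{\wt M}_{i-1}$ have good reduction. Taking $\frp$-adic realizations is exact on short exact sequences of analytic $A(1)$-motives, so the filtration induces a $G_v$-stable filtration on $V=V_\frp\ul M=V_\frp\ul{\wt M}$ with graded pieces $V_\frp(\Gr_i\ul{\wt M})$. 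Hence, by multiplicativity of characteristic polynomials in short exact sequences (equivalently, by the Brauer--Nesbitt theorem, Theorem~\ref{thm.BN}) one has
\[
P_{\ul M,v}(X)=\det(X-\Frob_v\mid V_v^\ss)=\prod_{i=1}^{n}\det\bigl(X-\Frob_v\mid V_\frp(\Gr_i\ul{\wt M})\bigr).
\]
So it suffices to treat each factor separately, i.e.\ to show that each $\det(X-\Frob_v\mid V_\frp(\Gr_i\ul{\wt M}))$ has coefficients in $A$ independent of $\frp$.

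Next I would reduce each good-reduction factor to a statement over the residue field $\bF_v$. Since $\Gr_i\ul{\wt M}$ has good reduction, it has a good formal model $\ul{\wt\cM}_i$ over $\wt A_{\cO_v}$, and the reduction $\ul{\bar M}_i:=(\wt\cM_i\ot_{\wt A_{\cO_v}}A_{\bF_v},\tau\ot\id)$ is an effective $A$-motive over $\bF_v$ of the same rank. By the N\'eron--Ogg--Shafarevich criterion for analytic $A(1)$-motives, $V_\frp(\Gr_i\ul{\wt M})$ is unramified at $v$, and standard comparison of the generic and special fibres of a good model (as in the corresponding statement for effective $A$-motives, cf.\ the proposition citing \cite[Theorem~7.3]{Ga02}, and the identification $T_\frp\ul M|_{G_v}\cong T_\frp(\ul M\ot A_{K_v})$) identifies the $\Frob_v$-action on $V_\frp(\Gr_i\ul{\wt M})$ with the $\phi_v$-action on $V_\frp\ul{\bar M}_i=T_\frp\ul{\bar M}_i\ot_{A_\frp}Q_\frp$ dualized appropriately. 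Then Lemma~\ref{lem.charpoly} applies directly: $\det(X-\phi_v\mid T_\frp\ul{\bar M}_i)\in A[X]$ and is independent of $\frp$. Taking the dual exchanges $\Frob_v$ and $\phi_v$ up to passing to the reciprocal polynomial, which again has coefficients in $A$ independent of $\frp$ (here one uses that the leading/trailing coefficient, i.e.\ $\det$, is a unit times a power of a uniformizer and is $\frp$-independent, as in the proof of Lemma~\ref{lem.charpoly}). Combining all $n$ factors gives $P_{\ul M,v}(X)\in A[X]$ independent of $\frp$.

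The main obstacle I anticipate is bookkeeping rather than a deep difficulty: one must be careful that the semi-stable filtration a priori depends on $\frp$ (it is a filtration of the analytification at the place $v\nmid\frp$, so in fact it does \emph{not}), and that the graded pieces $V_\frp(\Gr_i\ul{\wt M})$, while unramified, need not individually come from effective $A$-motives over $K$ — they are only $A(1)$-motives over $K_v$ — so one cannot directly quote the global $\frp$-independence result, but must instead go through Lemma~\ref{lem.charpoly} over $\bF_v$ as above. A second minor point is the dualization: $V_\frp\ul M$ is the dual of $H^1_\frp(\ul M,Q_\frp)$, so one should phrase Lemma~\ref{lem.charpoly} on $T_\frp$ and then note that the characteristic polynomial of $\Frob_v$ on the dual is the reverse polynomial of that of $\phi_v$ (or $\phi_v\inv$), normalized by $\det$, which preserves the property of having $\frp$-independent coefficients in $A$. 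Once these points are addressed, the proof is a short assembly of Theorem~\ref{thm.BN}, the good-reduction theory of \S\ref{sec.local} and \S\ref{sec.analytic}, and Lemma~\ref{lem.charpoly}.
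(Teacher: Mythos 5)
Your proposal is correct and follows essentially the same route as the paper: factor through the semi-stable filtration of the analytification, observe that each graded piece has good reduction so its Tate module is unramified and matches the Tate module of the reduced $A$-motive over $\bF_v$, then invoke Lemma~\ref{lem.charpoly}. One small clarification: the ``dualization/reciprocal polynomial'' worry is a phantom, since Lemma~\ref{lem.charpoly} is already stated in terms of $T_\frp$, and the comparison $T_\frp(\Gr_i\ul{\wt M})\cong T_\frp(\ul{\wt\cM}_i\ot A_{\bF_v})$ matches $\Frob_v$ with $\phi_v$ directly (both sides are already the dual Tate module), so no passage to a reverse polynomial is needed.
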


\begin{proof}
Let $\ul{\wt M}:=\ul M\ot \wt A_{K_v}$ be the analytification of $\ul M$ and consider a semi-stable filtration 
\[
0=\ul{\wt M}_0\subset \ul{\wt M}_1 \subset \cdots \subset \ul{\wt M}_n=\ul{\wt M}
\]
of $\ul{\wt M}$.
Set $\Gr_i\ul{\wt M}:=\ul{\wt M}_i/\ul{\wt M}_{i-1}$ for each $1 \leq i \leq n$.
Since $V_v \simeq_\ss \bigoplus_{i=1}^n V_\frp(\Gr_i\ul{\wt M})$ and each $V_\frp(\Gr_i\ul{\wt M})$ is unramified at $v$, we obtain 
 \[
 P_{\ul M,v}(X)=\prod_{i=1}^n\det(X-\Frob_v\mid V_\frp(\Gr_i\ul{\wt M}))
 =\prod_{i=1}^n \det(X-\Frob_v \mid T_\frp(\Gr_i\ul{\wt M})).
 \]
Now let $\ul{\wt \cM}_i=(\wt \cM_i,\tau_i)$ be a good model of $\Gr_i\ul{\wt M}$ for each $i$.
Then the reductions $\ul{{\wt \cM}}_i\ot A_{\bF_v}=(\wt\cM_i\ot_{\wt A_{\cO_v}}A_{\bF_v},\tau_i\ot\id)$ are effective $A$-motives over $\bF_v$.
Since $T_\frp(\Gr_i\ul{\wt M})$ is unramified at $v$, it is isomorphic to  
$T_\frp (\ul{{\wt \cM}}_i\ot A_{\bF_v})$ as  representations of $G_v/I_v \cong G_{\bF_v}$.
Hence $\det(X-\Frob_v \mid T_\frp(\Gr_i\ul{\wt M}))=\det(X-\varphi_v\mid T_\frp (\ul{{\wt \cM}}_i\ot A_{\bF_v}))$ holds for any $i$ and so the claim is proved by Lemma \ref{lem.charpoly}. 
\end{proof}

\begin{definition}
Suppose that $\ul M$ has strongly semi-stable reduction at $v$.
Let $\alpha_1,\ldots,\alpha_{\rk \ul M} \in \bar Q$ be the roots of $P_{\ul M,v}(X)$ and define the rational numbers $w_i \in \bQ$ by
$|\alpha_i|_\infty=q_v^{w_i}$. 
We call $w_1,\ldots,w_{\rk \ul M}$  the {\it Weil weights of $\ul M$ at v}, and denote by $\W_v(\ul M):=\{w_1,\ldots,w_{\rk \ul M}\}$ the multi-set consisting of them.
Note that $\W_v(\ul M)$ is independent of the choice of extensions of $|\cdot|_\infty$ to $\bar Q$.
\end{definition}

\begin{example}
Let $\ul M$ be an effective pure $A$-motive over $K$ of rank $r$ and dimension $d$.
If $\ul M$ has good reduction at $v \nmid \frp$, then $P_{\ul M,v}(X)=\det(X-\Frob_v\mid T_\frp\ul{M})$ and so  Proposition \ref{prop.pureweight} implies
\[
\W_v(\ul M)=\left\{\frac{d}{r}, \ldots, \frac{d}{r}\right\},
\]
where the multiplicity of $\frac{d}{r}$ is $r$.
Then we particularly obtain $\Sigma\W_v(\ul M)=d$.
\end{example}

We can see that the equation  $\Sigma\W_v(\ul M)=d$ also holds in the  strongly semi-stable case:

\begin{proposition}\label{prop.freigen}
Let $\ul M$ be an effective $A$-motive over $K$ of rank $r$.
Suppose that $\ul M$ has strongly semi-stable reduction at a finite place $v$ of $K$.
We set $P_{\ul M,v}(X)=\sum_{i=0}^{r}a_iX^{r-i} \in A[X]$.
Then the following assertions hold.
\begin{itemize}
\item[$(1)$] $\Sigma\W_v(\ul M)=\dim \ul M$.
\item[$(2)$] Suppose that all Weil weights of $\ul M$ at $v$ are non-negative.
Then $|a_i|_\infty \leq q_v^{i\dim \ul M}$ for any $0 \leq i \leq r$. 
\end{itemize}
\end{proposition}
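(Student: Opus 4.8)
The plan is to deduce both statements from the fact that the determinant $A$-motive $\det\ul M=\wedge^{\rk\ul M}\ul M$ is pure of weight $\dim\ul M$ (Lemma \ref{lem.purerkone}), together with Vieta's formulas for the coefficients $a_i$. For (1), since $V_\frp$ is a tensor functor we have $\det(V_\frp\ul M)\cong V_\frp(\det\ul M)$, a one-dimensional representation of $G_v$ on which $\Frob_v$ acts by the scalar $\delta:=\det(\Frob_v\mid V_v^\ss)$; since the determinant is unchanged by semi-simplification and writing $P_{\ul M,v}(X)=\prod_{i=1}^{r}(X-\alpha_i)$ one gets $\delta=\prod_i\alpha_i$, hence $|\delta|_\infty=\prod_i q_v^{w_i}=q_v^{\Sigma\W_v(\ul M)}$. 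I then want $|\delta|_\infty=q_v^{\dim\ul M}$: this follows from Proposition \ref{prop.pureweight}(3) and $\Wt\det\ul M=\dim\ul M$ once we know $\det\ul M$ has good reduction at $v$. To obtain the latter I would use the semi-stable filtration $0=\ul{\wt M}_0\subset\cdots\subset\ul{\wt M}_n=\ul M\ot\wt A_{K_v}$: it gives $V_v\simeq_\ss\bigoplus_i V_\frp(\Gr_i\ul{\wt M})$ with each $\Gr_i\ul{\wt M}$ of good reduction, so each summand is unramified at $v$, hence so is $\det V_v=V_\frp(\det\ul M)|_{G_v}$; Gardeyn's N\'eron--Ogg--Shafarevich criterion for effective $A$-motives then yields good reduction of $\det\ul M$ at $v$ (choosing $\frp$ with $v\nmid\frp$, which is harmless because $P_{\ul M,v}$ has $A$-coefficients independent of $\frp$). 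Comparing the two expressions for $|\delta|_\infty$ and using $q_v>1$ gives $\Sigma\W_v(\ul M)=\dim\ul M$.

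For (2), Vieta's formulas give $a_i=(-1)^i e_i(\alpha_1,\dots,\alpha_r)$, the $i$-th elementary symmetric function up to sign, so by the non-archimedean triangle inequality $|a_i|_\infty\le\max_{\#S=i}\prod_{j\in S}|\alpha_j|_\infty$. Since every $w_j\ge 0$ by hypothesis and $\sum_{j=1}^r w_j=\dim\ul M$ by (1), any partial sum over a set $S$ with $\#S=i\ge 1$ satisfies $\sum_{j\in S}w_j\le\dim\ul M\le i\dim\ul M$, whence $\prod_{j\in S}|\alpha_j|_\infty=q_v^{\sum_{j\in S}w_j}\le q_v^{i\dim\ul M}$; the case $i=0$ is immediate since $a_0=1$. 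This gives the claimed bound.

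The only point genuinely requiring care is the good-reduction assertion for $\det\ul M$ at $v$: one must check that forming the top exterior power of the semi-stable filtration of $\ul M\ot\wt A_{K_v}$ is compatible with the functor $V_\frp$, so that unramifiedness of each $V_\frp(\Gr_i\ul{\wt M})$ really forces $V_\frp(\det\ul M)|_{G_v}$ to be unramified. Everything else is formal: the purity input is Lemma \ref{lem.purerkone}, the remainder of (1) uses only multiplicativity and the ultrametric property of $|\cdot|_\infty$, and (2) is elementary symmetric-function bookkeeping once (1) is in hand. I would accordingly devote most of the write-up to this determinant compatibility and to the application of the N\'eron--Ogg--Shafarevich criterion.
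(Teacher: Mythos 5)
Your argument is correct and follows the same route as the paper: pass to $\det\ul M$, use the tensor functoriality $\det(V_\frp\ul M)\cong V_\frp(\det\ul M)$ together with Lemma \ref{lem.purerkone} (rank-one motives are pure) and Proposition \ref{prop.pureweight}(3) to get $\Sigma\W_v(\ul M)=\Wt(\det\ul M)=\dim\ul M$, and then Vieta-style estimates for (2). Two small remarks. First, the concern you flag at the end is not a genuine issue: you do not need to propagate exterior powers through the semi-stable filtration. The paper's Corollary \ref{cor.unipotent} already says $I_v$ acts unipotently on $V_\frp\ul M$, and the determinant of a unipotent operator is $1$, so $\det(V_\frp\ul M)|_{G_v}=V_\frp(\det\ul M)|_{G_v}$ is unramified directly; Gardeyn's criterion then gives good reduction of $\det\ul M$ at $v$. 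The identification $\det(V_\frp\ul M)\cong V_\frp(\det\ul M)$ itself is just the asserted tensor functoriality of $V_\frp$ from \S3.2, which both your write-up and the paper take as given. Second, your bound in (2), namely $\prod_{j\in S}|\alpha_j|_\infty=q_v^{\sum_{j\in S}w_j}\le q_v^{\dim\ul M}$, is actually slightly sharper than the paper's (which bounds each $|\alpha_j|_\infty$ individually by $q_v^{\dim\ul M}$ and so only gets $q_v^{i\dim\ul M}$); both suffice for the stated inequality.
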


\begin{proof}
(1) Let $\frp$ be a maximal ideal of $A$ with $v\nmid \frp$.
As we see in Proposition \ref{prop.rkdim},  the determinant $\det \ul M$ of $\ul M$ is an effective $A$-motive over $K$ with $\rk \det \ul M=1$
and $\dim \det\ul M=\dim \ul M$.
Then $\det(V_\frp\ul M) \cong V_\frp(\det \ul M)$ as 
$\frp$-adic representations of $G_K$.
Since the $I_v$-action on $V_\frp \ul M$ is unipotent by Corollary \ref{cor.unipotent}, $V_\frp(\det \ul M)$ is unramified at $v$ and hence $\det \ul M$ has good reduction at $v$.
From this, we have $\W_v(\det \ul M)=\{\Sigma \W_v(\ul M)\}$ because the $\Frob_v$-eigenvalue of $V_\frp(\det \ul M)$ is the product of all roots of $P_{\ul M,v}(X)$.
Since   $\det \ul M$ is pure of weight equal to $\dim \ul M$ by Lemma \ref{lem.purerkone}, we get  $\dim \ul M=\Sigma \W_v(\ul M)$.

(2) Let $\alpha_1,\ldots, \alpha_r$ be  the roots of $P_{\ul M,v}(X)=\sum_{i=0}^ra_iX^{r-i}$.
Then obviously 
$a_0=1$.
For any $1 \leq i \leq r$, we have 
\[
a_i= \sum_{1 \leq s_1<\cdots < s_i \leq r}(-\alpha_{s_1})(-\alpha_{s_2})\cdots(-\alpha_{s_i}).
\]
Since all $\alpha_i$ satisfy $|\alpha_i|_\infty \geq 1$ by assumption, we have
$
|\alpha_i|_\infty \leq |a_r|_\infty=q_v^{\dim \ul M}.
$
Therefore we have
\[
|a_i|_\infty \leq \max_{1 \leq s_1<\cdots < s_i \leq r}\{|\alpha_{s_1}\cdots\alpha_{s_i}|_\infty\}\leq q_v^{i\dim \ul M}.
\]
\end{proof}

\subsection{Results}\label{ss.results}

To state our results, we recall the notation introduced in \S\S \ref{subsec.motivation}.
Let $r>0$ be a positive integer as usual.
The injective $A$-field structure $\gamma\col A\ra K$ extends to the field embedding $\gamma\col Q \ra K$ and then $K/\gamma(Q)$ is a finite extension. 
Let $K_{\rm s}$ denote the maximal separable extension of $\gamma(Q)$ in $K$, so that $K/K_{\rm s}$ is purely inseparable if $K\neq K_{\rm s}$.
Then we write $[K:Q]_{\rm i}:=[K:K_{\rm s}]$ for the inseparable degree of $K/\gamma(Q)$.
Denote by $\frd=\frd_{K_{\rm s}/\gamma(Q)}$ the relative discriminant of $K_{\rm s}/\gamma(Q)$.
Then we define 
\[
D_K=
\begin{cases}
\max\{d_{\frq} \mid \frq\subset A\ \mbox{is a maximal ideal dividing}\  \frd  \} & \mbox{if}\ \frd \neq A,\\
1 & \mbox{if}\ \frd=A.
\end{cases}
\]
Now it is known that purely inseparable extensions of global function fields have to be totally ramified at all places by \cite[Proposition 7.5]{Ros02}. Hence, for any finite place $u$ of $K$ lying above a maximal ideal $\frp$ of $A$ with $d_\frp>D_K$, the ramification index $e_{u\mid\frp}$ of $u\mid \frp$ is equal to $[K: Q]_{\rm i}$.

\begin{definition}\label{defi.mot}
For a non-negative $h \geq 0$ and two finite places $v, u$ of $K$, denote by 
\[
\mathsf{Mot}_{K,r,v}(u,h)
\]
 the set of effective $A$-motives $\ul M$ over $K$ of rank $r$ which satisfy the following conditions:
\begin{itemize}
\item $\ul M$ has strongly semi-stable reduction at both $v$ and $u$,
\item All Weil weights of $\ul M$ at $v$ are non-negative,
\item $\ul M$ is of height $\leq h$.
\end{itemize}
\end{definition}

Our first result is the following criterion for congruent $\frp$-adic representations arising from strongly semi-stable effective $A$-motives, which is a function field analog of the results on  $\ell$-adic representations due to Ozeki and Taguchi \cite{OT14}:

\begin{theorem}\label{thm.main1}
Let $r$ and $h$ be as above and fix a finite place $v$ of $K$.
For any maximal ideal $\frp$ of $A$ with $d_\frp>\max\{d_vr^2h, [K:Q]_{\rm i}h, D_K\}$
and any finite place $u$ of $K$ lying above $\frp$, the following holds: 
For any $\ul M \in \mathsf{Mot}_{K,r,v}(u,h)$ and $\ul M' \in \mathsf{Mot}_{K,r,v}\left(u,(q_\frp-2) [K:Q]_{\rm i}\inv\right)$, if both
\[
\begin{cases}
V_\frp\ul M|_{G_v} \equiv_\ss V_\frp\ul M'|_{G_v} \pmod \frp, & \mbox{and}\\
V_\frp\ul M|_{G_u} \equiv_\ss V_\frp\ul M'|_{G_u} \pmod \frp & 
\end{cases}
\]
hold, 
 then one has 
$V_\frp\ul M|_{G_v} \simeq_\ss V_\frp\ul M'|_{G_v}$, $\dim \ul M=\dim \ul M'$, and $\cW_v(\ul M)=\cW_v(\ul M')$.
\end{theorem}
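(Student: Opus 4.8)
The plan is to follow the function-field analogue of the Ozeki--Taguchi method: first use the congruence at $u$ together with the tame inertia weight formula (Theorem \ref{thm.tamemot}) to force $\dim\ul M=\dim\ul M'$, and then use the congruence at $v$ together with the Weil weight bound (Proposition \ref{prop.freigen}) and Lemma \ref{lem.absvalue} to force equality of the two Frobenius characteristic polynomials at $v$. As recorded in Section \ref{section.intro}, the hypotheses on $d_\frp$ ensure $v\nmid\frp$; moreover $d_\frp>D_K$ together with the remark preceding Definition \ref{defi.mot} gives $e_{u\mid\frp}=[K:Q]_{\rm i}$, which is the ramification index of $K_u/\gamma(Q_\frp)$.

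First I would work at the place $u$. Since $\ul M$ has height $\le h$ with $[K:Q]_{\rm i}h<q_\frp-1$ (immediate from $d_\frp>[K:Q]_{\rm i}h$), and $\ul M'$ has height $\le (q_\frp-2)[K:Q]_{\rm i}^{-1}$, so that $[K:Q]_{\rm i}$ times its height is $\le q_\frp-2<q_\frp-1$, Theorem \ref{thm.tamemot} applies to $\ul M\ot A_{K_u}$ and to $\ul M'\ot A_{K_u}$ with $e=[K:Q]_{\rm i}$, giving
\[
[K:Q]_{\rm i}\dim\ul M=\Sigma\TI(V_\frp\ul M|_{G_u}),\qquad [K:Q]_{\rm i}\dim\ul M'=\Sigma\TI(V_\frp\ul M'|_{G_u}).
\]
Because $\TI$ of a $\frp$-adic representation of $G_u$ depends only on the $I_u$-action on its semi-simplified residual representation, the hypothesis $V_\frp\ul M|_{G_u}\equiv_\ss V_\frp\ul M'|_{G_u}\pmod\frp$ yields $\TI(V_\frp\ul M|_{G_u})=\TI(V_\frp\ul M'|_{G_u})$ as multi-sets, hence $\dim\ul M=\dim\ul M'$. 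I write $d$ for this common value and record that $d=\Sigma\HP(\ul M)\le rh$ by Proposition \ref{prop.HPwt}.

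Next I would work at the place $v$. By Proposition \ref{prop.charpolystrong} one has $P_{\ul M,v}(X)=\sum_{i=0}^ra_iX^{r-i}$ and $P_{\ul M',v}(X)=\sum_{i=0}^ra_i'X^{r-i}$ in $A[X]$, both monic of degree $r$ (so $a_0=a_0'=1$), and by Remark \ref{rem.sscong} the congruence at $v$ forces $a_i\equiv a_i'\pmod\frp$ for every $i$. Proposition \ref{prop.freigen}\,(2)---applicable because both motives are strongly semi-stable at $v$ with non-negative Weil weights and $\dim\ul M=\dim\ul M'=d$---gives $|a_i|_\infty,|a_i'|_\infty\le q_v^{id}$, whence by the ultrametric inequality
\[
|a_i-a_i'|_\infty\le q_v^{id}=q^{d_v id}\le q^{d_vr^2h}<q^{d_\frp},
\]
using $i\le r$, $d\le rh$, and $d_\frp>d_vr^2h$. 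Lemma \ref{lem.absvalue} then yields $a_i=a_i'$ for all $i$, i.e. $P_{\ul M,v}(X)=P_{\ul M',v}(X)$. Finally, Corollary \ref{cor.unipotent} shows $I_v$ acts unipotently on $V_\frp\ul M|_{G_v}$ and on $V_\frp\ul M'|_{G_v}$ (this is where $v\nmid\frp$ enters), so Proposition \ref{prop.chp} upgrades this polynomial identity to $V_\frp\ul M|_{G_v}\simeq_\ss V_\frp\ul M'|_{G_v}$; the identity of characteristic polynomials also gives equality of the multi-sets of $\infty$-adic absolute values of their roots, i.e. $\cW_v(\ul M)=\cW_v(\ul M')$, and $\dim\ul M=\dim\ul M'$ has already been shown.

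The main obstacle, exactly as in \cite{OT14}, is the logical ordering. One cannot bound the coefficients of $P_{\ul M',v}$ directly from the congruence at $v$, because $\ul M'$ is allowed a height of size roughly $q_\frp$, far too large for Proposition \ref{prop.freigen}\,(2) to be of any use there. The point is that the tame inertia weight formula at $u$ needs only that $[K:Q]_{\rm i}$ times the height be $<q_\frp-1$, which that large height bound still satisfies; so the congruence at $u$ must be exploited first to force $\dim\ul M'=\dim\ul M\le rh$, after which the coefficient estimate at $v$ becomes available. Once this is seen, the only remaining work is to check that the three numerical conditions imposed on $d_\frp$ are precisely those needed to make Theorem \ref{thm.tamemot} fire at $u$ and Lemma \ref{lem.absvalue} fire at $v$, which is routine bookkeeping.
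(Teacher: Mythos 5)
Your proof is correct and follows the same strategy as the paper's: use the congruence at $u$ together with Theorem \ref{thm.tamemot} to pin down $\dim\ul M=\dim\ul M'$, then use Proposition \ref{prop.freigen} and Lemma \ref{lem.absvalue} to force $P_{\ul M,v}=P_{\ul M',v}$, and conclude via Corollary \ref{cor.unipotent} and Proposition \ref{prop.chp}. The only cosmetic difference is that the paper first records the coefficient bound with $\max\{\dim\ul M,\dim\ul M'\}$ before establishing the equality of dimensions, whereas you establish $\dim\ul M=\dim\ul M'$ up front, which is a slightly cleaner ordering of the same argument.
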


\begin{proof}
Let us first see what we immediately get from the assumption $d_\frp>\max\{d_vr^2h, [K:Q]_{\rm i}h, D_K\}$.
Since  particularly $d_v<d_\frp$, we see that $v$ is not lying above $\frp$.
For the maximum of Hodge-Pink weights $h'=\max \HP(\ul M')$ of $\ul M'$, it follows that $\ul M'$ is of height $\leq h'$ and $h' \leq (q_\frp-2)[K:Q]_{\rm i}\inv$.
As mentioned previously, we have $e_{u\mid \frp}=[K:Q]_{\rm i}$ by $d_\frp>D_K$.
Therefore we have $e_{u\mid\frp}h'<q_\frp-1$.
On the other hand, we have $e_{u\mid\frp}h< q_\frp-1$ as
\[
e_{u\mid\frp}h=[K:Q]_{\rm i}h<d_\frp<q^{d_\frp}-1=q_\frp-1.
\]
Note that the inequality $d_\frp<q^{d_\frp}-1$ follows as $d_\frp>D_K\geq 1$.

Now write $V:=V_\frp\ul M$ and $V':=V_\frp\ul M'$ for the associated $\frp$-adic representations of $G_K$.
Then $V_v \simeq_\ss V'_v$ holds if and only if $P_{\ul M,v}(X)=P_{\ul M',v}(X)$ by Proposition \ref{prop.chp}, and $\W_v(\ul M)=\W_v(\ul M')$ in this case. 
Let $P_{\ul M,v}(X)=\sum_{i=0}^ra_iX^{r-i}$ and $P_{\ul M',v}(X)=\sum_{i=0}^ra_i'X^{r-i}$, and set $c_i:=a_i-a'_i \in A$.
To see $P_{\ul M,v}(X)=P_{\ul M',v}(X)$, 
it suffices to check that $c_i=0$ for any $i$.
Since  $V_v \equiv_\ss V_v' \pmod \frp$, we obtain 
$P_{\ul M,v}(X)\equiv P_{\ul M',v}(X) \pmod\frp$.
Hence $c_i \equiv 0 \pmod\frp$ for any $i$. 
Applying  Proposition \ref{prop.freigen} (2) to $P_{\ul M,v}(X)$ and $P_{\ul M',v}(X)$, we get the estimate 
\[
|c_i|_\infty \leq \max\{|a_i|_\infty, |a'_i|_\infty\} \leq q_v^{i\max\{\dim \ul M,\dim\ul M'\}} \leq q^{d_vr\max\{\dim \ul M,\dim\ul M'\}}
\]
 for any $0 \leq i \leq r$.
 On the other hand, the assumption $V_u \equiv_\ss V'_u \pmod\frp$ implies 
 $\TI(V_u)=\TI(V_u')$ as multi-sets. 
Since we have  $e_{u\mid\frp}h < q_\frp-1$ and $e_{u\mid\frp}h' < q_\frp-1$, Theorem \ref{thm.tamemot} provides the equation 
\[
e_{u\mid\frp}\cdot\dim \ul M=\Sigma \TI(V_u)=\Sigma \TI(V'_u)=e_{u\mid\frp}\cdot\dim \ul M'.
\]
Thus $\ul M$ and $\ul M'$ have the same dimension.
Since $\ul M$ is of height $\leq h$ and the multi-set $\HP(\ul M)$ consists of $r$ integers, it follows by Proposition \ref{prop.HPwt} that 
$\dim \ul M=\Sigma\HP(\ul M) \leq rh$.
Consequently, we obtain 
\[
|c_i|_\infty \leq q^{d_vr^2h}<q^{d_\frp}
\]
and so Lemma \ref{lem.absvalue} implies $c_i=0$ for any $i$
.
Thus we get $V_v \simeq_\ss V'_v$ and $\W_v(\ul M)=\W_v(\ul M')$.
\end{proof}

For a maximal ideal $\frp$ of $A$, we consider the union 
$
\mathsf{Mot}_{K,r,v}(\frp,h):=\coprod_{u\mid \frp} \mathsf{Mot}_{K,r,v}(u,h),
$
where $u$ runs through all places of $K$ lying above $\frp$.
As a corollary of Theorem \ref{thm.main1}, we have:

\begin{corollary}
Let $r,h,$ and $v$ be as above.
Fix an effective $A$-motive $\ul M'$ over $K$ of rank $r$ such that $\ul M'$
has strongly semi-stable reduction at $v$ and all Weil weights at $v$ are non-negative.
Then there is a constant $C=C(K,r,h,v, \ul M')>0$ such that if a maximal ideal $\frp$ of $A$ satisfies $d_\frp>C$, then there exist no effective $A$-motives 
$\ul M \in \mathsf{Mot}_{K,r,v}(\frp, h)$ satisfying $\W(\ul M) \neq \W(\ul M')$ and $V_\frp\ul M \equiv_\ss V_\frp\ul M' \pmod \frp$ as representations of $G_K$.
\end{corollary}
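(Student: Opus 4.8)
The plan is to obtain this as a direct consequence of Theorem \ref{thm.main1}, using the fixed motive $\ul M'$ as the second argument, after arranging that $\ul M'$ itself belongs to the class required by that theorem once $\frp$ is large. First I would set $h_0:=\max\HP(\ul M')\geq 0$; by Proposition \ref{prop.HPwt} the motive $\ul M'$ is then of height $\leq h_0$. Since every effective $A$-motive has good reduction at all but finitely many finite places of $K$, there is a finite set $S$ of maximal ideals of $A$ such that for every $\frp\notin S$ and every place $u$ of $K$ above $\frp$ the motive $\ul M'$ has good --- hence strongly semi-stable --- reduction at $u$. Combined with the standing hypotheses on $\ul M'$ at $v$, this shows $\ul M'\in\mathsf{Mot}_{K,r,v}(u,h_0)$ for all $u\mid\frp$ whenever $\frp\notin S$.

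Next I would choose the constant $C$ larger than each of $d_vr^2h$, $[K:Q]_{\rm i}h$, $D_K$, $d_v$ and $\max_{\frq\in S}d_\frq$, and also large enough that $d_\frp>C$ forces $q^{d_\frp}\geq [K:Q]_{\rm i}h_0+2$, equivalently $h_0\leq(q_\frp-2)[K:Q]_{\rm i}\inv$; such a $C$ exists and depends only on $K,r,h,v$ and $\ul M'$. Then for any maximal ideal $\frp$ with $d_\frp>C$ and any place $u\mid\frp$ we have $v\nmid\frp$, the numerical constraint of Theorem \ref{thm.main1} holds, $\frp\notin S$, and --- since a motive of height $\leq h_0$ is a fortiori of height $\leq(q_\frp-2)[K:Q]_{\rm i}\inv$ --- $\ul M'\in\mathsf{Mot}_{K,r,v}\!\left(u,(q_\frp-2)[K:Q]_{\rm i}\inv\right)$, which is exactly the class allowed for the second motive in Theorem \ref{thm.main1}.

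Finally I would argue by contradiction. Suppose some $\ul M\in\mathsf{Mot}_{K,r,v}(\frp,h)$ satisfies $\W_v(\ul M)\neq\W_v(\ul M')$ and $V_\frp\ul M\equiv_\ss V_\frp\ul M'\pmod\frp$ as representations of $G_K$. By the definition $\mathsf{Mot}_{K,r,v}(\frp,h)=\coprod_{u\mid\frp}\mathsf{Mot}_{K,r,v}(u,h)$ we have $\ul M\in\mathsf{Mot}_{K,r,v}(u,h)$ for some $u\mid\frp$. Choosing a $G_K$-stable lattice, restricting it to $G_v$ and to $G_u$, and semi-simplifying, the $G_K$-congruence descends to $V_\frp\ul M|_{G_v}\equiv_\ss V_\frp\ul M'|_{G_v}\pmod\frp$ and $V_\frp\ul M|_{G_u}\equiv_\ss V_\frp\ul M'|_{G_u}\pmod\frp$; the point is that the semi-simplification of the restriction of a $G_K$-module to a subgroup equals the restriction of its semi-simplification up to a further semi-simplification, so that $\equiv_\ss\pmod\frp$ is preserved under restriction. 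Theorem \ref{thm.main1}, applied to $\ul M\in\mathsf{Mot}_{K,r,v}(u,h)$ and $\ul M'\in\mathsf{Mot}_{K,r,v}(u,(q_\frp-2)[K:Q]_{\rm i}\inv)$, then yields $\W_v(\ul M)=\W_v(\ul M')$, a contradiction, and hence no such $\ul M$ exists. There is no deep step: the only things needing genuine care are bookkeeping $C$ so that it simultaneously dominates the degrees of the finitely many primes of bad reduction of $\ul M'$ and makes the admissible height window $(q_\frp-2)[K:Q]_{\rm i}\inv$ exceed $h_0$, and the descent of the mod-$\frp$ congruence from $G_K$ to the decomposition subgroups just mentioned; the latter is the subtlest point and is where I would be most careful, though it is elementary.
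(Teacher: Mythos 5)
Your proof is correct and follows essentially the same route as the paper's: choose $C$ large enough (depending on $\ul M'$) that for $d_\frp>C$ the bound of Theorem \ref{thm.main1} holds, $\ul M'$ has good reduction at every $u\mid\frp$, and $\ul M'$ is of height $\leq (q_\frp-2)[K:Q]_{\rm i}\inv$, then apply Theorem \ref{thm.main1}. The only difference is that you spell out the descent of the mod-$\frp$ congruence from $G_K$ to the decomposition subgroups (which the paper leaves implicit, and which is indeed fine since a $G_K$-stable lattice is $G_v$-stable and semi-simplification commutes with restriction up to a further semi-simplification).
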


\begin{proof}
Since $\ul M'$ has good reduction at all but finitely many places of $K$, we may choose a constant $C >\max\{d_vr^2h, [K:Q]_{\rm i}h, D_K\}$ so large with respect to $\ul M'$
 such  that if $d_\frp>C$, then $\ul M'$ is of height $\leq (q_\frp-2)\cdot[K:Q]_{\rm i}\inv$ and has good reduction at all places lying above $\frp$.
Hence if $\ul M \in \mathsf{Mot}_{K,r,v}(\frp,h)$ satisfies $V_\frp\ul M \equiv_\ss V_\frp\ul M' \pmod \frp$, then 
$\W_v(\ul M)=\W_v(\ul M')$ must hold by
Theorem \ref{thm.main1}.
\end{proof}

Our second result is motivated by the non-existence conjecture on abelian varieties with constrained torsion due to Rasmussen and Tamagawa \cite{RT08}: 

\begin{theorem}\label{thm.main2}
Let $r \geq 2$ be an integer and $h\geq 0$.
Let $v$ be a finite place of $K$.
For any maximal ideal $\frp$  of $A$ with $d_\frp>\max\{d_vr^2h[K:Q]_{\rm i}, D_K\}$ and any finite place $u$ of $K$ lying above $\frp$, there 
exist no effective $A$-motives $\ul M \in \mathsf{Mot}_{K,r,v}(u,h)$ satisfying the following conditions:
\begin{itemize}
\item There exist  at least one Weil weight $w \in \W_v(\ul M)$ such   that $[K:Q]_{\rm i}w$ is a non-integer,
\item There exist one-dimensional rank-one effective $A$-motives $\ul M_1,\ldots,\ul M_r$ over $K$ having good reduction at both
$v$ and $u$ such that 
\[
\begin{cases}
\DS V_\frp\ul M|_{G_v} \equiv_\ss \bigoplus_{i=1}^rV_\frp(
\ul{M}_i^{\ot m_i})|_{G_v} \pmod \frp
, & \mbox{and}\\
\DS V_\frp\ul M|_{G_u} \equiv_\ss \bigoplus_{i=1}^rV_\frp(
\ul{M}_i^{\ot m_i})|_{G_u} \pmod \frp &
\end{cases}
\]
hold for some non-negative integers $m_1,\ldots,m_r$.
\end{itemize}
\end{theorem}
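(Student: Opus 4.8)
The plan is to apply Theorem~\ref{thm.main1} with the comparison object $\ul M':=\bigoplus_{i=1}^r\ul M_i^{\ot m_i}$, which is an effective $A$-motive over $K$ of rank $r$ having good reduction (hence strongly semi-stable reduction) at $v$ and $u$. If $h=0$ there is nothing to prove: then $\ul M$ has height $\le 0$, so $\dim\ul M=0$ and, by Proposition~\ref{prop.freigen}~(1), $\Sigma\W_v(\ul M)=0$ with all Weil weights non-negative, so $\W_v(\ul M)=\{0,\dots,0\}$ and the first condition fails. Thus I assume $h\ge1$, so that $[K:Q]_{\rm i}\le d_\frp$; since $d_\frp>D_K$ we have $e_{u\mid\frp}=[K:Q]_{\rm i}$ and $v\nmid\frp$.

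First I compute $\W_v(\ul M')$. By Lemma~\ref{lem.purerkone} each rank-one $\ul M_i$ is pure, and being one-dimensional it has weight $1$ by Proposition~\ref{prop.pureweight}~(1); hence $\ul M_i^{\ot m_i}$ is pure of weight $m_i$ by Proposition~\ref{prop.pureweight}~(2), and since it has good reduction at $v$, Proposition~\ref{prop.pureweight}~(3) gives $\W_v(\ul M_i^{\ot m_i})=\{m_i\}$. As Frobenius characteristic polynomials are multiplicative in direct sums, $\W_v(\ul M')=\{m_1,\dots,m_r\}$, a multi-set of non-negative integers; in particular all Weil weights of $\ul M'$ at $v$ are non-negative.

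The crux is to bound the $m_i$ so that $\ul M'$ lies in $\mathsf{Mot}_{K,r,v}(u,(q_\frp-2)[K:Q]_{\rm i}\inv)$. Since $V_\frp\ul M_i$ is one-dimensional, its reduction $T_\frp\ul M_i/\frp T_\frp\ul M_i$ is a character $\bar\chi_i\colon G_K\to\bF_\frp\mal$, and $V_\frp(\ul M_i^{\ot m_i})\equiv\bar\chi_i^{m_i}\pmod\frp$ as representations of $G_K$; by Proposition~\ref{prop.localan} and Example~\ref{ex.rankone}, $\bar\chi_i|_{I_u}=\theta_1^{[K:Q]_{\rm i}}$ (as $\ul M_i$ has good reduction at $u$ and $\dim\ul M_i=1$). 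I would then invoke the fact that, for such a one-dimensional rank-one $A$-motive, the order of $\bar\chi_i$ equals the order $(q_\frp-1)/\gcd([K:Q]_{\rm i},q_\frp-1)$ of its restriction to $I_u$ --- the function-field analogue of the mod-$\ell$ cyclotomic character having order $\ell-1$, corrected here by the inseparable degree, which is exactly the ramification index $e_{u\mid\frp}$. Granting this, $\bar\chi_i^{m_i}$ depends only on the residue of $[K:Q]_{\rm i}m_i$ modulo $q_\frp-1$, so, without altering either congruence or the good reduction of $\ul M_i^{\ot m_i}$ at $v$ and $u$, I may assume $0\le[K:Q]_{\rm i}m_i\le q_\frp-2$. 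Now Theorem~\ref{thm.tamemot} applies to $\ul M$ (strongly semi-stable at $u$, of height $\le h$, and $e_{u\mid\frp}h=[K:Q]_{\rm i}h<d_\frp<q_\frp-1$), giving $\TI(V_\frp\ul M|_{G_u})\subseteq[0,[K:Q]_{\rm i}h]$; by the $u$-congruence this multi-set equals $\coprod_i\TI(V_\frp(\ul M_i^{\ot m_i})|_{G_u})$, whose $i$-th term is $\{[K:Q]_{\rm i}m_i\bmod(q_\frp-1)\}=\{[K:Q]_{\rm i}m_i\}$. Hence $[K:Q]_{\rm i}m_i\le[K:Q]_{\rm i}h$, i.e.\ $m_i\le h$ for all $i$, so $\ul M'$ has height $\le h\le(q_\frp-2)[K:Q]_{\rm i}\inv$ and lies in $\mathsf{Mot}_{K,r,v}(u,(q_\frp-2)[K:Q]_{\rm i}\inv)$.

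Finally, since $d_\frp>D_K$ and $d_\frp>d_vr^2h[K:Q]_{\rm i}\ge\max\{d_vr^2h,[K:Q]_{\rm i}h\}$, Theorem~\ref{thm.main1} applies to $\ul M\in\mathsf{Mot}_{K,r,v}(u,h)$, $\ul M'\in\mathsf{Mot}_{K,r,v}(u,(q_\frp-2)[K:Q]_{\rm i}\inv)$ and the two congruences, and yields $\W_v(\ul M)=\W_v(\ul M')=\{m_1,\dots,m_r\}$. Then every $w\in\W_v(\ul M)$ is an integer, so $[K:Q]_{\rm i}w$ is an integer, contradicting the first displayed condition; hence no such $\ul M$ exists. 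I expect the one genuinely delicate point to be the determination of the order of $\bar\chi_i$ used in the third paragraph; everything else is a bookkeeping combination of Theorems~\ref{thm.main1}, \ref{thm.tamemot} and the purity statements of \S3.
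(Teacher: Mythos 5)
Your overall strategy — reduce to Theorem~\ref{thm.main1} by bounding the $m_i$ — fails when $[K:Q]_{\rm i}>1$, and the paper has to do something genuinely different to handle the inseparable degree. The critical gap is the sentence ``I may assume $0\le[K:Q]_{\rm i}m_i\le q_\frp-2$.'' It is true that $\bar\chi_i^{m_i}$ depends only on $m_i$ modulo $q_\frp-1$ (the target group $\bF_\frp\mal$ has order $q_\frp-1$, and $\gcd([K:Q]_{\rm i},q_\frp-1)=1$ since $[K:Q]_{\rm i}$ is a $p$-power — you did not need the circuitous and, in general, unjustified assertion about the order of $\bar\chi_i$ equalling the order of its restriction to $I_u$). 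But replacing $m_i$ by a representative $m_i'\in[0,q_\frp-2]$ only gives $[K:Q]_{\rm i}m_i'\le[K:Q]_{\rm i}(q_\frp-2)$, not $[K:Q]_{\rm i}m_i'\le q_\frp-2$; and forcing $[K:Q]_{\rm i}m_i'$ to equal the tame inertia weight $n_i:=[K:Q]_{\rm i}m_i\bmod(q_\frp-1)\in[0,q_\frp-2]$ is impossible unless $[K:Q]_{\rm i}\mid n_i$, which has no reason to hold (e.g.\ $[K:Q]_{\rm i}=p$, $m_i=p^{d_\frp-1}$ with $q=p$ gives $n_i=1$). Consequently you cannot infer $m_i\le h$ from the tame-inertia bound $n_i\le[K:Q]_{\rm i}h$, and $\ul M'=\bigoplus_i\ul M_i^{\ot m_i}$ need not lie in $\mathsf{Mot}_{K,r,v}\bigl(u,(q_\frp-2)[K:Q]_{\rm i}\inv\bigr)$; your application of Theorem~\ref{thm.main1} is therefore unjustified. (Your reduction to $h\ge1$ and the $h=0$ base case, and the identification $\W_v(\ul M')=\{m_1,\dots,m_r\}$ via purity, are fine.)

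The paper's proof deals with exactly this obstruction by changing the comparison object: it sets $\ul M':=\bigoplus_{i=1}^r\ul M_i^{\ot n_i}$ with exponents the tame inertia weights $n_i\in[0,q_\frp-2]$, and then compares $P_{\ul M,v}^{[K:Q]_{\rm i}}(X):=\det\bigl(X-\Frob_v^{[K:Q]_{\rm i}}\mid V_v^\ss\bigr)$ with $P_{\ul M',v}(X)$ rather than invoking Theorem~\ref{thm.main1} at all. Raising $\Frob_v$ to the $[K:Q]_{\rm i}$-th power is what makes the congruences line up: $\bar\chi_i(\Frob_v^{[K:Q]_{\rm i}})^{m_i}=\bar\chi_i(\Frob_v)^{[K:Q]_{\rm i}m_i}=\bar\chi_i(\Frob_v)^{n_i}$, so $P_{\ul M,v}^{[K:Q]_{\rm i}}(X)\equiv P_{\ul M',v}(X)\pmod\frp$. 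Then $n_i\le[K:Q]_{\rm i}h$ bounds $\dim\ul M'\le rh[K:Q]_{\rm i}$, and a direct estimate of both sets of coefficients by $q^{d_vr^2h[K:Q]_{\rm i}}$ (using Proposition~\ref{prop.freigen}) together with Lemma~\ref{lem.absvalue} yields equality of the polynomials; the contradiction is then that every $[K:Q]_{\rm i}w$ ($w\in\W_v(\ul M)$) lies in $\W_v(\ul M')=\{n_1,\dots,n_r\}\subset\bZ$. To repair your argument you would need to replace the direct appeal to Theorem~\ref{thm.main1} by this Frobenius-power comparison; as written, your step from ``$n_i\le[K:Q]_{\rm i}h$'' to ``$m_i\le h$'' is false in general.
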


\begin{remark}
If $r=1$, then $\W_v(\ul M)=\{\dim \ul M\}$ and so the first condition of the theorem never holds.
Hence it suffices to consider only the case $r \geq 2$. 
\end{remark}

\begin{proof}
Set $V:=V_\frp\ul M$ and $V':=\bigoplus_{i=1}^rV_\frp(
\ul{M}_i^{\ot m_i})$, so that $V'$ is semi-simple.
Let $\chi_i:G_K\ra \bF_\frp\mal$ be the character determined by $T_\frp\ul M_i\ot_{A_\frp}\bF_\frp$ for each $i$.
Since $\chi_i|_{G_u}$ is coming from a rank-one effective local shtuka of dimension one, it follows by Example \ref{ex.rankone} that $\chi_i|_{I_u}=\theta_1^{e_{u\mid \frp}}$, where $\theta_1\col I_u \ra \bF_\frp\mal$ is the level-one fundamental character. 
Now we have $e_{u\mid\frp}=[K:Q]_{\rm i}$ by  $d_\frp>D_K$ and therefore the $I_u$-action on $T_\frp\ul M_i^{\ot m_i}\ot_{A_\frp} \bF_\frp$ is given by $\theta_1^{[K:Q]_{\rm i}m_i}$.
For each $i$, let $n_i$ be the integer satisfying $\chi_i^{m_i}|_{I_u}=\theta_1^{n_i}$ and  $0 \leq n_i \leq q_\frp-2$.
Then we have $\TI(V'_u)=\{n_1,\cdots,n_r\}$ as multi-sets and  
 $[K:Q]_{\rm i}m_i \equiv n_i \pmod {q_\frp-1}$ for any $i$.
Since $V_u \equiv_\ss V_u' \pmod \frp$ by assumption, we have $\TI(V_u)=\TI(V_u')$.
Applying Theorem \ref{thm.tamemot} to $\ul M$, we  have $\TI(V_u) \subset [0,e_{u\mid\frp}h]$ and so
all $n_i$ satisfy $0 \leq n_i \leq h[K:Q]_{\rm i}$.

Now let us consider the $[K:Q]_{\rm i}$-power $\Frob_v^{[K:Q]_{\rm i}}$ of $\Frob_v$.
Then the characteristic polynomials of it  on $V_v^\ss$ and $V_v'$ have coefficients in $A$.  
We set $P_{\ul M,v}^{\SSC [K:Q]_{\rm i}}(X):=\det(X-\Frob_v^{[K:Q]_{\rm i}} \mid V_v^\ss)$.
Define $\ul M':=\bigoplus_{i=1}^r\ul M_i^{\ot n_i}$ (Recall that $\ul M_i^{\ot n_i}=\ul{A_K}$ if $n_i=0$).
Then  by $V_v \equiv_\ss V_v' \pmod \frp$ and $[K:Q]_{\rm i}m_i \equiv n_i \pmod {q_\frp-1}$, we obtain 
\begin{align*}
P_{\ul M,v}^{\SSC [K:Q]_{\rm i}}(X)=\det(X-\Frob_v^{[K:Q]_{\rm i}} \mid V_v^\ss) & \equiv \det(X-\Frob_v^{[K:Q]_{\rm i}}\mid V_v')  \\
&\equiv \prod_{i=1}^r(X-\chi_i(\Frob_v^{[K:Q]_{\rm i}})^{m_i}) \\
&\equiv \prod_{i=1}^r(X-\chi_i(\Frob_v)^{n_i}) \\
& \equiv P_{\ul M',v}(X) & \pmod \frp.
\end{align*} 
Since each $\ul M_i$ is pure of weight one by Lemma \ref{lem.purerkone}, the multi-set of Weil weights at $v$ of $\ul M'$ is $\W_v(\ul M')=\{n_1,\ldots,n_r\}=\TI(V'_u)$. 
By $0 \leq n_i \leq h[K:Q]_{\rm i}$, we have $\dim \ul M'=\Sigma \W_v(\ul M') \leq rh[K:Q]_{\rm i}$.
If we write $P_{\ul M',v}(X)=\sum_{i=0}^ra_i'X^{r-i} \in A[X]$, then it follows by Proposition \ref{prop.freigen} that 
\[
|a_i'|_\infty \leq q_v^{r\dim \ul M'} \leq q^{d_vr^2h[K:Q]_{\rm i}}
\]
for each $0 \leq i \leq r$.
On the other hand, 
since any root $\beta_i \in \bar Q$ of $P_{\ul M,v}^{\SSC [K:Q]_{\rm i}}(X)$ is a $[K:Q]_{\rm i}$-power of  a root of $P_{\ul M,v}(X)$, we have $|\beta_i|_\infty=q_v^{w_i[K:Q]_{\rm i}}$ for some $w_i \in \W_v(\ul M)$.
Thus if   
we write $P_{\ul M,v}^{\SSC [K:Q]_{\rm i}}(X)=\sum_{i=0}^rb_iX^{r-i} \in A[X]$, then 
\[
|b_i|_\infty \leq |b_r|_\infty=|\beta_1\cdots\beta_r|_\infty =q_v^{r\dim \ul M [K:Q]_{\rm i}} \leq q^{d_vr^2h[K:Q]_{\rm i}}
\] 
for each $0 \leq i \leq r$.
Hence by $d_\frp>d_vr^2h[K:Q]_{\rm i}$, Lemma \ref{lem.absvalue} implies that $P_{\ul M,v}^{\SSC [K:Q]_{\rm i}}(X)=P_{\ul M',v}(X)$.
From this, we see that all $w \in W_v(\ul M)$ satisfy $[K:Q]_iw \in \W_v(\ul M')$.
However, the multi-set $\W_v(\ul M')=\{n_1,\ldots,n_r\}$ contains only integers, which contradicts the assumption.
\end{proof}

Specializing Theorem \ref{thm.main2}  to  effective pure $A$-motives, we immediately obtain the next corollary:

\begin{corollary}
Let $r\geq 2$ be an integer and $\frq$ a maximal ideal of $A$.
Let $d>0$ be a positive integer such that  $d[K:Q]_{\rm i}$ is not divisible by $r$.
If a maximal ideal $\frp$ of $A$ satisfies $d_\frp>\max\{d_\frq r^2d[K:Q]\cdot[K:Q]_{\rm i}, D_K\}$, then there exist no $d$-dimensional effective pure $A$-motives $\ul M$ over $K$ of rank $r$ such that 
\begin{itemize}
\item $\ul M$ has good reduction at a place of  $K$ lying above $\frq$,
\item $\ul M$ has strongly semi-stable reduction at a place of $K$ lying above $\frp$,
\item There exist one-dimensional rank-one effective $A$-motives $\ul M_1,\ldots,\ul M_r$ over $K$ with good reduction outside $\infty$, and non-negative integers $m_1,\ldots,m_r$ such that  
\[
V_\frp\ul M \equiv_\ss \bigoplus_{i=1}^rV_\frp(
\ul{M}_i^{\ot m_i}) \pmod \frp
\]
holds as $\frp$-adic representations of $G_K$.
\end{itemize} 
\end{corollary}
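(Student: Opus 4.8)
The plan is to derive this corollary directly from Theorem \ref{thm.main2}, by specializing the Weil weights to the pure case and checking that the numerical hypotheses propagate. Suppose for contradiction that a $d$-dimensional effective pure $A$-motive $\ul M$ over $K$ of rank $r$ satisfying the three listed conditions exists. I would first fix a finite place $v$ of $K$ lying above $\frq$ at which $\ul M$ has good reduction (this place exists by the first bullet) and a finite place $u$ of $K$ lying above $\frp$ at which $\ul M$ has strongly semi-stable reduction (by the second bullet). Since the hypothesis $d_\frp>d_\frq$ forces $\frp\neq\frq$, the place $v$ does not lie above $\frp$, so all the characteristic-polynomial considerations at $v$ make sense.

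Next I would record what purity buys us. By Proposition \ref{prop.pureweight}\,(1) we have $\Wt\ul M=(\dim\ul M)/(\rk\ul M)=d/r$, and good reduction of $\ul M$ at $v$ together with Proposition \ref{prop.pureweight}\,(3) shows that every root of $P_{\ul M,v}(X)$ has $\infty$-adic absolute value $q_v^{d/r}$; hence $\W_v(\ul M)=\{d/r,\ldots,d/r\}$ with multiplicity $r$. In particular all Weil weights of $\ul M$ at $v$ are non-negative, and $[K:Q]_{\rm i}\cdot(d/r)$ is a non-integer precisely because $r\nmid d[K:Q]_{\rm i}$, so the first bullet of Theorem \ref{thm.main2} holds. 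To see that $\ul M$ is of bounded height, note $\dim\ul M=\Sigma\HP(\ul M)=d$ by Proposition \ref{prop.HPwt}\,(1), and since $\HP(\ul M)$ consists of $r$ non-negative integers summing to $d$, each Hodge-Pink weight is at most $d$; thus $\ul M$ is of height $\leq d$ by Proposition \ref{prop.HPwt}\,(2). As good reduction implies strongly semi-stable reduction, both semi-stability conditions of Definition \ref{defi.mot} hold, so $\ul M\in\mathsf{Mot}_{K,r,v}(u,d)$, i.e.\ we may take $h=d$ in Theorem \ref{thm.main2}. For the second bullet of that theorem, the one-dimensional rank-one effective $A$-motives $\ul M_1,\ldots,\ul M_r$ of the corollary's third condition have good reduction outside $\infty$, hence good reduction at the finite places $v$ and $u$, and the given congruence $V_\frp\ul M\equiv_\ss\bigoplus_{i=1}^rV_\frp(\ul M_i^{\ot m_i})\pmod\frp$ of $G_K$-representations restricts to the required congruences over $G_v$ and over $G_u$.

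Finally I would check that the bound in the corollary implies the bound $d_\frp>\max\{d_vr^2h[K:Q]_{\rm i},D_K\}$ needed to invoke Theorem \ref{thm.main2} with $h=d$. This is the only delicate bookkeeping: since $v$ lies above $\frq$, its residue degree satisfies $f_{v\mid\frq}=d_v/d_\frq\leq e_{v\mid\frq}f_{v\mid\frq}\leq[K:Q]$, so $d_v\leq d_\frq[K:Q]$ and therefore $d_vr^2d[K:Q]_{\rm i}\leq d_\frq r^2d[K:Q][K:Q]_{\rm i}$; combined with $d_\frp>D_K$, the hypothesis $d_\frp>\max\{d_\frq r^2d[K:Q]\cdot[K:Q]_{\rm i},D_K\}$ gives exactly $d_\frp>\max\{d_vr^2d[K:Q]_{\rm i},D_K\}$. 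Thus all hypotheses of Theorem \ref{thm.main2} are satisfied, and that theorem asserts there is no such $\ul M\in\mathsf{Mot}_{K,r,v}(u,d)$ — contradicting the existence of our $\ul M$. The main (and really the only nontrivial) obstacle is this comparison of degrees and heights; once the inequalities $d_v\leq d_\frq[K:Q]$ and $\max\HP(\ul M)\leq d$ are in place, the statement is an immediate consequence of Theorem \ref{thm.main2}.
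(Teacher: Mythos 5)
Your proof is correct and follows essentially the same route as the paper: choose $v\mid\frq$ and $u\mid\frp$, observe that purity and good reduction at $v$ give $\W_v(\ul M)=\{d/r,\ldots,d/r\}$ so that $[K:Q]_{\mathrm i}\cdot(d/r)$ is a non-integer, check $\ul M\in\mathsf{Mot}_{K,r,v}(u,d)$, bound $d_v$ via the residue degree over $\frq$, and invoke Theorem \ref{thm.main2}. The only difference is cosmetic — you spell out the height bound via Proposition \ref{prop.HPwt} and correctly use $d_v\leq d_\frq[K:Q]$ (the paper writes a strict inequality there, which is a harmless overclaim since the corollary's hypothesis is already strict), but the argument is the same.
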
 

\begin{proof}
Let $\frp$ be as  $d_\frp>\max\{d_\frq r^2d[K:Q]\cdot[K:Q]_{\rm i}, D_K\}$ and assume that there is an $\ul M$ satisfying the conditions as above.
Let $v \nmid \frp$ and $u\mid \frp$ be finite places of $K$ such that $\ul M$ has good (resp.\ strongly semi-stable) reduction at $v$ (resp.\ at $u$).
Then we have $\ul M \in \mathsf{Mot}_{K,r,v}(u,d)$.
By $d_v=d_{\frq}[\bF_v:\bF_\frq]<d_\frq[K:Q]$, we have $d_vr^2d[K:Q]_{\rm i}<d_\frp$.
Since $\ul M$ is pure and has good reduction at $v$, the Weil weights at $v$ are $\W_v(\ul M)=\{\frac{d}{r}, \ldots, \frac{d}{r}\}$ and  $\frac{d}{r}[K:Q]_{\rm i}$ is a non-integer by assumption.
This contradicts the non-existence in Theorem \ref{thm.main2}.
\end{proof}

%
%
%

%
%
\appendix

\section{{Proof of Throrem \ref{thm.BN}}} \label{Appendix}

This Appendix based on \cite{Ya} aims to give a proof of  Theorem \ref{thm.BN}.
In fact, Theorem \ref{thm.BN} follows from a general statement (Proposition \ref{prop.BN} below) under the following setting.
From now on, let $\bsA$ be a (not necessarily commutative) ring with unity and consider a (left) $\bsA$-module $M$ .
(What we have in mind as $\bsA$ and $M$ are $\bsA=F[G]$ and $M=V^\ss$.) 
Denote by $\End(M)$ the ring of endomorphisms of $M$ as an abelian group, and by $\End_{\bsA}(M)$ its subring consisting of $\bsA$-endomorphisms.  
Then the {\it bicommutant} $\bsB_M$ of $M$ is defined as
\[
\bsB_M=\{f\in \End(M) \mid f\circ g=g\circ f\es \mbox{for any}\ g\in \End_{\bsA}(M)\}, 
\]
which is a subring of $\End(M)$.
For any $a\in \bsA$, we write $[a]_M$ for the {\it homothey} $[a]_M\colon M\ra M; x\mapsto ax$ and denote by $\bsA_M=\{[a]_M\mid a\in \bsA\}$ the ring of  homotheties.
Then $\bsA_M \subset \bsB_M$. 

\begin{theorem}[Density theorem]\label{thm.density}
Suppose that $M$ is semi-simple.
Then for any $b\in \bsB_M$ and any finite sequence $x_1,\ldots, x_n \in M$,
there exists an element $a\in \bsA$ such that $[a]_M(x_i)=b(x_i)$ for each $1\leq i \leq n$. 
\end{theorem}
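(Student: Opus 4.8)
The plan is to reduce everything to the cyclic case $n=1$ and then recover the general statement by a standard matrix trick over the module $M^{\oplus n}$.

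First I would treat the case $n=1$. Given $x\in M$, the cyclic submodule $\bsA x\subseteq M$ is a direct summand of $M$ because $M$ is semisimple, so there is an $\bsA$-linear projection $e\colon M\to M$ onto $\bsA x$; in particular $e\in\End_{\bsA}(M)$ and $e(x)=x$ (using that $\bsA$ has unity, so $x=1\cdot x\in\bsA x$). For any $b\in\bsB_M$ one then has $b(x)=b(e(x))=e(b(x))\in\bsA x$, so $b(x)=ax$ for some $a\in\bsA$, which is exactly $[a]_M(x)=b(x)$.

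Next I would pass from $M$ to $N:=M^{\oplus n}$, which is again a semisimple $\bsA$-module, and identify $\End_{\bsA}(N)$ with the matrix ring $\mathrm{M}_n\bigl(\End_{\bsA}(M)\bigr)$ acting on column vectors. The key point is that the diagonal endomorphism $b^{\oplus n}=(b,\dots,b)\colon N\to N$ belongs to the bicommutant $\bsB_N$: for a matrix $(g_{ij})$ with entries $g_{ij}\in\End_{\bsA}(M)$, the $(i,j)$-entries of $b^{\oplus n}\circ(g_{ij})$ and of $(g_{ij})\circ b^{\oplus n}$ are $b\circ g_{ij}$ and $g_{ij}\circ b$ respectively, and these coincide since $b\in\bsB_M$. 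Applying the case $n=1$ to the module $N$, the element $(x_1,\dots,x_n)\in N$, and the bicommutant element $b^{\oplus n}$, I obtain $a\in\bsA$ with $(ax_1,\dots,ax_n)=(b(x_1),\dots,b(x_n))$; reading off coordinates gives $[a]_M(x_i)=b(x_i)$ for every $i$, as required.

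The only place where semisimplicity is genuinely used is the splitting of $\bsA x$ as a direct summand in the $n=1$ step; everything else is formal bookkeeping. I expect the main thing needing care in the writeup to be the identification $\End_{\bsA}(M^{\oplus n})\cong\mathrm{M}_n(\End_{\bsA}(M))$ together with the verification that $b^{\oplus n}\in\bsB_{M^{\oplus n}}$ — this is the one spot where the argument really uses that $b$ commutes with \emph{all} $\bsA$-endomorphisms of $M$, not merely with the scalar ones.
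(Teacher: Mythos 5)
Your proof is correct and is exactly the standard Bourbaki argument that the paper cites (\cite[Chapitre 8, \S5, Th\'eor\`eme 3]{Bou81}): handle the cyclic case $n=1$ by splitting off $\bsA x$ as a direct summand and commuting $b$ past the projection, then pass to $M^{\oplus n}$ via the identification $\End_{\bsA}(M^{\oplus n})\cong\mathrm{M}_n(\End_{\bsA}(M))$ and the observation that the diagonal $b^{\oplus n}$ lies in the bicommutant. Since the paper itself only refers to Bourbaki for the proof, there is nothing to compare beyond noting that you have faithfully reconstructed it.
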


\begin{proof}
See \cite[Chapitre 8, \S5, Th\'eor\`eme 3]{Bou81} and its proof.
\end{proof}

\begin{corollary}\label{cor.bicom}
Let $M$ be a semi-simple $\bsA$-module.
If $M$ is finitely generated as a left $\End_{\bsA}(M)$-module, then $\bsA_M=\bsB_M$.
\end{corollary}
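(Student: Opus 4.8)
The plan is to reduce the statement directly to the density theorem (Theorem~\ref{thm.density}). The inclusion $\bsA_M \subseteq \bsB_M$ holds without any hypothesis, as already remarked, so the only content is the reverse inclusion $\bsB_M \subseteq \bsA_M$. First I would fix a finite generating set for $M$ over $\End_{\bsA}(M)$: by hypothesis there exist $x_1,\dots,x_n \in M$ with $M = \End_{\bsA}(M)\,x_1 + \cdots + \End_{\bsA}(M)\,x_n$. Now take an arbitrary $b \in \bsB_M$ and apply Theorem~\ref{thm.density} to $b$ and the tuple $(x_1,\dots,x_n)$: since $M$ is semi-simple, this yields an element $a \in \bsA$ such that $[a]_M(x_i) = b(x_i)$ for every $1 \le i \le n$.

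The remaining step is to upgrade this finite list of equalities to the identity $[a]_M = b$ as endomorphisms of the abelian group $M$; once that is done we conclude $b = [a]_M \in \bsA_M$, which is exactly what we want. For this, write an arbitrary $x \in M$ as $x = \sum_{i=1}^n g_i(x_i)$ with $g_i \in \End_{\bsA}(M)$. Both $b$ and $[a]_M$ belong to $\bsB_M$, hence by definition of the bicommutant they commute with every element of $\End_{\bsA}(M)$, so
\[
b(x) = \sum_{i=1}^n b\bigl(g_i(x_i)\bigr) = \sum_{i=1}^n g_i\bigl(b(x_i)\bigr) = \sum_{i=1}^n g_i\bigl([a]_M(x_i)\bigr) = \sum_{i=1}^n [a]_M\bigl(g_i(x_i)\bigr) = [a]_M(x).
\]
Since $x$ was arbitrary, $b = [a]_M$, completing the proof.

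This argument is entirely routine given Theorem~\ref{thm.density}, so there is no genuine obstacle; the one point deserving attention is simply that membership in $\bsB_M$ \emph{is} the commutation property with $\End_{\bsA}(M)$ that allows one to slide each $g_i$ past both $b$ and $[a]_M$ in the displayed chain of equalities, and that the finite-generation hypothesis is precisely what makes the finite output of the density theorem suffice to pin down $b$ on all of $M$.
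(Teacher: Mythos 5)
Your proof is correct and follows essentially the same route as the paper: apply the density theorem to a finite generating set of $M$ over $\End_{\bsA}(M)$ to produce $a$, then use commutation with the $g_i$ to propagate $[a]_M(x_i)=b(x_i)$ to all of $M$. The only cosmetic difference is that you start from $b(x)$ and slide $b$ and $[a]_M$ past the $g_i$, while the paper starts from $[a]_M(x)$ and slides $a$; the underlying commutation argument is identical.
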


\begin{proof}
We have already seen $\bsA_M \subset \bsB_M$.
To prove the converse, we take an element $b \in \bsB_M$ and a generating set $\{x_1,\ldots,x_n\}$ of $M$ as a left $\End_{\bsA}(M)$-module.
By Theorem \ref{thm.density}, there exists $[a]_M \in \bsA_M$ such that $b(x_i)=[a]_M(x_i)$ for each $i$.
Since any $x\in M$ is written by $x=\sum_{i=1}^ng_i(x_i)$ for $g_i\in \End_{\bsA}(M)$, it follows that 
\[
[a]_M(x)=a\sum_{i=1}^ng_i(x_i)=\sum_{i=1}^ng_i(ax_i)=\sum_{i=1}^ng_i(b(x_i))=\sum_{i=1}^nb\left(g_i(x_i)\right)=b(x).
\]
Thus $\bsB_M \subset \bsA_M$.
\end{proof}

\begin{corollary}\label{cor.3}
Let $M_1,\ldots,M_n$ be simple $\bsA$-modules which are not isomorphic to each other.
Suppose that each $M_i$ is finitely generated as a  left $\End_{\bsA}(M_i)$-module.  
For each $1 \leq i \leq n$, take $a_i \in \bsA_{M_i}$.
Then there is an $a\in \bsA$ such that $[a]_{M_i}=a_i$ for each $i$. 
\end{corollary}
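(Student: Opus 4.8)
The plan is to reduce the statement to Corollary \ref{cor.bicom} by passing to the direct sum $M := \bigoplus_{i=1}^n M_i$. First I would note that $M$ is semi-simple, being a finite direct sum of simple modules. Since the $M_i$ are simple and pairwise non-isomorphic, Schur's lemma gives $\Hom_{\bsA}(M_i,M_j)=0$ for $i\neq j$, so every $\bsA$-endomorphism of $M$ is ``block-diagonal'': $\End_{\bsA}(M)=\prod_{i=1}^n \End_{\bsA}(M_i)$, where $g=(g_1,\ldots,g_n)$ acts on the $i$-th summand via $g_i$. From this decomposition it follows that $M$ is finitely generated as a left $\End_{\bsA}(M)$-module: if $S_i\subset M_i$ generates $M_i$ over $\End_{\bsA}(M_i)$, then $\bigcup_i S_i$, viewed inside $M$, generates $M$ over $\End_{\bsA}(M)$, because each $\End_{\bsA}(M_i)$ embeds into $\End_{\bsA}(M)$ by extension by zero on the other summands.

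Next I would define $b\in\End(M)$ to be the additive endomorphism acting on the $i$-th summand $M_i$ as $a_i$; this makes sense since $a_i\in\bsA_{M_i}\subset\End(M_i)$ is additive. The key step is to check that $b$ lies in the bicommutant $\bsB_M$. Given $g=(g_1,\ldots,g_n)\in\End_{\bsA}(M)$ as above, the inclusion $\bsA_{M_i}\subset\bsB_{M_i}$ noted before Theorem \ref{thm.density} shows $a_i\circ g_i=g_i\circ a_i$ for every $i$; comparing the actions on each summand yields $b\circ g=g\circ b$, hence $b\in\bsB_M$.

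Finally, Corollary \ref{cor.bicom} applies to $M$ and gives $\bsA_M=\bsB_M$, so $b=[a]_M$ for some $a\in\bsA$. Restricting to the $i$-th summand, $[a]_{M_i}$ equals the action of $b$ on $M_i$, which is $a_i$, proving the claim. I do not anticipate a genuine obstacle: the only point requiring care is the identification $\End_{\bsA}(M)=\prod_i\End_{\bsA}(M_i)$ together with the induced embeddings of the factors — this is exactly where the pairwise non-isomorphism hypothesis is used — while the remainder is routine bookkeeping on direct summands.
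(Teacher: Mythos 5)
Your proof is correct and follows essentially the same route as the paper: both form $M=\bigoplus_i M_i$, invoke Corollary \ref{cor.bicom} to get $\bsA_M=\bsB_M$, and use the pairwise non-isomorphism (via Schur's lemma) to obtain $\End_{\bsA}(M)=\prod_i\End_{\bsA}(M_i)$ so that the diagonal endomorphism $(a_i)_i$ lands in $\bsB_M$. You merely spell out in more detail the two points the paper leaves implicit — why $M$ is finitely generated over $\End_{\bsA}(M)$ and why $(a_i)_i$ lies in the bicommutant.
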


\begin{proof}
Set  
$M:=\bigoplus_{i=1}^nM_i$.
Then $M$ is finitely generated as a left $\End_{\bsA}(M)$-module because so is each $M_i$ as a left $\End_{\bsA}(M_i)$-module.
Thus $\bsA_M=\bsB_M$ by Corollary \ref{cor.bicom}.
Here since $M_i$ and $M_j$ are not isomorphic if $i\neq j$, we see that $\End_{\bsA}(M)=\prod_{i=1}^n\End_{\bsA}(M_i)$.
Therefore the endomorphism $(a_i)_i\col M \ra M$ satisfies
$(a_i)_i \in \bsB_M=\bsA_M$. 
\end{proof}

Now let $F$ be a field and $\bsA$ an $F$-algebra.
For an $\bsA$-module $M$ which is  finite-dimensional over $F$, denote by 
$P_{M,a}(X)=\det(X-a\mid M)$ the characteristic polynomial of $a\in \bsA$.
Let $\dim_FM=r$ and set $P_{M,a}(X)=\sum_{i=0}^rc_iX^{r-i}$.
Then the coefficients of $P_{M,a}(X)$ are characterized by $c_i=(-1)^i\Tr_{\wedge^iM}(a) \in F.$
Then we have:

\begin{proposition}\label{prop.BN}
Let $M$ and $M'$ be semi-simple $\bsA$-modules and suppose they have the same finite dimension over $F$.
Set $r:=\dim_F M=\dim_F M'$.
Then the following assertions hold:
\begin{itemize}
\item[$(1)$] Suppose that $F$ has characteristic $p>0$. Then $M$  is isomorphic to $M'$  if and only if $P_{M,a}(X)=P_{M',a}(X)$ for all $a\in \bsA$.
\item[$(2)$] Suppose either $F$ has characteristic $0$ or has characteristic $p>0$ satisfying $r<p$. Then $M$ is isomorphic to $M'$ if and only if $\Tr_{M}(a)=\Tr_{M'}(a)$ for all $a\in \bsA$.
\end{itemize} 
\end{proposition}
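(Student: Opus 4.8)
The plan is to prove both equivalences, with the ``only if'' directions being immediate: characteristic polynomials are multiplicative in short exact sequences, so $P_{M,a}(X)$ depends only on the isomorphism class of the finite-dimensional $\bsA$-module $M$, and the same then holds for its coefficients, in particular for the trace $c_1=-\Tr_M(a)$. So all the work is in the ``if'' directions, and I would handle (1) and (2) in parallel. First I would reduce everything to a statement about multiplicities: since $M$ and $M'$ are semisimple and finite-dimensional over $F$, write $M\cong\bigoplus_{i=1}^{s}M_i^{\oplus n_i}$ and $M'\cong\bigoplus_{i=1}^{s}M_i^{\oplus n_i'}$, where $M_1,\dots,M_s$ are the pairwise non-isomorphic simple $\bsA$-modules occurring in $M$ or $M'$ and $n_i,n_i'\ge 0$. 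It then suffices to show $n_j=n_j'$ for each $j$, i.e.\ to isolate a single multiplicity inside the characteristic-polynomial (resp.\ trace) data.

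To apply Corollary~\ref{cor.3} I would first note that each $M_i$, being nonzero and finite-dimensional over $F$ and with $F$ embedding into the division ring $\End_\bsA(M_i)$ (scalars are central), satisfies $\dim_{\End_\bsA(M_i)}M_i\le\dim_F M_i<\infty$, hence is finitely generated as a left $\End_\bsA(M_i)$-module. Now fix $j$ and apply Corollary~\ref{cor.3} to the family $M_1,\dots,M_s$ with the homotheties $a_j:=[1]_{M_j}=\id_{M_j}\in\bsA_{M_j}$ and $a_i:=[0]_{M_i}=0$ for $i\ne j$: this produces $a\in\bsA$ acting as the identity on $M_j$ and as $0$ on every other $M_i$. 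Then on $M$ the element $a$ acts as the identity on the summand $M_j^{\oplus n_j}$ and as $0$ on its complement, so
\[
P_{M,a}(X)=(X-1)^{\,n_j\dim_F M_j}\,X^{\,r-n_j\dim_F M_j},
\]
and likewise $P_{M',a}(X)=(X-1)^{\,n_j'\dim_F M_j}\,X^{\,r-n_j'\dim_F M_j}$.

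For part (1), the hypothesis $P_{M,a}(X)=P_{M',a}(X)$ forces these two factorizations to coincide; since $X$ and $X-1$ are coprime and $\dim_F M_j\ge 1$, comparing the multiplicity of the root $0$ gives $n_j=n_j'$. For part (2), taking traces of the same $a$ gives $n_j\dim_F M_j=n_j'\dim_F M_j$ as elements of $F$; when $F$ has characteristic $0$ this is an equality of integers, and when $F$ has characteristic $p>r$ both sides are nonnegative integers $\le r<p$ (indeed $n_j\dim_F M_j=\dim_F M_j^{\oplus n_j}\le\dim_F M=r$), so it is again an equality of integers, whence $n_j=n_j'$ since $\dim_F M_j\ne 0$. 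Letting $j$ range over $1,\dots,s$ yields $M\cong M'$ in both cases.

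The only genuine obstacle is making Corollary~\ref{cor.3} (i.e.\ the density theorem) do precisely this separation: verifying its finite-generation hypothesis and, more to the point, choosing the homotheties $a_i$ so that the resulting $a\in\bsA$ has a characteristic polynomial (resp.\ trace) transparent enough to read off one multiplicity $n_j$. The numerical endgame is where the characteristic restriction in (2) is actually needed — an equality of traces in $F$ recovers multiplicities only when equality mod $p$ implies equality of integers, which is exactly the role of the bound $r<p$; part (1) needs no hypothesis on the characteristic of $F$ because there one compares root multiplicities of a polynomial rather than scalar values in $F$.
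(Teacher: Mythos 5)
Your proof is correct, and in fact your argument for part (1) is genuinely simpler than the paper's. Both you and the paper apply Corollary~\ref{cor.3} to produce an element $a\in\bsA$ acting as the identity on the isotypic component $M_j^{\oplus n_j}$ and as zero on all other isotypic components. But from there the methods diverge. The paper only extracts coefficient-level data from $P_{M,a}=P_{M',a}$, namely the traces $\Tr_{\bigwedge^k M}(a)$, and these deliver only the congruence $m_\eta\equiv m'_\eta\pmod p$. Obtaining even that congruence requires a case split on whether $p\mid\dim_F N_\eta$, a Lucas-type divisibility analysis of $\binom{p^\nu d}{s}$ for the exterior power $\bigwedge^{p^\nu}$, and afterward an induction peeling off $p$-th powers ($M=L_h\oplus(M_h)^{\oplus p^h}$, $M_h\to 0$). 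You instead observe that the chosen $a$ makes both characteristic polynomials split completely as $(X-1)^{n_j\dim_F M_j}\,X^{\,r-n_j\dim_F M_j}$; since $X$ and $X-1$ are non-associate irreducibles in the UFD $F[X]$, equality of polynomials forces equality of exponents as integers, giving $n_j=n_j'$ at once. This bypasses the exterior-power computation, the Lucas arithmetic, and the induction, and in fact works over a field of any characteristic (the paper only needs it for $p>0$ because (2) covers characteristic $0$). Your verification of the finite-generation hypothesis for Corollary~\ref{cor.3} via $F\hookrightarrow\End_\bsA(M_i)$ is also sound. Part (2) is handled the same way in both: the same $a$ gives $(n_j-n_j')\dim_F M_j=0$ in $F$, which in characteristic $0$ is immediate and in characteristic $p>r$ follows from the bound $n_j\dim_F M_j\le r<p$ exactly as you argue.
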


\begin{proof}
The ``only if'' parts are obvious, so we prove the ``if'' parts.
Let $\frS$ denote the category of isomorphism classes of simple $\bsA$-modules.
For an object $\mu$ of $\frS$, if $M$ has a simple factor belonging to $\mu$, we denote it by $M_\mu$ and write $m_\mu$ for its multiplicity. 
If $M$ has no simple factors belonging to $\mu$, then we define $M_\mu=\{0\}$ and $m_\mu=0$.
For $M'$, we also define $M'_\mu$ and $m_\mu'$ in the same way.
Then we can take a finite set $H$ of objects of $\frS$ such that 
\[
M=\bigoplus_{\mu \in H}(M_\mu)^{\op m_\mu},\es\es M'=\bigoplus_{\mu \in H}(M_\mu')^{\op m'_\mu},
\]
and either $m_\mu\neq 0$ or $m'_\mu \neq 0$ holds for any $\mu \in H$.
Thus, to prove $M\cong M'$, it is enough to show that $m_\mu=m'_\mu$ for all $\mu \in H$.

(1) Suppose that $P_{M,a}(X)=P_{M',a}(X)$ for all $a\in \bsA$.
For each $\mu \in H$, we fix a simple $\bsA$-module $N_\mu$ which  represents $\mu$.
(Notice that if $m_\mu\neq 0$ and $m'_\mu \neq 0$, then $M_\mu \cong N_\mu \cong M'_\mu$ and 
$P_{M_\mu,a}(X)=P_{N_\mu,a}(X)=P_{M'_\mu,a}(X)$ for all $a\in \bsA$.)
We first claim that $m_\mu \equiv m'_\mu \pmod p$ holds for any $\mu \in H$.
To see this, we fix an arbitrary $\eta \in H$.
Applying Corollary \ref{cor.3} to $\bigoplus_{\mu \in H}N_\mu$ and the sequence $(a_{\mu})_{\mu\in Hz}$ in $\bsA$ with $a_\eta=1$ and $a_\mu=0$ for $\mu\neq \eta$, we can take an element $c \in \bsA$ such that $[c]_{N_\eta}=1$ and $[c]_{N_\mu}=0$ for $\mu \neq \eta$.
Thus 
for any integer $0\leq k\leq r$, we have $\Tr_{\bigwedge^kM}(c)=\Tr_{\bigwedge^k(M_\eta)^{\op m_\eta}}(1)$ and  $\Tr_{\bigwedge^kM'}(c)=\Tr_{\bigwedge^k(M'_\eta)^{\op m'_\eta}}(1)$.
Hence
\[
\Tr_{\bigwedge^k(M_\eta)^{\op m_\eta}}(1)=\Tr_{\bigwedge^k(M'_\eta)^{\op m'_\eta}}(1)
\]
 by assumption.
 Since particularly $\Tr_{(M_\eta)^{\op m_\eta}}(1)=\Tr_{(M'_\eta)^{\op m'_\eta}}(1)$, we have 
 \[
 (m_\eta-m_\eta')\dim_FN_\eta=0 \in F.
 \]
Hence if $\dim_FN_\eta$ is not divisible by $p$, then $m_\eta \equiv m'_\eta \pmod p$.
 On the other hand, suppose that $\dim_FN_\eta$ decomposes as $\dim_FN_\eta=p^\nu d$, where $\nu>0$ and $d$ is not divisible by $p$.
Then of course $p^\nu d \leq r$ and the binomial coefficient $\binom{p^\nu d}{p^\nu}$ is coprime to $p$.
For any integer $m\geq 0$, we see that 
\[
\dim_F\left(\bigwedge^{p^\nu}N_\eta^{\op m}\right)= \dim_F\left(
\bigoplus_{s_1+\cdots+s_m=p^\nu} \left(\bigotimes_{i=1}^m \bigwedge^{s_i}N_\eta  \right)\right)
=\sum_{s_1+\cdots+s_m=p^\nu}\prod_{i=1}^m \binom{p^\nu d}{s_i},
\]
and that $\binom{p^\nu d}{s_i}$ is not divisible by $p$ if and only if $s_i=p^\nu$.
Hence we obtain
\[
\Tr_{\bigwedge^{p^\nu}N_\eta^{\op m}}(1)=\dim_F\left(\bigwedge^{p^\nu}N_\eta^{\op m}\right)=m\binom{p^\nu d}{p^\nu}\ \in F
\]
because $F$ has characteristic $p$.
Hence $\Tr_{\bigwedge^{p^\nu}(M_\eta)^{\op m_\eta}}(1)=\Tr_{\bigwedge^{p^\nu}(M'_\eta)^{\op m'_\eta}}(1)$
implies $(m_\eta -m_\eta')\binom{p^\nu d}{p^\nu}=0\in F$.
 Since $\binom{p^\nu d}{p^\nu} \in F\mal$, we get $m_\eta\equiv m'_\eta \pmod p$.
 
Consequently, there exist semi-simple $\bsA$-modules $L_1, L_1', M_1,$ and $M_1'$ such that 
\[
M=L_1\oplus (M_1)^{\oplus p},\es M'=L'_1\oplus (M'_1)^{\oplus p}, \es \mbox{and}\es L_1 \cong L_1'.
\]
In particular, $\dim_FM_1=\dim_FM_1'$.
Now we have $P_{(M_1)^{\oplus p},a}(X)=(P_{M_1,a}(X))^p$ and $P_{(M'_1)^{\oplus p},a}(X)=(P_{M'_1,a}(X))^p$ for any $a\in\bsA$.
We see that the assumption 
$P_{M,a}(X)=P_{M',a}(X)$ implies $(P_{M_1,a}(X))^p=(P_{M'_1,a}(X))^p$.
Hence $P_{M_1,a}(X)=P_{M_1',a}(X)$.
Repeating the same argument, we can take semi-simple $\bsA$-modules satisfying
\[
M=L_h\oplus (M_h)^{\oplus p^h},\es M'=L'_h\oplus (M'_h)^{\oplus p^h}, \es \mbox{and}\es L_h \cong L_h'
\] 
for any $h$, but  $M$ and $M'$ are finite-dimensional and so  $M_h=\{0\}$ and $M_h'=\{0\}$ for some $h$.
Hence we have $M \cong M'$.

(2) Suppose that $\Tr_{M}(a)=\Tr_{M'}(a)$ for all $a\in \bsA$.
As with the argument in the proof of (1), we have $(m_\eta-m'_\eta)\Tr_{N_\eta}(1)=0$ in $F$ for any $\eta \in H$. 
If $F$ has characteristic $0$, then $m_\eta=m'_\eta$ and so $M \cong M'$.
If $F$ has characteristic $p$ with $r<p$, then we have already seen that  $m_\eta \equiv m'_\eta \pmod p$ for any $\eta\in H$.
On the other hand, we have $|m_\eta-m'_\eta|\leq r<p$.
Hence $m_\eta=m'_\eta$.
\end{proof}

Finally, applying Proposition \ref{prop.BN} to $\bsA=F[G]$, $M=V^\ss$, and $M'=(V')^\ss$, we get Theorem \ref{thm.BN}.

%

\vspace{50pt}

Department of Architecture, Faculty of Science and Engineering,

Toyo University

2100, Kujirai, Kawagoe, 
Saitama 350-8585, Japan

{\it  E-mail address} : \email{\tt okumura165@toyo.jp}

{\it URL} : {\tt \url{https://sites.google.com/view/y-okumura/index-e?authuser=0}}

\end{document}